\documentclass[a4paper,10pt]{article}
\usepackage{graphicx} 
\usepackage[utf8]{inputenc}
\usepackage{float}
\usepackage{amsthm}
\usepackage{amssymb}
\usepackage{amsfonts}
\usepackage{amsmath}
\usepackage{todonotes}
\usepackage{xspace}
\usepackage[ruled,vlined]{algorithm2e}
\usepackage{enumerate}
\usepackage[numbers,sort]{natbib}
\usepackage{hyperref}
\usepackage[a4paper,top=1.5cm,bottom=1.5cm,left=1.5cm,right=3.5cm,marginparwidth=3cm]{geometry}
\usepackage{booktabs}
\usepackage{authblk}

\newcommand{\cuttable}{cuttable\xspace}

\newtheorem{observation}{Observation}
\newtheorem{lemma}[observation]{Lemma}
\newtheorem{proposition}[observation]{Proposition}
\newtheorem{theorem}[observation]{Theorem}
\newtheorem{corollary}[observation]{Corollary}
\theoremstyle{definition}
\newtheorem{definition}{Definition}
\newtheorem{rrule}{Reduction Rule}

\newcommand{\emb}{\phi} 

\title{A Class of Unrooted Phylogenetic Networks Inspired by the Properties of Rooted Tree-Child Networks}
\author[1]{Leo van Iersel}
\author[2]{Mark Jones}
\author[3]{Simone Linz}
\author[4]{Norbert Zeh}

\affil[1]{Delft Institute of Applied Mathematics, Delft University of Technology, Delft, The Netherlands}
\affil[2]{Department of Computer Science, Middlesex University, London, United Kingdom}
\affil[3]{School of Computer Science, University of Auckland, Auckland, New Zealand}
\affil[4]{Faculty of Computer Science, Dalhousie University, Halifax, Canada}

\begin{document}

\maketitle

\begin{abstract}
A directed phylogenetic network is \emph{tree-child} if every non-leaf vertex has a child that is not a reticulation.
As a class of directed phylogenetic networks, tree-child networks are very useful from a 
computational
perspective. For example, 
several computationally difficult problems in phylogenetics become tractable when restricted to tree-child networks. At the same time, the class itself is rich enough to contain quite complex networks.
Furthermore, checking whether a directed network is tree-child can be done in polynomial time. In this paper, we seek a class of \emph{undirected} phylogenetic networks that is rich and computationally useful in a similar way to the class tree-child \emph{directed} networks. 
A natural class to consider for this role is the class of tree-child-orientable networks which contains all  those undirected phylogenetic networks whose edges can be oriented to create a tree-child network.
However, we show here that recognizing such networks is NP-hard, even for binary networks, and as such this class is inappropriate for this role.
Towards finding a class of undirected networks that fills a similar role to directed tree-child networks, we propose new classes called $q$-cuttable networks, for any integer $q\geq 1$. We show that these classes have many of the desirable properties, similar to tree-child networks in the rooted case, including being recognizable in polynomial time, for all~$q\geq 1$. Towards showing the computational usefulness of the class, we show that the NP-hard problem \textsc{Tree Containment} is polynomial-time solvable when restricted to $q$-cuttable networks with~$q\geq 3$.
\end{abstract}

\section{Introduction}

Phylogenetic networks are graphs representing evolutionary relationships between a set of taxa (different entities that are expected to be the result of evolutionary processes, for example biological species). These taxa are usually represented by the degree-1 vertices or \emph{leaves} of a phylogenetic network. Three variants of phylogenetic networks have been introduced that are useful in different scenarios. Rooted phylogenetic networks are directed acyclic graphs with a single in-degree-0 vertex or \emph{root} and are the most detailed description of an evolutionary scenario. Since the direction of evolution is not always recoverable, unrooted phylogenetic networks, which are undirected graphs, are also being studied. Finally, semi-directed phylogenetic networks are a mixture of the two, by having both undirected as well as directed edges. In this paper, we consider rooted and unrooted phylogenetic networks.

The best studied class of rooted phylogenetic networks is undoubtedly the class of rooted phylogenetic trees. These are rooted phylogenetic networks in which each non-root vertex has in-degree~$1$. Although trees are useful for representing divergence events, they cannot represent convergence events, i.e. events in which lineages merge for example due to hybridization. Rooted phylogenetic networks can display such events using vertices with in-degree at least $2$, which are called \emph{reticulations}, an umbrella term for all non-treelike evolutionary processes such as hybridization, recombination or gene transfer.

For many tasks, the full class of all rooted phylogenetic networks is much too large and leads to highly intractable problems and unrealistic scenarios. Hence, subclasses are being studied that are larger than the class of trees but smaller than the full class. In particular, a well-studied class of rooted phylogenetic networks is the class of tree-child networks~\cite{cardona2008comparison}. These are networks in which each non-leaf vertex has at least one child that is not a reticulation. 

Rooted tree-child phylogenetic networks are attractive for a variety of reasons. Tree-child scenarios are realistic in certain biological settings~\cite{kong2022classes}. There are also several positive encoding~\cite{cardona2008comparison,cardona2009nakhleh,van2014trinets,bordewich2018constructing,murakami2019reconstructing,bordewich2018recovering}, identifiability~\cite{francis2018identifiability,allman2025beyond}, counting~\cite{mcdiarmid2015counting,cardona2019generation,cardona2020counting,fuchs2021asymptotic,semple2025sharp}, characterization~\cite{semple2016phylogenetic,linz2019attaching,dackcker2024characterising}, geometric~\cite{moulton2025spaces} and algorithmic results~\cite{van2010locating,bordewich2016algorithm,chan2019reconciliation,cardona2024generation,janssen2020linear,bienvenu2022combinatorial,van2022practical,frohn20252} for this class and its subclasses. For example, the problem of deciding whether a rooted phylogenetic network contains a given rooted phylogenetic tree, which is called {\sc Rooted Tree Containment}, is NP-complete in general and polynomial-time solvable if the network is
tree-child~\cite{kanj2008seeing,van2010locating,gambette2018solving,janssen2021cherry}. Moreover, rooted tree-child networks are a rich class of phylogenetic networks in the sense that any arbitrarily complex phylogenetic network can be made tree-child by attaching leaves. Hence, the main power of rooted tree-child networks lies in the fact that they make many problems computationally feasible without restricting the considered networks too much.

\begin{figure}[t]
    \centering
\scalebox{1.2}{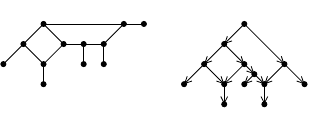}
    \caption{An unrooted binary phylogenetic network $U$ on five leaves $\{a,b,c,d,e\}$ that is $2$-cuttable but not $3$-cuttable (left) and a rooted binary phylogenetic network $N$  that is a tree-child orientation of $U$ (right). Observe that $N$ can be obtained from $U$ by subdividing the edge $\{u,v\}$ with a new vertex $\rho$ and directing each edge in the resulting graph.} 
    \label{fig:intro}
\end{figure}

Even though evolution is a process with a clear direction forward in time, unrooted phylogenetic networks are being studied because the direction of evolution is not always easily recoverable from data~\cite{morrison2005networks,van2018unrooted,francis2018tree,gambette2017uprooted,gambette2012quartets}. A natural way to extend classes of rooted networks to the unrooted case is to consider orientability. In particular, one can define an unrooted phylogenetic network to be tree-child if it is possible to choose an orientation for its edges such that it becomes a rooted tree-child network (see Figure~\ref{fig:intro} for an example). The computational problem of deciding whether this is possible for a given unrooted phylogenetic network is called the \textsc{Tree-Child Orientation}, the complexity of which has been open for some time~\cite{huber2024orienting,dempsey2024wild,docker2025existence,urata2024orientability,garvardt2023finding}. It was shown recently to be NP-hard for unrooted non-binary phylogenetic networks~\cite{docker2025existence}. In the first half of the paper, we prove that the problem remains NP-hard even for unrooted binary phylogenetic networks. As a consequence, it follows that
tree-child-orientable networks
do not form an attractive class of unrooted phylogenetic networks for most purposes, since one cannot even decide whether a given networks belongs to the class in polynomial time, unless P$=$NP. Therefore, it is of interest to search for new classes of unrooted phylogenetic networks which are recognizable in polynomial time and fulfill many of the desirable properties that rooted tree-child networks have.

In the second half of the paper, we propose the classes of 
\emph{$q$-cuttable} networks for any integer $q \geq 1$.
An unrooted phylogenetic network is $q$-\cuttable if every cycle contains a path of at least $q$ vertices
such that each vertex on that
path is incident to a cut-edge (an edge whose removal disconnects the network). An example of an unrooted phylogenetic network that is $2$-cuttable but not $3$-cuttable is illustrated in Figure~\ref{fig:intro}. The name $q$-\cuttable comes from an equivalent definition (see Observation~\ref{ob:q-cut}) that basically says that a network is $q$-\cuttable precisely if cutting each chain of at least~$q$ cut-edges  results in a forest. We show that these classes enjoy a number of desirable properties. First, we show that, for any integer~$q\geq 1$, it can be decided in polynomial time whether a given unrooted phylogenetic network is $q$-\cuttable. Second, we prove that $2$-\cuttable networks form a subclass of tree-child-orientable networks (and hence of so-called ``orchard'' networks~\cite{dempsey2024wild}). Third, we show that the unrooted counterpart of the problem \textsc{Rooted Tree Containment} is, although NP-hard in general~\cite{van2018unrooted}, polynomial-time solvable for any $q$-cuttable network with $q\geq 3$.

\section{Preliminaries}

This section introduces notation and terminology that is used in the remainder of the paper. Throughout the paper, $X$ denotes a non-empty finite set. In what follows, we consider directed and undirected graphs whose leaves are bijectively labeled with the elements in $X$. For these graphs, we do not distinguish between a leaf and its label. More precisely, for each $x\in X$, we use $x$ to refer to the leaf that is labeled with $x$ and to the label itself. Let $G$ and $G'$ be directed graphs, and let $e=(u,w)$ be an arc of $G$. The operation of replacing $e$ with the two arcs $(u,v)$ and $(v,w)$, where $v$ is a new vertex, is referred to as {\it subdividing $e$}. Conversely, given a degree-2 vertex $v$ of $G$ such that $(u,v)$ and $(v,w)$ are arcs, then {\it suppressing $v$} is the operation of deleting $v$ and adding a new arc $(u,w)$. Furthermore, we say that $G'$ is a {\it subdivision} of $G$ if $G$ can be obtained from $G'$ by suppressing all degree-2 vertices. We note that the last three definitions naturally carry over to undirected graphs. Lastly, for an undirected graph,  $(v_1,v_2,\ldots,v_k)$ denotes the path $P=\{v_1,v_2\},\{v_2,v_3\},\ldots,\{v_{k-1},v_k\}$  with $k\geq 1$. If $k=1$, then $P$ contains a single vertex and no edge.

\paragraph{Phylogenetic networks.}  An \emph{unrooted binary phylogenetic network} $U$ on $X$ is a simple and connected undirected graph whose internal vertices all have degree 3 and whose degree-1 vertices, referred to as {\it leaves}, are bijectively labeled with the elements in $X$. Let $e=\{u,v\}$ be an edge of $U$. Then $e$ is a {\it cut-edge} of $U$ if its deletion disconnects $U$ into two connected components. We refer to $e$ as a {\it trivial} cut-edge if $e$ is a cut-edge, and $u$ or $v$ is a leaf of $U$. Furthermore, a {\it blob} of $U$ is a maximal subgraph of $U$ that has no cut-edge and that is not a single vertex. Lastly, we refer to $r(U)=|E|-(|V|-1)$ as the {\it reticulation number} of $U$, where $E$ and $V$ is the edge and vertex set of $U$, respectively. If $r(U)=0$, then $U$ is called an {\it unrooted binary phylogenetic $X$-tree}. Now, let $U$ be an unrooted  binary phylogenetic network on $X$, and let $T$ be an unrooted binary phylogenetic $X$-tree. We say that $U$ {\it displays} $T$ if there exists a subgraph of $U$ that is a subdivision
of $T$. Lastly, we call a subtree of $T$ {\it pendant} if it can be detached from $T$ by deleting a single edge.

Turning to digraphs, a \emph{rooted binary phylogenetic network} $N$ on $X$ is a directed acyclic graph with no parallel arcs that satisfies the following properties:
\begin{enumerate}[(i)]
\item there is a unique vertex $\rho$, the \emph{root}, with in-degree 0 and out-degree $2$,
\item each {\it leaf} has in-degree 1 and out-degree 0, and the set of all {\it leaves} are bijectively labeled with the elements in $X$, and
\item all other vertices have either in-degree 1 and out-degree 2, or in-degree  2 and out-degree 1.
\end{enumerate}
Let $N$ be a rooted binary phylogenetic network on $X$, let $(u,v)$ be an arc of~$N$, and let $w$ be a vertex of $N$. We say that $u$ is a {\it parent} of $v$ and $v$ is a {\it child} of $u$. Furthermore, if $w$ has in-degree 1 and out-degree~2, then $w$ is called a {\it tree vertex}, and if $w$ has in-degree 2 and out-degree 1, then $w$ is called a {\it reticulation}. The {\it reticulation number} $r(N)$ of $N$ is equal to the number of reticulations in $N$ and, if $r(N)=0$, then $N$ is called a rooted binary phylogenetic $X$-tree. 

Since all phylogenetic networks throughout this paper are binary, we simply refer to an unrooted binary phylogenetic network as an {\it unrooted phylogenetic network} and to a rooted binary phylogenetic network as a {\it rooted phylogenetic network}.

\paragraph{Cherry-picking sequences.} Let $U$ be an unrooted phylogenetic network on $X$, and let $x$ and $y$ be two distinct elements in $X$.
If $|X|=2$, or $x$ and $y$ are adjacent to a common vertex, we refer to $\{x,y\}$ as a {\it cherry} of $U$. Moreover, if there exist distinct interior vertices $u$ and $v$ in $U$ such that $\{x,u\}$, $\{u,v\}$, and $\{v,y\}$ are edges in $U$ and $\{u,v\}$ is an edge of a cycle, then $\{x,y\}$ is a {\it reticulated cherry} of $U$ in which case we refer to $\{u,v\}$ as the {\it central edge} of $\{x,y\}$. We next define two operations that reduce cherries and reticulated cherries of $U$. 
Let~$(x,y)$ be an ordered pair of leaves such that $\{x,y\}$ is a cherry or a reticulated cherry in $U$.
Then  {\it reducing $(x,y)$} is the operation that modifies $U$ in one of the following two ways:
\begin{enumerate}[(i)]
\item If $\{x,y\}$ is a cherry in $U$, then delete~$x$ and, if $|X|\geq 3$, then also suppress the resulting degree-2 vertex. 
\item If $\{x,y\}$ is a reticulated cherry in $U$, then delete the central edge of $\{x,y\}$ and  suppress the two resulting degree-2 vertices. \end{enumerate}
We refer to (i) as  {\it  reducing a cherry} and to (ii) as a {\it reducing a reticulated cherry}. 
Now, let
\[ 
\sigma=((x_1,y_1), (x_2,y_2),\ldots,(x_s,y_s))
\]
be a sequence of ordered pairs in $X\times X$.  Setting $U_0=U$ and, for all $i\in \{1, 2, \ldots, s\}$, setting $U_i$ to be the unrooted phylogenetic network obtained from $U_{i-1}$ by reducing $(x_i, y_i)$ if $\{x_i,y_i\}$ is a cherry or reticulated cherry in $U_{i-1}$, we call $\sigma$ a {\it cherry-picking sequence for $U$} if $U_s$ consists of a single vertex.

\paragraph{Classes of phylogenetic networks and their orientations.} Let $N$ be a rooted phylogenetic network. We say that $N$ is {\it tree-child} if every non-leaf vertex of $N$ has a child that is a tree vertex or a leaf.  Moreover, we say that $N$ contains a {\it stack} if there exist two reticulations that are joined by an arc and that $N$ contains a pair of {\it sibling reticulations} if there exist two reticulations that have a common parent. The following well-known equivalence follows from the definition of a tree-child network.

\begin{lemma}\label{l:tc}
Let $N$ be a rooted phylogenetic network. Then $N$ is tree-child if and only if it has no stack and no pair of sibling reticulations
\end{lemma}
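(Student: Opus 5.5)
We prove both directions directly from the definition of tree-child and the degree constraints of a rooted binary phylogenetic network, using the contrapositive in each direction.

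For the forward direction, suppose $N$ has a stack, that is, an arc $(u,v)$ with both $u$ and $v$ reticulations. Since $u$ is a reticulation, it has out-degree~$1$, so $v$ is its only child. Then $u$ is a non-leaf vertex all of whose children are reticulations, and $N$ is not tree-child.

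Similarly, suppose $N$ has a pair of sibling reticulations $v_1\neq v_2$ with a common parent $u$. Since $N$ has no parallel arcs and $u$ has at least two distinct children, $u$ is not a reticulation and not a leaf. Hence $u$ is a tree vertex or the root, and so has out-degree exactly~$2$. Its children are therefore exactly $v_1$ and $v_2$, both reticulations, and again $N$ is not tree-child.

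For the converse, suppose $N$ is not tree-child. Then some non-leaf vertex $u$ has only reticulations as children. We split by the type of $u$, which is either the root, a tree vertex, or a reticulation.
\begin{itemize}
\item If $u$ is a reticulation, it has exactly one child $v$, which is a reticulation by assumption. The arc $(u,v)$ is then a stack.
\item If $u$ is the root or a tree vertex, it has out-degree~$2$. Since there are no parallel arcs, its two children are distinct reticulations with common parent $u$, so they form a pair of sibling reticulations.
\end{itemize}
In either case $N$ contains a stack or a pair of sibling reticulations, which completes the proof.

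The argument is purely local. The only point needing care is that "no parallel arcs" guarantees that the two children of an out-degree-$2$ vertex are distinct, so that they genuinely form a pair of sibling reticulations.
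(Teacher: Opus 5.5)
Your proof is correct and is the routine local check from the definitions that the paper has in mind; the paper gives no argument beyond remarking that the equivalence follows from the definition of tree-child. You also correctly identify the one point that needs saying, namely that the absence of parallel arcs makes the two reticulation children of an out-degree-$2$ vertex a genuine pair of sibling reticulations (citing it in the forward sibling case is harmless but unnecessary, since $v_1\neq v_2$ already gives two distinct arcs).
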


Let $U$ be an unrooted phylogenetic network. An {\it orientation} of $U$ is obtained from $U$ by subdividing  an edge of $U$ with a new vertex $\rho$ and then assigning a direction to each edge of the resulting graph.  It was observed by Janssen et al.~\cite{janssen2018exploring} that not every unrooted phylogenetic network has an orientation that is a rooted phylogenetic network.~On the positive side, it can be decided in polynomial-time if an unrooted phylogenetic network can be oriented as a rooted phylogenetic network~\cite{bulteau2023turning,janssen2018exploring}. Turning to classes of unrooted phylogenetic networks, we say that $U$ is {\it simple} if each cut-edge of $U$ is incident to a leaf. Moreover, $U$ is referred to as an {\it orchard network} if it has a cherry-picking sequence. 
We note that an unrooted phylogenetic network is an orchard network precisely if it has an orientation that is a rooted orchard network (see~\cite[Theorem 2.1]{dempsey2024wild} for a proof and for the definition of rooted orchard networks). Next, if an orientation of $U$ is  a tree-child network with root $\rho$, we say that $U$ has a {\it tree-child orientation}. This motivates the following definition. We say that $U$ is {\it tree-child} if it has a tree-child orientation. 
However, as we will see in Section~\ref{sec:tc-orientation}, deciding if $U$ has a tree-child orientation is an NP-complete problem. Given this somewhat negative result, we introduce the following new classes of unrooted phylogenetic networks and show in Sections~\ref{sec:props} and~\ref{sec:tree-containment} that this class has several favorable properties that are similar to tree-child networks in the rooted setting.

\begin{definition}\label{def:q-cuttable}
Let~$U$ be an unrooted phylogenetic network and $q\geq 1$ be an integer. Then~$U$ is called \emph{$q$-\cuttable} if each cycle of~$U$ contains a path of at least~$q$ vertices such that each vertex on that path is incident to a cut-edge. 
\end{definition}

\section{Tree-child orientations}\label{sec:tc-orientation}

In this section, we show that the following decision problem is NP-complete. 

\begin{center}
\noindent\fbox{\parbox{.95\textwidth}{
\noindent\textsc{Tree-Child Orientation}\\
\textbf{Instance.} An unrooted binary phylogenetic network $U$.\\
\textbf{Question.} Does $U$ have a tree-child orientation?
}}
\end{center}

\begin{figure}[t]
    \centering
\scalebox{1.2}{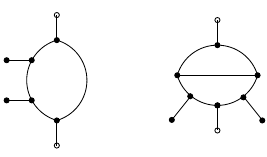}
    \caption{The connection gadget (left) and the reticulation gadget (right).}
    \label{fig:gadgets-unrooted}
\end{figure}

We start by introducing two gadgets that we call the  {\it connection gadget} $C$ and the {\it reticulation gadget}~$R$, and that are shown in Figure~\ref{fig:gadgets-unrooted}. We refer to the vertex $s$ (resp. $t$) of such a gadget as its {\it \mbox{$s$-terminal}} (resp. {\it $t$-terminal}).
As we will see later, several copies of $C$ and $R$ are combined to construct an unrooted phylogenetic network $U$ with subgraphs being isomorphic to $C$ or $R$. Leaves of $C$ and $R$ that are internal vertices of $U$ are indicated by small open circles in Figures~\ref{fig:gadgets-unrooted}--\ref{fig:reticulation-gadget-rooted}. Hence, the only leaves of $C$ and $R$ that are leaves in $U$ are labeled $\ell$ and $\ell'$. Other vertex labels are simply used to refer to vertices and should not be regarded as genuine labels. 
We note that the connection gadget and proof of the next lemma first appeared in preprint~\cite{docker2024existence-arxiv}. Since they are not included in the published version~\cite{docker2025existence}, we provide full details here for reasons of completeness.

\begin{lemma}\label{l:connection}
Let $U$ be an unrooted phylogenetic network on $X$, and let $G$ be a subgraph of $U$ that is isomorphic to the connection gadget. Suppose that $U$ has a tree-child orientation $N$ such that the root of $N$ does not subdivide an edge of $G$. Then $N$ satisfies one of the following two properties:
\begin{enumerate}[{\rm (i)}]
\item $(s,u)$ and $(v,t)$ are arcs in $N$, and $u$ is not a reticulation, or
\item $(u,s)$ and $(t,v)$ are arcs in $N$, and $v$ is not a reticulation. 
\end{enumerate}
\end{lemma}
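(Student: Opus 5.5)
The plan is a local case analysis inside the gadget. Since the root $\rho$ of $N$ does not subdivide an edge of $G$, every vertex of $G$ that is internal in $U$ keeps its degree~$3$ in $N$. Each such vertex is therefore either a tree vertex (in-degree~$1$, out-degree~$2$) or a reticulation (in-degree~$2$, out-degree~$1$). The leaves $\ell,\ell'$ of $G$ are leaves of $U$, so their incident arcs point towards them.

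The tools I will use are:
\begin{enumerate}[(a)]
\item acyclicity of $N$;
\item the in/out-degree constraints at each vertex;
\item Lemma~\ref{l:tc}: $N$ has no stack and no pair of sibling reticulations;
\item the fact that $N$ has a unique source $\rho$ lying outside $G$.
\end{enumerate}
From (d), every vertex of $G$ is reachable from $\rho$. So any directed path entering the interior of $G$ must come in through the attachment vertices of $G$: the terminals $s,t$ and the open-circle vertices.

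I would organise the argument around the orientation of the edge $\{s,u\}$, splitting into the cases $(s,u)\in N$ and $(u,s)\in N$. The gadget is symmetric under exchanging $s\leftrightarrow t$ and $u\leftrightarrow v$, which turns (i) into (ii). So it suffices to prove: if $(s,u)$ is an arc, then $(v,t)$ is an arc and $u$ is not a reticulation. The second case then follows by applying the first to the mirrored labelling.

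Assume $(s,u)\in N$. First I would show that $u$ is a tree vertex. If $u$ were a reticulation, its second in-neighbour inside $G$ and its unique out-neighbour would be forced by the degree constraints. I would follow the forced orientations along the gadget's internal cycle(s) until reaching either a directed cycle (contradicting acyclicity) or two adjacent reticulations or two reticulations with a common parent (contradicting Lemma~\ref{l:tc}). Leaves and pendant structures are useful here: an arc into a leaf forces its neighbour to have that leaf as an out-neighbour, which restricts that neighbour's other arcs.

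Next, with $u$ a tree vertex and flow entering $G$ at $s$, I would suppose for contradiction that $(t,v)\in N$, so that flow also enters at $t$. Then the interior of the gadget has sources of flow at both ends. Counting in- and out-degrees over the interior vertices, the reticulations are confined to the gadget's cycle structure. The gadget's cycles must then contain enough reticulations that two of them become adjacent or siblings, which Lemma~\ref{l:tc} forbids. This forces $(v,t)\in N$.

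The main obstacle is the precise propagation of forced orientations through the gadget's internal cycles. This requires checking, for each consistent choice at the first branching vertex, that some violation of Lemma~\ref{l:tc} or acyclicity arises. I would first try to pin down where the reticulation of each cycle of $G$ can lie, using the principle that each cycle in an acyclic orientation with a single source outside it has a vertex of in-degree~$2$ within the cycle. I would then show that only the placements compatible with (i) avoid stacks and sibling reticulations.
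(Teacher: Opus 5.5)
\textbf{The gap: the symmetry reduction misses a case.} The automorphism $s\leftrightarrow t$, $u\leftrightarrow v$ (with $w\leftrightarrow w'$, $\ell\leftrightarrow\ell'$) sends the hypothesis ``$(s,u)$ is an arc'' to ``$(t,v)$ is an arc'', not to ``$(u,s)$ is an arc''. So your claim together with its mirror image only covers orientations in which $\{s,u\}$ points into the gadget or $\{t,v\}$ points into the gadget. In fact the mirrored implication ``$(t,v)\Rightarrow(u,s)$'' is just the contrapositive of ``$(s,u)\Rightarrow(v,t)$''; it adds nothing about the terminal edges, only the statement that $v$ is not a reticulation. The orientation with both $(u,s)$ and $(v,t)$ as arcs, i.e.\ both terminal edges directed \emph{out} of the gadget, is therefore never excluded by your argument. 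It needs its own treatment, and this is precisely the first case of the paper's proof. There, $(u,v)$ is assumed by symmetry, and the degree constraints then force $(w,u)$, $(w',w)$ and $(v,w')$, giving a directed $4$-cycle. Your tool (d) also settles it at once. The gadget is the $4$-cycle $(u,v,w',w)$ with $s,t,\ell,\ell'$ pendant at $u,v,w,w'$ respectively, and $\ell,\ell'$ are leaves. Hence in this case no arc enters $\{u,v,w,w'\}$, so these vertices are unreachable from $\rho$.

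\textbf{The core argument is still only a plan, but your counting idea completes it.} Everything substantive is deferred to ``propagating forced orientations''. Your in-degree counting does, however, finish the proof once made concrete, and more uniformly than the paper's case analysis. Each of $u,v,w,w'$ has in-degree $1$ or $2$. The four cycle edges contribute exactly $4$ to their total in-degree, and the leaf edges contribute nothing. So the number of reticulations among them equals the number $k$ of terminal edges directed into the gadget.
\begin{itemize}
\item If $k=2$, the two reticulations on the $4$-cycle are either adjacent, which is a stack, or opposite. In the opposite case, say $u$ and $w'$, the vertex $w'$ has both $v$ and $w$ as parents, and $u$'s second parent is one of them; that gives sibling reticulations. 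The case $v$ and $w$ is the same.
\item If $k=0$, or if $k=1$ with $(s,u)$ entering and $u$ the reticulation, then each of $u,v,w,w'$ receives exactly one arc from the cycle. The $4$-cycle is then directed, a contradiction.
\end{itemize}
Note that the $k=0$ case is exactly the one your reduction omitted. With it added, the argument is complete.
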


\begin{proof}
First, suppose that $(u, s)$ and $(v, t)$ are arcs in $N$. 
That is, both arcs are directed out of the gadget.
As $\ell$ and $\ell'$ are leaves, $(w,\ell)$ and $(w',\ell')$ are arcs in $N$.
By symmetry, we may assume that $(u,v)$ is an arc in $N$. In turn, this implies that the arc $(w,u)$ exists because, otherwise, $u$ has in-degree 0. Using the same argument again, $(w',w)$ and $(v,w')$ are also  arcs in $N$. Now $(u,v)$, $(v,w')$, $(w',w)$, and $(w,u)$ are the arcs of a directed cycle in $N$, thereby contradicting that $N$ is acyclic. 

Second, suppose that $(s, u)$ and $(t, v)$ are arcs in $N$. 
That is, both arcs are directed into the gadget.
By symmetry, we may again assume that $(u,v)$ is an arc in $N$. Then, $(v,w')$ is an arc in $N$ because, otherwise $v$ has out-degree 0. Since $N$ is tree-child and 
$v$ is a reticulation,
$w'$ is a tree vertex, which implies that $(w',w)$ is an arc in $N$.
Then $(u,w)$ cannot be an arc, as this would imply that both children of $u$ are reticulations.
Hence $(w',w)$ and $(w,u)$ are arcs in $N$. Now $(u,v),(v,w'),(w',w)$ and $(w,u)$ are again the arcs of a directed cycle in $N$, another contradiction. By combining both cases, we deduce that either $(s,u)$ and $(v,t)$, or $(u,s)$ and $(t,v)$ are arcs in $N$. 

Now suppose that $(s,u)$ and $(v,t)$ are arcs in $N$ and that $u$ is a reticulation. If $(u,v)$ is an arc in $N$, then $(w,u)$ is also an arc in $N$. 
Moreover, as $N$ is tree-child, $v$ 
is a tree vertex.
Using the fact that every non-leaf vertex has a non-reticulation child and no vertex has out-degree $3$,
it is now straightforward to check
that $(u,v),(v,w'),(w',w)$ and $(w,u)$ are the arcs of a directed cycle in $N$, a contradiction. On the other hand, if $(v,u)$ is an arc in $N$, then $(u,w)$ is also an arc in $N$. As $w$ 
is a tree vertex
in $N$, it now follows that $(v,u)$, $(u,w)$, $(w,w')$, and $(w',v)$ are arcs of a directed cycle in $N$, another contradiction. Thus, if  $(s,u)$ and $(v,t)$ are arcs in $N$, then $u$ is not a reticulation.

We complete the proof by noting that an argument analogous to that in the previous paragraph can be used to show that if $(u,s)$ and $(t,v)$ are arcs in $N$, then  $v$ is not a reticulation. 
\end{proof}

\noindent Following on from the last lemma, Figure~\ref{fig:connection-gadget-rooted} shows two possibilities to direct the edges of the connection gadget if it is a subgraph of a tree-child network.

\begin{figure}[t]
    \centering
\scalebox{1.2}{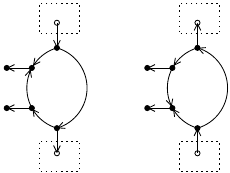}
    \caption{A rooted phylogenetic network $N$ that has the connection gadget as a subgraph. If $N$ is tree-child, then the edges of the connection gadget can, for example, be directed in one of the two ways shown. Dotted rectangles indicate omitted parts of $N$.}
    \label{fig:connection-gadget-rooted}
\end{figure}

\begin{figure}[t]
    \centering
\scalebox{1.2}{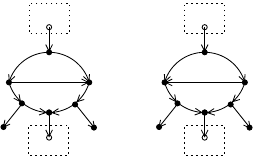}
    \caption{A rooted phylogenetic network $N$ that has the reticulation gadget as a subgraph. If $N$ is tree-child, the edges of the reticulation gadget are directed in exactly one of the two ways shown. Dotted rectangles indicate omitted parts of $N$.}
    \label{fig:reticulation-gadget-rooted}
\end{figure}

\begin{lemma}\label{l:reticulation}
Let $U$ be an unrooted phylogenetic network on $X$, and let $G$ be a subgraph of $U$ that is isomorphic to the reticulation gadget. Suppose that $U$ has a tree-child orientation $N$ such that the root of $N$ does not subdivide an edge of $G$. Then $(w,r)$, $(w',r)$, $(r,t)$, and $(s,u)$ are arcs in $N$.
\end{lemma}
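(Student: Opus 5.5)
The plan is to argue exactly as in the proof of Lemma~\ref{l:connection}. That is a local case analysis on the directions of the few edges of the gadget $G$, using only three facts. First, $N$ is acyclic. Second, every non-root vertex of $N$ is either a tree vertex (in-degree $1$, out-degree $2$) or a reticulation (in-degree $2$, out-degree $1$); since the root does not subdivide an edge of $G$, no vertex of $G$ is a source and no internal vertex has out-degree $3$ or in-degree $3$. Third, $N$ has no stack and no pair of sibling reticulations (Lemma~\ref{l:tc}). As in the connection gadget, the pendant leaves $\ell$ and $\ell'$ force the arcs into them, so the vertices carrying $\ell,\ell'$ each have one further out-arc and one in-arc. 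I am assuming from Figure~\ref{fig:gadgets-unrooted} that these vertices are $w$ and $w'$, and I take the neighbours of $r$ to be $w$, $w'$ and $t$; the argument below depends on reading this adjacency off the figure.

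The first and central step is to orient the three edges at $r$. I will split on the direction of $\{r,t\}$.

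Suppose first that $(t,r)$ is an arc. Then $r$ has at least one further in-arc from $w$ or $w'$, or it is a tree vertex with children $w$ and $w'$. In each subcase I will propagate the forced directions around the cycle(s) of the gadget through $w$, $w'$ and $u$, using the forced leaf arcs and the degree constraints. The goal in each subcase is to reach either a directed cycle or a vertex both of whose children are reticulations, or two adjacent reticulations.

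Now suppose $(r,t)$ is an arc. Then $r$ is either a reticulation with parents $w$ and $w'$, which is the desired conclusion, or a tree vertex with a single parent in $\{w,w'\}$ and the other one as a child. In the latter case, say $(w,r)$ and $(r,w')$, the vertex $w'$ has in-arc from $r$ and out-arc to $\ell'$. So its third edge is forced, and following it around the gadget should again close a directed cycle or create a stack or sibling pair; the symmetric case is handled by symmetry of the gadget.

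Once $(w,r)$, $(w',r)$ and $(r,t)$ are fixed, $r$ is a reticulation. By the tree-child property, $w$ and $w'$ are tree vertices, each with children $r$ and a leaf, so each has its in-arc on its remaining edge. Propagating these in-arcs backwards toward $u$ will show that $u$ must send arcs into the gadget. Then $u$ needs its in-arc from $s$, since otherwise $u$ is a source (contradicting that the root lies outside $G$) or produces a cycle. This gives $(s,u)$.

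I expect the main obstacle to be the case in which $r$ is a tree vertex with exactly one parent in $\{w,w'\}$. There the contradiction is not immediate, and one must chase directions through the part of the gadget around $u$. I would handle it as in the third paragraph of the proof of Lemma~\ref{l:connection}: repeatedly use ``every non-leaf vertex has a non-reticulation child'' and ``no out-degree $3$'' to force each remaining edge, until a directed $4$- or $5$-cycle appears.
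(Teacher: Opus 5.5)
Your overall strategy, an exhaustive local orientation analysis organised around the three edges at $r$, can be made to work. The paper's proof is the same kind of argument, organised differently: it fixes $(v,v')$ by symmetry and splits on whether $v'$ is a tree vertex. As written, however, your proposal is not yet a proof. Every contradiction is deferred to ``propagating around the gadget''. You also never use the part of the gadget where those contradictions actually arise: $u$ is adjacent to $s$, $v$ and $v'$; $v$ is adjacent to $v'$; $v$ is adjacent to $w$ and $v'$ to $w'$. Every directed cycle and tree-child violation, and the final arc $(s,u)$, comes from this triangle on $u,v,v'$. So none of your branches is actually closed.

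The one concrete mechanism you give, for the case you flag as hardest, fails. Suppose $(w,r)$, $(r,w')$, $(r,t)$. Then $w'$ has exactly one in-arc and one out-arc, so neither the degree constraints nor the tree-child condition forces the edge $\{w',v'\}$; you must branch on it. Note that $(v,w)$ is forced, since $w$ already has two out-arcs.
\begin{itemize}
\item If $(w',v')$: the arc $(v',v)$ closes the cycle $v\to w\to r\to w'\to v'\to v$. The arc $(v,v')$ makes $v'$ a reticulation with child $u$, which forces $(u,v)$ and the cycle $u\to v\to v'\to u$.
\item If $(v',w')$: $w'$ is a reticulation, so $v'$ must be a tree vertex (no stack) whose other child is not a reticulation. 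Whichever of $u,v$ is the parent of $v'$, the remaining triangle edge is forced by that requirement and closes a directed $3$-cycle.
\end{itemize}
The case $(t,r)$ also needs this triangle chase, which you must actually carry out. For example, if $(w,r),(r,w')$, then $w'$ is a tree vertex, giving $(w',v')$ and $(v,w)$, and the same two cycles appear.

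Your final step needs the triangle as well. Once $(v,w)$ and $(v',w')$ hold, fix $(v,v')$ by symmetry. Then $v$ has two out-arcs, which forces $(u,v)$. The arc $(v',u)$ would close $u\to v\to v'\to u$, so $(u,v')$ holds. Now $u$ has two out-arcs, hence $(s,u)$.
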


\begin{proof}
By symmetry, we may assume throughout the proof that $(v,v')$ is an arc in $N$. Suppose that $v'$ 
is a tree vertex
in $N$. Hence, $(v',u)$ and $(v',w')$ are arcs in $N$ and, as $N$ is acyclic, $(v,u)$ is also an arc in $N$. Furthermore, since $v$ has in-degree at least 1, the arc $(w,v)$ exists. Applying the same argument to $w$, the arc $(r,w)$ also exists. This in turn implies that $(r,w')$ is an arc in $N$ because, otherwise, $N$ contains a directed cycle. Now both children $u$ and $w'$ of $v'$ 
are reticulations,
thereby contradicting that $N$ is tree-child. We may therefore assume for the remainder of the proof that $v'$ is a reticulation. Next suppose that $(w',v')$ and $(v',u)$ are arcs in $N$. Since $N$ is tree-child, $u$ has in-degree 1, which implies that $(u,v)$ is an arc. It now follows that $(v,v')$, $(v',u)$, and $(u,v)$ are arcs of a directed cycle in $N$, a contradiction. Hence, $(u,v')$ and $(v',w')$ are arcs in $N$. Since $N$ is tree-child, $w'$ 
is a tree vertex,
which implies that the arc $(w',r)$ exists. If $(w,v)$ is an arc, then $(r,w)$ is also an arc because $w$ has in-degree at least 1. Now the arcs $(v,v')$, $(v',w')$, $(w',r)$, $(r,w)$, and $(w,v)$ form a directed cycle, another contradiction. Thus $(v,w)$ is an arc in $N$. Since $N$ is tree-child and $v'$ is a reticulation, $w$ 
is a tree vertex,
and so the arc $(w,r)$ exists in $N$. It is now easily checked that, provided $N$ is tree-child, Figure~\ref{fig:reticulation-gadget-rooted} shows the only two possibilities to direct the edges of $G$. This establishes the lemma.
\end{proof}

To establish that {\sc Tree-Child Orientation} is NP-complete, we use a reduction from a variant of \textsc{Balanced $3$-SAT} in which each literal appears exactly twice and which was shown to remain NP-complete by Berman et al.~\cite[Theorem 1]{berman03}. Before we are in a position to formally state this variant, we need some additional definitions.  Let $V = \{x_1, x_2, \ldots, x_n\}$ be a set of variables. A \emph{literal} is a variable $x_i$ or its negation $\bar{x}_i$, and a \emph{clause} is a disjunction of a subset of $\{x_i, \bar{x}_i: i\in \{1, 2, \ldots, n\}\}$.  Furthermore, a {\it Boolean formula in conjunctive normal form}  is a conjunction of clauses, i.e., an expression of the form $\Phi= \bigwedge_{j = 1}^m c_j$, where $c_j$ is a clause for all $j$.  A {\em truth assignment} for $V$ is a mapping $\beta \colon V \rightarrow \{T, F\}$, where~$T$ represents the truth value True and $F$ represents the truth value False. A truth assignment $\beta$ \emph{satisfies} $\Phi$ if at least one literal of each clause evaluates to $T$ under $\beta$. 

\begin{center}
\noindent\fbox{\parbox{.95\textwidth}{
\noindent\textsc{$2$-Balanced $3$-SAT}\\
\textbf{Instance.} A set $V=\{x_1,x_2,\ldots,x_n\}$ of variables and a collection $C=\{c_1,c_2,\ldots,c_m\}$ of clauses over $V$ such that each clause consists of three literals and, over all clauses, each variable appears negated exactly twice and unnegated exactly twice.\\
\textbf{Question.} Is there a truth assignment for $V$ that satisfies $C$? 
}}
\end{center}

\noindent Throughout the remainder of this section, we use $\ell$ to denote a labeled leaf an $l$ to denote a literal. Now let $\Phi$ be an instance of {\sc $2$-Balanced $3$-SAT} with variables $V=\{x_1,x_2,\ldots,x_n\}$ and clauses $C=\{c_1,c_2,\ldots,c_m\}$. 
Furthermore, for each $j\in\{1,2,\ldots,m\}$, let $c_j=(l_j^1\vee l_j^2\vee l_j^3)$. We next construct an unrooted phylogenetic network that contains 
$n_r=1+2n$
copies of the reticulation gadget, and 
$n_c=3m+2n$
copies of the connection gadget, and in which the literals $l_j^k$ with $k\in\{1,2,3\}$ appear as internal vertices.
We do this by applying the following four steps.

\begin{enumerate}
\item {\bf Root gadget.} Let $P=(p_1,p_2,\ldots,p_{n_r-2})$ be a path.\ Furthermore, let $R_r, R^1,R^2,\ldots,R^{n_r-1}$ be copies of the reticulation gadget. Now, identify the $t$-terminal of $R_r$ with $p_1$, add two edges that each joins the $s$-terminal of $R_r$ with a new leaf, say $\ell_r$ and $\ell_{r}'$. Moreover, for each $k\in\{1,2,\ldots,n_r-2\}$, identify the $s$-terminal of $R^k$ with $p_k$, and identify the $s$-terminal of $R^{n_r-1}$ also with 
$p_{n_r-2}$.
\item {\bf Clause gadget.} For each clause $c_j$, let $l_j^1$, $l_j^2$, $l_j^3$,  $\ell_j^1$, $\ell_j^2$, $\ell_j^3$,
and $z_j$ be new vertices, and let $C_j^1$, $C_j^2$, and $C_j^3$ be copies of the connection gadget. For each $k\in\{1,2,3\}$, add the edges $\{l_j^k,t_j^k\}$, $\{t_j^k,\ell_j^k\}$, and identify $s_j^k$ with $z_j$, where $s_j^k$ and $t_j^k$ is the $s$-terminal and $t$-terminal, respectively, of $C_j^k$.
That is, a clause gadget is constructed by joining together three connection gadgets by identifying their $s$-terminals, and adding two new vertices adjacent to each of the three $t$-terminals. 
\item {\bf Variable gadget.} For each variable $x_i$, let $$X_i=\{l_j^k: j\in\{1,2,\ldots,m\}, k\in\{1,2,3\}, \text{ and } l_j^k=x_i\}, \text{ and let}$$ $$\bar{X}_i=\{l_j^k: j\in\{1,2,\ldots,m\}, k\in\{1,2,3\}, \text{ and } l_j^k=\bar{x}_i\}.$$ 
Recall that $|X_i|=|\bar{X}_i|=2$. Let $G_i^1$ and $G_i^2$ be copies of the  connection gadget. Add an edge that joins the $s$-terminal of $G_i^1$ with a new vertex $r_i^2$ and an edge that joins $r_i^2$ with the $t$-terminal of~$G_i^2$.~Additionally, add an edge that joins the $t$-terminal of $G_i^1$ with a new vertex $r_i^1$ and an edge that joins $r_i^1$ with the $s$-terminal of $G_i^2$.
\item \label{step:connecting} {\bf Connecting the root, clause, and variable gadgets.} For each variable $x_i$, apply the following two steps. First, to connect the variable gadget of $x_i$ with the root gadget, identify each vertex $r_i^h$ with $h\in\{1,2\}$ with the $t$-terminal of a reticulation gadget that is not $R_r$. Across all $n$ variable gadgets, this identification process is done such that the $t$-terminal of each reticulation gadget that is not $R_r$ is identified exactly once. Second, to connect the variable gadget of $x_i$ with the clause gadgets, identify the $s$-terminal of $G_i^1$ and $G_i^2$ with the vertex $l_j^k$ of a clause gadget such that $l_j^k\in X_i$, and identify the $t$-terminal of $G_i^1$ and $G_i^2$ with the vertex $l_j^k$ of a clause gadget such that $l_j^k\in \bar{X}_i$. Across all $n$ variable gadgets, this identification process is done such that  each vertex $l_j^k$ with $j\in\{1,2,\ldots,m\}$ and $k=\{1,2,3\}$ is identified exactly once with the $s$-terminal or $t$-terminal of a connection gadget. Denote the resulting graph by $U_\Phi$.
\end{enumerate} 

\begin{figure}[t]
    \centering
\scalebox{0.9}{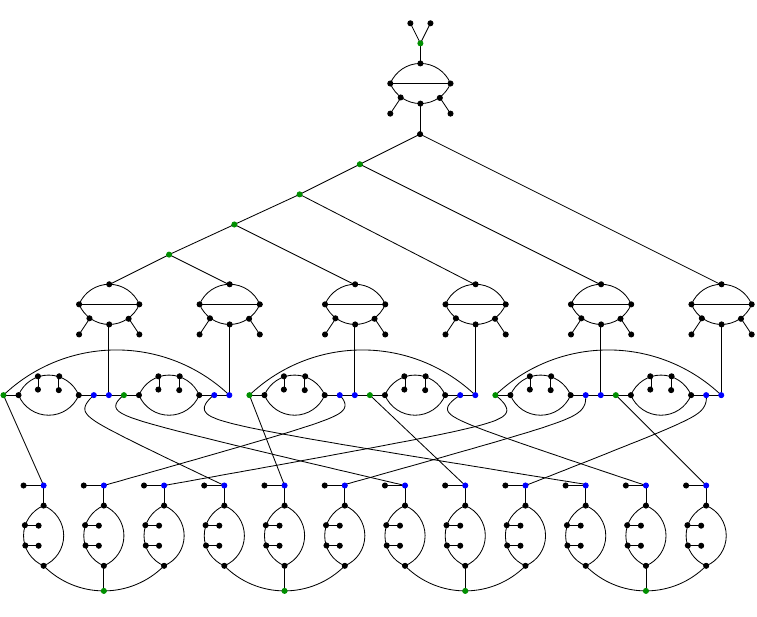}
    \caption{The unrooted phylogenetic network $U_\Phi$ for $\Phi=(l_1^1\vee l_1^2\vee l_1^3)\wedge(l_2^1\vee l_2^2\vee l_2^3)\wedge (l_3^1\vee l_3^2\vee l_3^3)\wedge(l_4^1\vee l_4^2\vee l_4^3)=(x\vee \bar{y}\vee z)\wedge(\bar{x}\vee y\vee \bar{z})\wedge(x\vee y\vee\bar{z})\wedge(\bar{x}\vee\bar{y}\vee z)$. To simplify the presentation, some leaf labels are omitted. The $s$-terminal (resp. $t$-terminal) of each connection and reticulation gadget is indicated in green (resp. blue). Note that $p_1$ is the $t$-terminal of $R_r$ and the $s$-terminal of $R^1$.}
    \label{fig:example-unrooted}
\end{figure}

\noindent The construction is illustrated in Figure~\ref{fig:example-unrooted} for the {\sc $2$-Balanced $3$-SAT} instance $$\Phi=(x\vee \bar{y}\vee z)\wedge(\bar{x}\vee y\vee \bar{z})\wedge(x\vee y\vee\bar{z})\wedge(\bar{x}\vee\bar{y}\vee z).$$ Without loss of generality, we may assume that all leaves of $U_\Phi$ have distinct labels. Then a straightforward check shows that $U_\Phi$ is an unrooted phylogenetic network on $X$ with $|X|=2+2(n_r+n_c)+3m$. 

\begin{theorem}\label{t:hardness}
{\sc Tree-Child Orientation} is {\em NP}-complete.
\end{theorem}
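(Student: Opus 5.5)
Membership in NP is immediate: a certificate is an edge $e$ of $U$ together with a direction for every edge of the graph obtained by subdividing $e$ with $\rho$. We can check in polynomial time that this gives an acyclic digraph with the required in- and out-degrees, and that no vertex has two reticulation children. For hardness we use the construction of $U_\Phi$ from an instance $\Phi$ of \textsc{$2$-Balanced $3$-SAT}. It has polynomial size, since it uses $n_r+n_c$ gadgets of constant size. It remains to show that $\Phi$ is satisfiable if and only if $U_\Phi$ has a tree-child orientation.

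\emph{($\Rightarrow$)} Let $\beta$ satisfy $\Phi$. We subdivide the edge joining the $s$-terminal of $R_r$ to $\ell_r$ with $\rho$.
\begin{itemize}
\item Orient $R_r$ and every $R^k$ as in Figure~\ref{fig:reticulation-gadget-rooted}, so that flow enters at $s$ and leaves at the reticulation $r$ towards $t$. Orient the path $P$ away from $p_1$.
\item For each variable $x_i$, the two reticulation gadgets send arcs into $r_i^1$ and $r_i^2$, and each of these vertices must be a tree vertex.
\item If $\beta(x_i)=T$, orient $G_i^1$ and $G_i^2$ so that their $s$-terminals (attached to $X_i$) are the exits towards the clause gadgets. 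This is option~(i) or (ii) of Lemma~\ref{l:connection}, with the reticulation placed inside the gadget. If $\beta(x_i)=F$, use the $t$-terminals instead.
\item In each clause gadget $C_j$ there is a true literal $l_j^k$, which receives flow from its variable gadget. Orient $C_j^k$ from $t_j^k$ towards $z_j$, and the other two connection gadgets from $z_j$ outwards towards their $t$-terminals.
\item False literal vertices $l_j^{k'}$ become reticulations fed both from the clause side and from the variable side. They then have a single child, the leaf-side edge via $t$, so stacks are avoided.
\end{itemize}
Verifying acyclicity and the tree-child condition (Lemma~\ref{l:tc}) is then a local check at each identification vertex. I would present it via figures, mirroring Figures~\ref{fig:connection-gadget-rooted} and~\ref{fig:reticulation-gadget-rooted}.

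\emph{($\Leftarrow$)} Let $N$ be a tree-child orientation. The first step is to locate the root. The argument I would try is a counting/structural one. If $\rho$ lay in a gadget other than $R_r$, then Lemmas~\ref{l:connection} and~\ref{l:reticulation}, applied to all other gadgets, force every reticulation gadget to have its reticulation $r$ pointing into its $t$-terminal. Tracing these forced directions along $P$ then produces a contradiction: either a directed cycle, or two reticulation children at some $p_k$. The same holds if $\rho$ subdivides an edge outside the gadgets. So we may assume $\rho$ lies in $R_r$ or on the pendant edges at its $s$-terminal, and then all other gadgets satisfy the lemmas. Consequently each $r_i^h$ has an incoming arc from a reticulation, so it is a tree vertex with out-arcs into $G_i^1$ and $G_i^2$.

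Now apply Lemma~\ref{l:connection} to $G_i^1$ and $G_i^2$. Each gadget is traversed either from $s$ to $t$ or from $t$ to $s$, and the reticulation-avoidance conditions force both gadgets to point outward at the same side, either both $X_i$ or both $\bar X_i$. Set $\beta(x_i)=T$ exactly when the flow exits towards the literal vertices in $X_i$.

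In the clause gadget, $z_j$ needs an incoming arc. By Lemma~\ref{l:connection} that arc comes from some $C_j^k$ oriented towards $z_j$, so its $t_j^k$ receives flow from $l_j^k$. Hence $l_j^k$ has an arc from its variable gadget and is not a reticulation, which is the no-stack condition together with the parity from Lemma~\ref{l:connection}. That literal is true under $\beta$, so $\beta$ satisfies $\Phi$.

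The main obstacle is pinning down the root location and the consistency inside each variable gadget (both $G_i^1$ and $G_i^2$ agree). I would handle it by an exhaustive case analysis on the orientations at $r_i^1$ and $r_i^2$ using Lemma~\ref{l:connection} and Lemma~\ref{l:tc}.
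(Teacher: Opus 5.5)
Your NP-membership argument and the variable-gadget step of your converse match the paper. The reasoning at the literal/clause interface, however, is wrong in both directions.

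\emph{Forward direction.} No literal vertex can have a parent on the clause side. Suppose, say, $(t_j^{k'},v_i^1)$ were an arc. Its other parent is the tree vertex $r_i^1$, so $v_i^1$ is a reticulation with child $v$. Then $G_i^1$ is oriented as in (ii) of Lemma~\ref{l:connection}, and $u_i^1$, with parents $u$ and $r_i^2$, is a reticulation too. Now $r_i^1$ (children $v_i^1,u_i^2$) forces $u_i^2$, and $r_i^2$ (children $u_i^1,v_i^2$) forces $v_i^2$, to be tree vertices. Yet the terminal through which $G_i^2$ exits is a reticulation, a contradiction.

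Hence every edge $\{l_j^k,t_j^k\}$ must be directed $(l_j^k,t_j^k)$, exactly as in Step~7 of the paper. Moreover, whenever $l_j^k$ is a reticulation, $t_j^k$ is a tree vertex with children $\ell_j^k$ and $v$, so $C_j^k$ is forced to point \emph{towards} $z_j$. Thus it is the reticulated (false) literals whose gadgets point towards $z_j$. The paper points the true literals' gadgets away from $z_j$, turning one inwards only if all three literals are true.

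Your orientation (one true literal inwards, the other two outwards, false literals as reticulations fed from the clause side) is the reverse and cannot be completed. A false, reticulated literal whose gadget points away from $z_j$ yields either the stack $(l_j^{k'},t_j^{k'})$ or the impossible configuration above. Acyclicity is also not a local check: the paper's argument that no directed cycle passes through a $z_j$ uses precisely that all arcs $(l_j^k,t_j^k)$ point into the clause gadgets.

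\emph{Converse.} The step ``$z_j$ needs an in-arc, so some $C_j^k$ points towards $z_j$, so $l_j^k$ is not a reticulation'' is a non sequitur. It only shows that $t_j^k$ is a tree vertex with parent $l_j^k$, which is exactly the forced situation when $l_j^k$ \emph{is} a reticulation. The clause is encoded by the \emph{upper} bound on the in-degree of $z_j$. If $l_j^1,l_j^2,l_j^3$ were all reticulations, each $t_j^k$ would be a tree vertex with parent $l_j^k$. By Lemma~\ref{l:connection} all three $C_j^k$ would then be oriented as in (ii), and $z_j$ would have in-degree~$3$. Setting $\beta(x_i)=T$ iff $v_i^1,v_i^2$ are reticulations makes a literal false iff its vertex is a reticulation, and satisfaction follows.

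Finally, the root-location contradiction is not along $P$: if $R_r$ obeys Lemma~\ref{l:reticulation}, orienting $P$ from $p_1$ onwards is consistent. The contradiction is at $R_r$ itself. If $\rho$ is neither in $R_r$ nor on its pendant edges, Lemma~\ref{l:reticulation} gives the arc $(s,u)$ in $R_r$. Since the other neighbours of this $s$ are the leaves $\ell_r,\ell_r'$, $s$ would then have in-degree~$0$.
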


\begin{proof}
Let $U$ be an unrooted phylogenetic network, and let $N$
be an orientation of $U$. Since it can be checked in polynomial time if $N$ is tree-child,  it follows that {\sc Tree-Child Orientation} is in NP.

To establish NP-hardness, let $\Phi$ be an instance of {\sc $2$-Balanced $3$-SAT}, and let $U_\Phi$ be the unrooted phylogenetic network that has been reconstructed from $\Phi$ as described in the construction that follows the formal statement of {\sc $2$-Balanced $3$-SAT}. Furthermore, let $V=\{x_1,x_2,\ldots,x_n\}$ be the set of variables of $\Phi$, and let $C=\{c_1,c_2,\ldots,c_m\}$ be the set of clauses of $\Phi$. It is easy to check that the size of $U_\Phi$ is polynomial in $m$ and $n$. We complete the proof by showing that $\Phi$ is a yes-instance if and only if $U_\Phi$ has a tree-child orientation. 

Suppose that $\Phi$ is a yes-instance. Let $\beta$ be a truth assignment that satisfies $\Phi$. We next construct an orientation $N_\Phi$ of $U_\Phi$. 
\begin{enumerate}[1.]
\item Subdivide the edge that is incident with $\ell_r$ with a new vertex $\rho$, and direct each edge that is incident with $\rho$ away from $\rho$.
\item For each leaf $\ell$, direct the edge that is incident with $\ell$ into $\ell$.
\item Direct the edges of each reticulation gadget $R_r,R^1,R^2,\ldots,R^{n_r-1}$ as shown on the left-hand side of Figure~\ref{fig:reticulation-gadget-rooted}.
\item For each $k\in\{1,2,\ldots,n_r-3\}$, direct the edge $\{p_k,p_{k+1}\}$ of $P$ as $(p_k,p_{k+1})$.
\item Direct each edge that is incident with the $t$-terminal of a reticulation gadget $R^1,R^2,\ldots,R^{n_r-1}$ and a vertex that is not contained in any reticulation gadget away from the $t$-terminal. 
\item \label {step:reticulations} For each $i\in\{1,2,\ldots,n\}$ with  $\beta(x_i)=T$,  direct the edges of the connection gadgets $G_i^1$ and $G_i^2$ as shown on the left-hand side of Figure~\ref{fig:connection-gadget-rooted}.
Similarly, for each $i\in\{1,2,\ldots,n\}$ with  $\beta(x_i)=F$,  direct the edges of the connection gadgets $G_i^1$ and $G_i^2$ as shown on the right-hand side of Figure~\ref{fig:connection-gadget-rooted}. 
\item \label{step:no-cycle} For each $j\in\{1,2,\ldots,m\}$ and $k\in\{1,2,3\}$ direct the edge $\{l_j^k,t_j^k\}$ as $(l_j^k,t_j^k)$, and apply one of the following two:
\begin{enumerate}[(i)]
\item If $c_j$ contains at least one literal that evaluates to $F$ under $\beta$, then direct the edges of $C_j^k$ as shown on the right-hand side of Figure~\ref{fig:connection-gadget-rooted} if $l_j^k$ evaluates to $F$ under $\beta$ and as shown on the left-hand side of Figure~\ref{fig:connection-gadget-rooted} if $l_j^k$ evaluates to $T$ under $\beta$.
\item If $c_j$ contains three literals that each evaluate to $T$ under $\beta$, then direct the edges of $C_j^1$ and $C_j^2$ as shown on the left-hand side of Figure~\ref{fig:connection-gadget-rooted} and the edges of $C_j^3$ as shown on the right-hand side of Figure~\ref{fig:connection-gadget-rooted}.
\end{enumerate}
\end{enumerate}
As an example,  the orientation $N_\Phi$ of $U_\Phi$ for $$\Phi=(x\vee \bar{y}\vee z)\wedge(\bar{x}\vee y\vee \bar{z})\wedge(x\vee y\vee\bar{z})\wedge(\bar{x}\vee\bar{y}\vee z)$$ is illustrated in Figure~\ref{fig:example-rooted}. 

\begin{figure}[t]
    \centering
\scalebox{0.9}{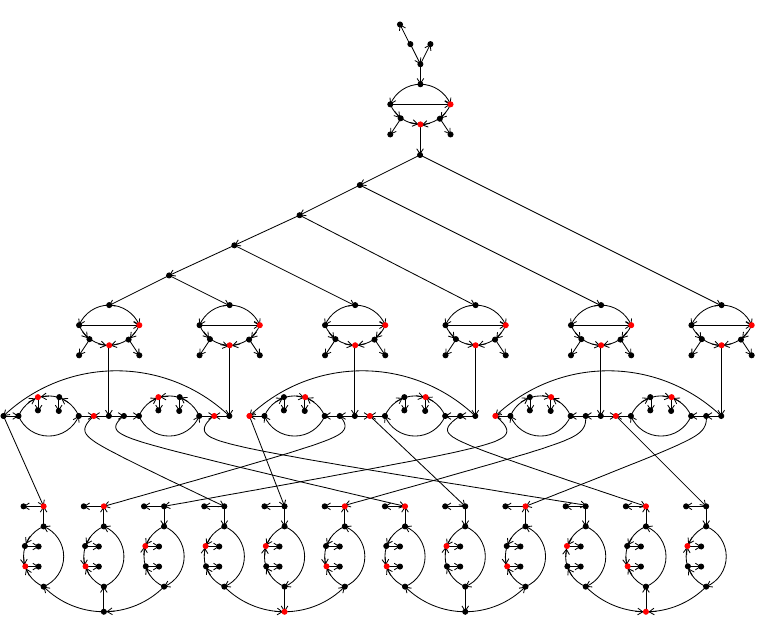}
    \caption{The rooted phylogenetic network $N_\Phi$ for  $\Phi=(l_1^1\vee l_1^2\vee l_1^3)\wedge(l_2^1\vee l_2^2\vee l_2^3)\wedge (l_3^1\vee l_3^2\vee l_3^3)\wedge(l_4^1\vee l_4^2\vee l_4^3)=(x\vee \bar{y}\vee z)\wedge(\bar{x}\vee y\vee \bar{z})\wedge(x\vee y\vee\bar{z})\wedge(\bar{x}\vee\bar{y}\vee z)$ with 
    truth assignment $\beta(x)=T$, $\beta(y)=F$, and $\beta(z)=F$ that satisfies $\Phi$. Observe that $N_\Phi$ is a tree-child orientation of the 
 unrooted phylogenetic network $U_\Phi$ as shown in Figure~\ref{fig:example-unrooted}. To simplify the presentation, some leaf labels are omitted. Reticulations are indicated in red.}
    \label{fig:example-rooted}
\end{figure}

It immediately follows from the construction of $N_\Phi$ that each internal vertex of $N_\Phi$ has either in-degree 1 and out-degree 2, or in-degree 2 and out-degree 1. We next  make four observations about the reticulations of $N_\Phi$. First, each connection gadget contains exactly one reticulation and each reticulation gadget contains exactly two reticulations (see Figures~\ref{fig:connection-gadget-rooted} and~\ref{fig:reticulation-gadget-rooted}). Second, it follows from Step~\ref{step:reticulations} that, for each $i\in\{1,2,\ldots,n\}$ with $\beta(x_i)=T$, the $t$-terminal of the connection gadgets $G_i^1$ and $G_i^2$ is a reticulation, and for each $i\in\{1,2,\ldots,n\}$ with $\beta(x_i)=F$, the $s$-terminal of the connection gadgets $G_i^1$ and $G_i^2$ is a reticulation. Third, if $l_j^k$ with $j\in\{1,2,\ldots,m\}$ and $k\in\{1,2,3\}$ is a reticulation, then $t_j^k$ is a tree vertex due to Step~\ref{step:no-cycle}. Fourth, $z_j$ is a reticulation if and only if $c_j$ contains two literals that both evaluate to $F$ under $\beta$. Taken together, a routine check now shows that each non-leaf vertex in $N_\Phi$ has a child that is a leaf or a reticulation. Lastly, we justify that $N_\Phi$ is acyclic. If $N_\Phi$ contains a directed cycle, then this cycle contains $z_j$ for some $j\in\{1,2,\ldots,m\}$ and a vertex of some connection gadget $G_i^1$ or $G_i^2$ with $i\in\{1,2,\ldots,n\}$.  By observing that each edge $\{l_j^k,t_j^k\}$ in $U_\Phi$ is directed as $(l_j^k,t_j^k)$ in $N_\Phi$, it follows that such a cycle cannot exist. In summary, $N_\Phi$ is a tree-child network with root $\rho$.

For the converse, suppose that $U_\Phi$ has a tree-child orientation $N_\Phi$. Let $\rho$ be the root of $N_\Phi$. Since there exists a directed path from $\rho$ to every leaf of $N_\Phi$, it follows from the existence of $\ell_r$ and $\ell_r'$
and Lemma~\ref{l:reticulation} that $\rho$ subdivides an edge of $R_r$ or an edge that is incident with $\ell_r$ or $\ell_{r}'$. 

Now let $i$ be an element in $\{1,2,\ldots,n\}$. Consider the vertices $r_i^1$ and $r_i^2$ for each $i\in\{1,2,\ldots,n\}$. By construction, each such vertex has been identified with the $t$-terminal of a reticulation gadget. As $N_\Phi$ is tree-child, it follows 
from Lemma~\ref{l:reticulation}, that each of $r_i^1$ and $r_i^2$ is the child of a reticulation, and thus a tree vertex. Let $u_i^2$ and $v_i^1$ (resp. $u_i^1$ and $v_i^2$) be the two neighbors of $r_i^1$ (resp. $r_i^2$) that are not vertices of a reticulation gadget.
Without loss of generality, we may assume that, for each $h\in\{1,2\}$, we have  $u_i^h$ is the $s$-terminal of $G_i^h$ and $v_i^{h}$ is the $t$-terminal of $G_i^{h}$ in $U_\Phi$.
By Lemma~\ref{l:connection}, the edges of $G_i^h$ are oriented in $N_\Phi$ such that there is either a directed path from $u_i^h$ to $v_i^h$ or a directed path from $v_i^h$ to $u_i^h$ that only contains edges of $G_i^h$. 
Recalling that $r_i^1$ and $r_i^2$ are both tree vertices, this implies that either $u_i^h$ or $v_i^h$ is a reticulation. In particular, if there is a directed path from $u_i^1$ to $v_i^1$, then $v_i^1$ is a reticulation, and otherwise $u_i^1$ is a reticulation in $N_\Phi$. 
Assume that $v_i^1$ is a reticulation. 
Since $N_\Phi$ is tree-child, and $v_i^1$ and $u_i^2$ have $r_i^2$ as a common parent, $u_i^{2}$ is a tree vertex. Thus there is a directed path from $u_i^2$ to $v_i^2$ in $N_\Phi$ and, in particular, $v_i^2$ is a reticulation.  Now assume that $u_i^1$ is a reticulation. Since $N_\Phi$ is tree-child, $v_i^{2}$ is a tree vertex. Thus there is a directed path from $v_i^{2}$ to $u_i^{2}$ in $N_\Phi$ and, in particular, $u_i^{2}$ is a reticulation. Hence, either $u_i^1$ and $u_i^2$, or $v_i^1$ and $v_i^2$ are reticulations.

Let $\beta$ be the truth assignment for $V$  that sets, for each $i\in\{1,2,\ldots,n\}$, $x_i=T$ if $v_i^1$ and $v_i^2$ are both reticulations in $N_\Phi$, and $x_i=F$ if $u_i^1$ and $u_i^2$ are both reticulations in $N_\Phi$. 
Note that  by Step~\ref{step:connecting} in the construction of $U_\Phi$ from $\Phi$, each  element in $\{v_i^1,v_i^2,u_i^1,u_i^2\}$ is also a vertex $l_j^k$ that corresponds to the literal $l_j^k$ in $\Phi$. In particular, $l_j^k$ is assigned $F$ by $\beta$ if and only if $l_j^k$ is a reticulation in $N_\Phi$.

We complete the proof by showing that $\beta$ satisfies $\Phi$. Towards a contradiction, assume that $\beta$ does not satisfy $\Phi$. Then there exists a clause $c_j=(l_j^1\vee l_j^2\vee l_j^3)$ for some $j\in\{1,2,\ldots,m\}$ such that $l_j^1=l_j^2=l_j^3=F$ under $\beta$. Moreover, by 
the argument above,
it follows that each of $l_j^1$, $l_j^2$, and $l_j^3$ is a reticulation in $N_\Phi$. In turn, since $N_\Phi$ is tree-child, each $t_j^k$ with $k\in\{1,2,3\}$ is a tree vertex. Hence by Lemma~\ref{l:connection}, each of the three connection gadgets $C_j^k$ is directed as shown on the right-hand side of Figure~\ref{fig:connection-gadget-rooted} and, importantly, $z_j$ has in-degree 3 in $N_\Phi$, a final contradiction. This completes the proof of the theorem.
\end{proof}

\section{\boldmath Properties of $q$-cuttable networks}\label{sec:props}
In this section, we establish several properties of $q$-cuttable networks. Let $U$ be an unrooted phylogenetic network.  

The name {\it $q$-\cuttable} is motivated by the following characterization. A \emph{chain} is a path consisting of vertices that are all in the same blob and all incident to a cut-edge. The \emph{length} of the chain is the number of vertices in the path. The edges of the chain are the edges of the path, which are not cut-edges by definition. A length-$q$ chain is \emph{maximal} if it is not contained in a length-$q'$ chain with~$q'>q$.

\begin{observation}\label{ob:q-cut}
If~$U$ is an unrooted phylogenetic network, and $q$ an integer with $q\geq 1$, then the following are equivalent.
\begin{enumerate}[{\rm (i)}]
    \item $U$ is $q$-\cuttable, i.e., each cycle contains a $q$-chain.
    \item Deleting each vertex that is in a length-$q$ chain results in a forest.
    \item Deleting an arbitrary edge from each maximal chain of length at least~$q$ results in a forest.
\end{enumerate}
\end{observation}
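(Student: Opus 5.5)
I will prove the cycle of implications (i)$\Rightarrow$(ii)$\Rightarrow$(iii)$\Rightarrow$(i). The basic fact used throughout is that a cycle of $U$ never uses a cut-edge, so every cycle lies inside a single blob. Hence any path of vertices on a cycle that are all incident to cut-edges is automatically a chain in the sense just defined. In particular, (i) says exactly that every cycle $C$ contains a chain of length at least $q$ whose consecutive vertices are joined by edges of $C$.

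For (i)$\Rightarrow$(ii), take any cycle $C$ of $U$. By (i) it contains a vertex lying on a chain of length at least $q$, and that chain contains a subpath of exactly $q$ vertices which is a length-$q$ chain. So some vertex of $C$ is deleted, $C$ does not survive, and the remaining graph is acyclic, i.e.\ a forest.

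For (ii)$\Rightarrow$(iii), I would argue by contrapositive. Let $F$ be obtained by deleting one edge $e_P$ from each maximal chain $P$ of length at least $q$, and suppose $F$ contains a cycle $C$. Then $C$ is a cycle of $U$, so by (ii) it contains a vertex $v$ lying on a length-$q$ chain, which extends to a maximal chain $P$ of length at least $q$. The aim is to show that $C$ traverses the whole of $P$ and hence uses $e_P$, a contradiction. The binary structure does the work here: each internal vertex of a chain has degree $3$ with one cut-edge, so its two non-cut edges are exactly its two chain edges. A cycle through such a vertex cannot use the cut-edge, so it must use both chain edges. Propagating along $P$, the cycle $C$ follows $P$ to both ends and therefore contains every edge of $P$, including $e_P$. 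Some care is needed at the endpoints of $P$ and in the degenerate case $q=1$, where a maximal chain may be a single vertex with no edge to delete. For $q=1$ one instead checks directly that every vertex of a cycle lies on a chain, and that a maximal chain containing a vertex of a cycle must be the whole cycle (so a chain edge does exist), with the same propagation argument.

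For (iii)$\Rightarrow$(i), let $C$ be a cycle of $U$. Since the graph from (iii) is a forest, $C$ must use some deleted edge $e$, which lies on a maximal chain $P$ of length at least $q$. By the propagation argument above, $C$ contains all of $P$, so $C$ contains a path of at least $q$ vertices each incident to a cut-edge, which is (i).

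The main obstacle is making the propagation step rigorous: a cycle entering a chain cannot leave it midway. The key fact is that a chain vertex has exactly one cut-edge and at most two non-cut incident edges. I will also need to handle chains that close up into an entire cycle, where every vertex of the cycle is incident to a cut-edge, and to verify that the choice of edge in (iii) is genuinely arbitrary.
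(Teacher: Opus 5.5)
For $q\geq 2$ your argument is correct. A chain vertex has degree $3$, one incident cut-edge and at least two edges in its blob, so it has exactly two non-cut edges, and any cycle through it uses both. Hence a cycle meeting a chain $P$ with at least two vertices contains every edge of $P$. This one fact already settles three of the points you flagged:
\begin{itemize}
\item the endpoints, since at an endpoint of $P$ the $P$-edge is one of its two non-cut edges;
\item the blob that is a single cycle all of whose vertices carry cut-edges, where the maximal chains are the cycle minus one edge and overlap;
\item the arbitrariness of the deleted edge in (iii).
\end{itemize}
The paper states the observation without proof, and this propagation is the natural argument behind it.

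The genuine gap is your treatment of $q=1$. Your claims that every vertex of a cycle lies on a chain, and that a maximal chain meeting a cycle must be the whole cycle, are false: for $q=1$, condition (i) only guarantees that \emph{some} vertex of each cycle is incident to a cut-edge. Take $K_4$, subdivide each of its six edges once, and attach a leaf to each subdivision vertex.
\begin{itemize}
\item Every cycle passes through a subdivision vertex, so (i) holds.
\item Deleting the six subdivision vertices leaves an edgeless graph, so (ii) holds.
\item No two subdivision vertices are adjacent, so every maximal chain is a single vertex with no edge. Condition (iii) therefore deletes nothing (or cannot be performed), and the cycles survive.
\end{itemize}
So at $q=1$ the implications into (iii) are not merely unproved but false under the paper's definitions. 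No argument can supply them; you should instead record that (iii) requires $q\geq 2$, or a convention for edgeless chains.

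Your cyclic scheme also routes (ii)$\Rightarrow$(i) through (iii), so for $q=1$ you need a direct proof of it. The same propagation gives one for every $q\geq 1$: a cycle containing a vertex of a length-$q$ chain $Q$ contains all of $Q$. Your proof of (iii)$\Rightarrow$(i) is valid for every $q\geq 1$ as it stands.
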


The following theorem states that it can be decided in polynomial time if an unrooted phylogenetic network is $q$-\cuttable, which follows directly from Observation~\ref{ob:q-cut}.

\begin{theorem}\label{t:recognize}
Let $U$ be an unrooted phylogenetic network, and let $q$ be a positive integer with $q\geq 1$. It can be verified in polynomial time whether $U$ is $q$-\cuttable.
\end{theorem}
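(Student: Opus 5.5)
We give an algorithm based on characterization~(ii) of Observation~\ref{ob:q-cut}: $U$ is $q$-\cuttable precisely if deleting every vertex that lies in some length-$q$ chain leaves a forest. It therefore suffices to compute, in polynomial time, the set $D$ of vertices lying on a chain of length at least~$q$, delete $D$, and test the remaining graph for acyclicity.

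\textbf{Step 1: cut-edges and blobs.} Compute all cut-edges of $U$ by a linear-time bridge-finding algorithm (depth-first search with low-point values). Deleting all cut-edges and keeping the components with at least one edge gives the blobs. Mark every vertex that is incident to a cut-edge.

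\textbf{Step 2: the chains.} Inside each blob $B$, take the subgraph $H_B$ induced by the marked vertices of $B$, using only non-cut edges of $B$. Every vertex of $B$ has degree~$3$ in $U$. A marked vertex in $B$ has one cut-edge, so it has at most two neighbours in $B$; hence $H_B$ has maximum degree at most~$2$. Its components are therefore paths, or cycles in which every vertex is marked. A chain is exactly a path in some $H_B$.

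\textbf{Step 3: computing $D$.} In a path component, every vertex lies in a subpath of $q$ vertices exactly when the component has at least $q$ vertices. The component is then a maximal chain of length at least $q$. In a cycle component $C$ of $k$ vertices, the paths available have at most $k$ vertices, since a path cannot repeat vertices. So every vertex of $C$ lies in $D$ exactly when $k\ge q$. Let $D$ be the union of all qualifying components. We could instead use (iii) and delete one edge from each maximal chain, but the cycle case makes ``maximal chain'' slightly awkward, so (ii) is the cleaner choice.

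\textbf{Step 4: the final test.} Check whether $U-D$ is acyclic, for instance by comparing $|E|$ with $|V|$ minus the number of components, or by a single depth-first search. Every step runs in time linear in the size of $U$, and its correctness follows directly from Observation~\ref{ob:q-cut}.

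The main obstacle is conceptual rather than computational: the number of cycles of $U$ may be exponential, so applying Definition~\ref{def:q-cuttable} directly would be too slow. The observation removes this problem by turning a condition over all cycles into a single acyclicity test. The one point that needs care is Step~2, namely verifying that the chain structure within a blob has maximum degree~$2$, so that $D$ can be read off the component sizes.
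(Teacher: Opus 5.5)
Your proposal is correct and takes the same route as the paper, which simply states that the theorem follows directly from Observation~\ref{ob:q-cut}. You make explicit how to compute the vertices on chains of length at least~$q$, using that the marked vertices of each blob induce a subgraph of maximum degree~$2$, and then apply characterization~(ii) with a single acyclicity test.
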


In preparation for the proof of the following proposition, let $U$ be an unrooted phylogenetic network. A {\it partial orientation} $U'$ of $U$ is a graph with (undirected) edges and (directed) arcs 
that can be obtained from $U$ by directing a subset of the edges of $U$. If $e$ is a cut-edge in $U$ and an arc in $U'$, then $e$ is called a {\it cut-arc} in $U'$.

\begin{proposition}\label{prop:2-cut-tc}
Let $U$ be an unrooted phylogenetic network on $X$. If $U$ is $2$-\cuttable, then it has a tree-child orientation.
\end{proposition}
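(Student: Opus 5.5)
Our plan is to build a tree-child orientation explicitly, exploiting the structure given by Observation~\ref{ob:q-cut}(iii) with $q=2$. Every cycle of $U$ contains an edge $\{a,b\}$ of some blob where both $a$ and $b$ are incident to cut-edges. Pick the root as follows. If $U$ is a tree, the claim is trivial. Otherwise contract each blob to a single node; the resulting blob tree is a tree. Root $U$ by subdividing a pendant edge at some leaf $x$ with $\rho$, and orient every cut-edge away from $\rho$. Then each blob $B$ receives exactly one incoming cut-arc at an entry vertex $r_B$. Its remaining vertices each have one outgoing cut-arc, or are leaves, though leaves are not in blobs. So it remains to orient the internal edges of each blob independently.

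Inside a blob $B$, every vertex has degree $2$ within $B$, since each vertex of a blob in a binary network with a cut-edge has exactly two blob edges. Hence $B$ is a single cycle. This is the key simplification: in a binary network, a vertex with a pendant cut-edge has blob-degree $2$. However, blob vertices might have blob-degree $3$ when they have no cut-edge, so $B$ is in general a subdivision of a $2$-edge-connected cubic-ish graph. The vertices of degree $3$ in $B$, call them \emph{inner}, are the problem. I therefore refine the plan. Orient $B$ via an ear decomposition starting at $r_B$, and use 2-cuttability to ensure each reticulation is a vertex with a cut-edge whose cut-arc child is a tree vertex or leaf.

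Concretely, by Observation~\ref{ob:q-cut}(iii), delete one edge from each maximal chain of length at least $2$ in $B$. The result is acyclic within $B$ and connected, since $B$ is $2$-edge-connected and deleting one edge per chain... Connectivity needs care, so I will instead delete edges one at a time while checking that a spanning tree $T_B$ remains. Orient $T_B$ away from $r_B$. Each deleted edge $\{a,b\}$ has both endpoints incident to cut-edges, so $a,b$ have in-degree~$1$ from $T_B$. Orient $\{a,b\}$ from the endpoint that is deeper or equally deep, giving acyclicity via a topological order, e.g.\ a BFS/DFS order in $T_B$. The head then becomes a reticulation with single child its outgoing cut-arc target. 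That target is a tree vertex, an entry of another blob, or a leaf, so it is not a reticulation. No stack arises because a reticulation's child lies across a cut-edge. For no sibling reticulations, we need that no vertex has two reticulation children. The tail $a$ of a deleted edge has the cut-arc child as a non-reticulation. An inner or other vertex $w$ with both tree-children being heads of deleted edges must be avoided.

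The main obstacle is choosing the deleted edges and their orientations so that no parent gets two reticulation children; by Lemma~\ref{l:tc} this is the only remaining condition. I would handle it by orienting each deleted chain edge so that its head is the endpoint adjacent along the chain to its tail. Chains of length $\geq 2$ guarantee the tail's other chain neighbor structure, and I would argue the tree-parent of the head is either the tail's chain-side or a vertex whose other child crosses a cut-arc. If that fails, I would fall back to induction on the reticulation number: pick a chain edge $\{a,b\}$ and delete it; the result is still 2-cuttable after suppression, unless a chain is destroyed, which needs checking. Apply induction, then reinsert $\{a,b\}$ as an arc into $b$ and verify the tree-child conditions locally.
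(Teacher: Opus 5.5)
\textbf{Gap: the sibling-reticulation condition.} Your framework is the same as the paper's. You root on a pendant edge and keep in each blob $B$ a spanning tree $T_B$ obtained by deleting only chain edges. You orient $T_B$ away from the entry vertex $r_B$ and turn each deleted edge into a reticulation arc. You also see correctly that a head $b\neq r_B$ then has its cut-arc as its only child, so there are no stacks. Acyclicity holds as long as no deleted edge points into $r_B$. (As literally worded, ``orient from the deeper endpoint'' does the opposite at $r_B$ and closes a directed cycle; orienting along a BFS/DFS order of $T_B$ is what you want.)

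The gap is the condition you yourself single out as the main obstacle, namely that no vertex gets two reticulation children. It is never established, and the rule you propose fails. Take a blob whose entry vertex $r$ is adjacent to two vertices $p$ and $t$ without cut-edges. Let $p$ be adjacent to $v_1,v_2$ and $t$ adjacent to $s,w$, let the remaining blob edges be $\{v_1,s\}$ and $\{v_2,w\}$, and let each of $r,v_1,v_2,s,w$ carry a pendant leaf. This network is simple and 2-cuttable, and the deleted edges are forced to be $\{v_1,s\}$ and $\{v_2,w\}$. The vertices $v_1,v_2,s,w$ all have depth $2$ in $T_B$, and every BFS or DFS order of $T_B$ lists $v_1,v_2$ both before or both after $s,w$. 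Orienting along such an order therefore makes $s,w$ sibling reticulations below $t$, or $v_1,v_2$ sibling reticulations below $p$. Only the ``crossed'' choices $(v_1,s),(w,v_2)$ or $(s,v_1),(v_2,w)$ work. Your other remarks do not repair this. ``The head is the endpoint adjacent along the chain to its tail'' is true of every orientation. The assertion that the head's tree-parent is on the tail's chain side or has a cut-arc child is false here, because the tree-parents $p$ and $t$ have both children inside $B$.

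\textbf{Missing idea and the fallback induction.} What is missing is a global coordination of the orientations of the deleted edges. The paper builds an auxiliary multigraph $G_B$ whose vertices are the deleted edges of $B$. It has an edge $\{e,f\}$ for every path $(s,t,u,v,w)$ in $B$ with $e=\{s,t\}$ and $f=\{v,w\}$, i.e.\ for every pair that could make $t$ and $v$ sibling reticulations below $u$. Connectivity of $B$ minus the deleted edges forces $G_B$ to have maximum degree $2$, so it splits into paths and cycles. The paper then orients the deleted edges of each component in the direction of a single walk through $B$ that traverses them, with a deleted edge at $r_B$, if any, pointing away from $r_B$. This guarantees that for every such path $(s,t,u,v,w)$ the vertices $t$ and $v$ are not both heads, which gives tree-child while keeping acyclicity.

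Your fallback induction does not supply this. Even granting that deleting a chain edge $\{a,b\}$ keeps the network 2-cuttable, you reinsert $\{a,b\}$ by subdividing two arcs of the inductively obtained orientation. The tail of each subdivided arc may already have a reticulation as its other child, and then both directions of the new arc create sibling reticulations. Nothing in your inductive hypothesis controls the orientation to exclude this, and that ``local verification'' is exactly the substance of the proposition.
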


\begin{proof}
Suppose that $U$ is $2$-cuttable. Let $S$ be the subset of edges of $U$ that precisely contains each edge $e=\{u,v\}$ such that $e$ is not a cut-edge and there exist cut-edges $\{u,u'\}$ and $\{v,v'\}$ in $U$,i.e.,~$S$ consists of all edges that are in a maximal chain of length at least~$2$. As $U$ is $2$-cuttable, by Observation~\ref{ob:q-cut}, there exists a subset $S'$ of $S$ such that deleting each edge in $S'$ from $U$ results in a spanning tree~$T$ of $U$ with leaf set $X$. See Figure~\ref{fig:tc_orientable} for an example. 
Now subdivide an arbitrary cut-edge of~$U$ by a root~$\rho$ and obtain a partial orientation~$U'$ of the obtained network by orienting both edges incident to~$\rho$ and all edges of~$T$ away from~$\rho$ (with respect to~$T$).

It remains to show that the remaining edges of $U'$, i.e. the edges in~$S'$, can be oriented in such a way that the tree-child and acyclicity conditions are satisfied. Consider a blob~$B$ of~$U$ and the corresponding blob~$B'$ of~$U'$. Let~$r$ be the unique vertex of~$B'$ with an incoming cut-arc. We first make two observations, which both follow from the definition of $S'$ and the facts that $U$ is binary and the graph obtained from $B$ by deleting the edges in $S'$ is connected. 
\begin{enumerate}[(OB$1$)]
\item \label{three}There exists no path $(u,v,w)$ in $B$ such that $\{u,v\}, \{v,w\}\in S'$. 
\item \label{four} There exists no path $(t,u,v,w)$ in $B$ such that $\{t,u\},\{v,w\} \in S'$ and $\{u,v\}\notin S'$. 
\end{enumerate}
Now construct an auxiliary multigraph $G_B$ whose vertex set consists of the edges of $S'$ that are in~$B$ and which has an edge $\{e,f\}$ for each path $(s,t,u,v,w)$ in~$B$ with $e=\{s,t\}$, $f=\{v,w\}$, and~$e\neq f$. Note that, if $(s,t,u,v,w)$ is a path in~$B$, then the reverse path $(w,v,u,t,s)$ is also a path in $B$.  This is reflected by only a single edge in $G_B$. Furthermore, $G_B$ is a multigraph because there may exist another path $(t,s,z,w,v)$ that introduces a second edge $\{e,f\}$ into $G_B$.

Assume that $G_B$ has a vertex of degree at least 3. Then there exists a vertex $u$ in $B$ such that $(u,v_1,w_1)$, $(u,v_2,w_2)$, and $(u,v_3,w_3)$ are three edge-disjoint paths in $U$ and each edge $\{v_i,w_i\}$ with $i\in\{1,2,3\}$ is an edge in $S'$. It follows that the graph obtained from $B$ by deleting all edges in $S'$ is disconnected; a contradiction. Hence, each vertex in $G_B$ has degree at most 2, which implies that $G_B$ consists of paths and cycles that are pairwise vertex-disjoint.

Now construct an orientation~$N_B$ of~$B'$ as follows.
For each connected component $C$ of~$G_B$ with vertex set $V_C$, we define a path $P_C$ in $B'$ that contains edges and arcs, and can traverse an arc in either direction. Specifically, $P_C$ is a path in $B'$
that traverses all edges in $S'$ that correspond to the vertices in $V_C$, and no other edges in~$S'$. 
Furthermore, if~$P_C$ contains $r$ (i.e., the unique vertex of~$B'$ that has an an incoming cut-arc)
and traverses an edge~$\{r,v\}\in S'$, then~$P_C$ traverses~$r$ before~$v$ (if not, then we can reverse the path).
By construction, $P_C$ always exists. 
Then, for each edge $\{g,h\}\in S'$ such that $\{g,h\}$ is an edge of $P_C$, replace the edge $\{g,h\}$ with the arc $(g,h)$ if $P_C$ traverses $g$ before it traverses $h$ and with the arc $(h,g)$ otherwise. 
Next assume, towards a contradiction, that there exists a directed cycle $C$ in $N_B$.  Then $C$ contains a reticulation which, by construction, is be the head of an arc in $S'$.  Indeed, if $r$ is a vertex of $C$, then the arc in $C$ directed towards $r$ is in $S'$, but this is impossible because the path $P_C$ that traverses this edge would traverse $v$ before $r$.  Thus, $r$ is not a vertex of $C$.  Therefore, there exists a vertex $v$ of $C$ that has an incoming arc $(u,v)$ that is not an arc of $C$ on the path from $r$ to $v$ in $T$.  Since $v$ also has an incoming arc $(u',v)$ that is an arc of $C$, this makes $v$ a reticulation.  Moreover, we  have $\{u',v\} \in S'$. Thus,
$(u,v)$ is a cut-arc.  However, $r$ is the only vertex in $B$ with an incoming cut-arc, and we just observed that $v \ne r$.  This is the desired contradiction and shows that $N_B$ contains no directed cycle.

\begin{figure}
    \centering
    \includegraphics[width=\textwidth]{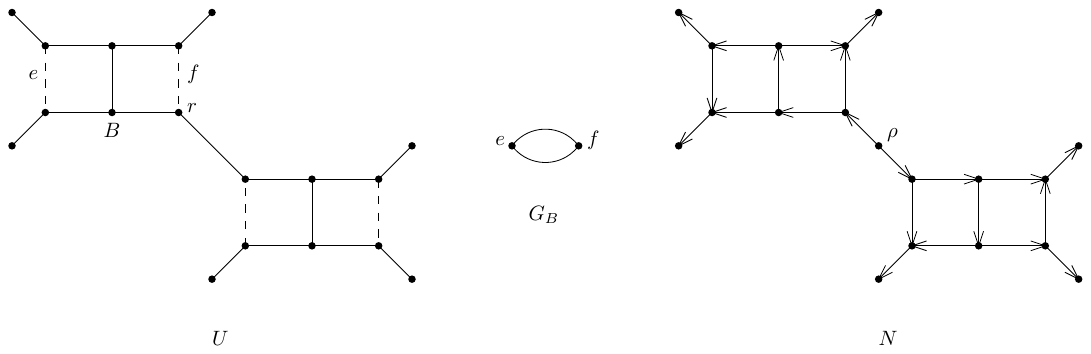}
    \caption{Example for the proof of Proposition~\ref{prop:2-cut-tc}. Left: A $2$-\cuttable network with the set~$S'$ indicated by the dashed edges. Middle: The auxiliary multigraph~$G_B$ for blob~$B$. Right: A tree-child orientation~$N$ of~$U$. To simplify the presentation, leaf labels are omitted.}
    \label{fig:tc_orientable}
\end{figure}

We now show that $N_B$ has no stack and no pair of sibling reticulations. 
That~$N_B$ has no stack follows directly from (OB\ref{three})
and (OB\ref{four}). Next assume that $N_B$ has a pair of sibling reticulations $v_1$ and $v_2$. In~$N_B$, let $p$ be the common parent of $v_1$ and $v_2$, and let $u_1$ (resp. $u_2$) be the second parent of $v_1$ (resp. $v_2$) that is not $p$. Then $(u_1,v_1,p,v_2,u_2)$ is a path in $B$. Since each reticulation of~$N_B$ is incident to an oriented edge from~$S'$, the path $(u_1,v_1,p,v_2,u_2)$ contains two edges from~$S'$. By (OB\ref{three}) and (OB\ref{four}), it follows that $u_1\ne u_2$ and that $e=\{u_1,v_1\}\in S'$ and $f=\{u_2,v_2\}\in S'$. However, this contradicts the construction of~$N_B$ because in this case $\{e,f\}$ is an edge of~$G_B$ and edges~$e,f$ should have been oriented as $(u_1,v_1),(v_2,u_2)$ or as $(v_1,u_1),(u_2,v_2)$. (This hold even if $e$ and $f$ are part of a cycle in $G_B$ and the path that contains them starts at $e$ and ends at $f$, or vice versa.)
We conclude that~$N_B$ satisfies the  tree-child conditions. 

Repeat the above for each blob. This gives an orientation~$N$ of~$U$. We have shown that the tree-child and acyclicity conditions are satisfied within each blob. In addition, since the root of each blob is not a reticulation, it follows from Lemma~\ref{l:tc} that~$N$ is  
a tree-child orientation of~$U$.
\end{proof}

The next corollary now follows from Proposition~\ref{prop:2-cut-tc}.

\begin{corollary}\label{c:2-cut-or}
Let $U$ be an unrooted phylogenetic network. If $U$ is $2$-\cuttable, then it is an orchard network.
\end{corollary}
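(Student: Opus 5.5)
The plan is to chain Proposition~\ref{prop:2-cut-tc} with two facts about rooted networks and with the orchard correspondence of~\cite{dempsey2024wild}. First, since $U$ is $2$-\cuttable, Proposition~\ref{prop:2-cut-tc} gives a tree-child orientation $N$ of $U$: $N$ is obtained from $U$ by subdividing an edge with a root $\rho$ and directing all edges, and $N$ is a rooted tree-child network. Second, I invoke the well-known fact that every rooted tree-child network is a rooted orchard network. Third, I apply the equivalence recalled in the preliminaries, namely \cite[Theorem 2.1]{dempsey2024wild}: an unrooted phylogenetic network is orchard precisely if it has an orientation that is a rooted orchard network. Since $N$ is such an orientation of $U$, it follows that $U$ is orchard.

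The only non-immediate step is the second one. If a citation is not considered sufficient, I would prove it directly by induction on the number of leaves plus reticulations, showing that every rooted tree-child network with at least two leaves has a reducible (rooted) cherry or reticulated cherry, and that reducing it preserves the tree-child property.

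For existence, take a vertex $v$ of maximum distance from the root among the non-leaf vertices. Each child of $v$ is then either a leaf or a reticulation whose child is a leaf; by the tree-child property, $v$ has a leaf child $x$. Let $c$ be the other child of $v$.
\begin{itemize}
\item If $c$ is a leaf $y$, then $\{x,y\}$ is a cherry.
\item If $c$ is a reticulation with leaf child $y$, then the reticulated-cherry configuration is present.
\item If $v$ is itself a reticulation, its unique child is a leaf $x$. In that case I instead choose a lowest tree vertex and run the same argument.
\end{itemize}

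For preservation, reducing a cherry deletes a leaf. Reducing a reticulated cherry deletes a reticulation edge and suppresses degree-2 vertices. Neither operation creates a stack or a pair of sibling reticulations, since no vertex gains a reticulation child; hence Lemma~\ref{l:tc} shows that the reduced network is still tree-child.

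The main obstacle is matching the rooted cherry-picking conventions of~\cite{dempsey2024wild} with the definition of rooted orchard networks used there. Because that correspondence is stated as a cited theorem, I expect the write-up to be only a few lines: Proposition~\ref{prop:2-cut-tc}, then tree-child implies orchard, then \cite[Theorem 2.1]{dempsey2024wild}.
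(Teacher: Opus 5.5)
Your proposal is correct and follows the paper's (implicit) argument: Proposition~\ref{prop:2-cut-tc} gives a tree-child orientation, rooted tree-child networks are orchard, and the equivalence from \cite[Theorem 2.1]{dempsey2024wild}, recalled in the preliminaries, transfers this to $U$. In your optional self-contained sketch, start directly from a tree vertex with no tree-vertex descendants (for example, the last tree vertex in a topological order), since that choice is what guarantees each reticulation child has a leaf child; your first choice of $v$ (a non-leaf vertex of maximum distance) does not, under the shortest-path reading of distance.
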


\begin{corollary}\label{c:size}
Let $U$ be a $2$-\cuttable network. Then the number of reticulations in $U$ is at most $|X|-1$.
\end{corollary}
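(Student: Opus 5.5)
Since $U$ is $2$-\cuttable, Proposition~\ref{prop:2-cut-tc} gives a tree-child orientation $N$ of $U$. Subdividing an edge with $\rho$ and orienting does not change the cycle rank, and in the binary rooted setting the number of reticulations of $N$ equals $|E(N)|-|V(N)|+1$. This number equals $r(U)$. So it suffices to bound $r(N)$ for a rooted binary tree-child network $N$ on $X$.

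The bound I aim for is $r(N)\le |X|-1$, which is the known sharp bound for binary tree-child networks. To prove it, I would use the standard argument that every vertex $v$ of a tree-child network has a \emph{tree path} to a leaf: repeatedly step to a child that is a tree vertex or a leaf. Define $f(v)$ as the leaf reached by a fixed choice of such a path. For each reticulation $h$, its unique child $c$ is a tree vertex or leaf, so $h$ is assigned the leaf $f(c)$ through the tree path starting at $h$.

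The key step is to show that distinct reticulations are assigned distinct leaves. This holds because tree paths never pass through a reticulation except at their start, and each vertex has at most one incoming arc from a tree path. Hence the tree paths starting at distinct reticulations are vertex-disjoint, so the map from reticulations to leaves is injective. Moreover, the tree path from $\rho$ (the root is not a reticulation) reaches a leaf that is not the endpoint of any reticulation's path, because that path is also disjoint from the others. Together these give $r(N)\le |X|-1$.

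The main obstacle is disjointness. I would show that if two such paths meet at a vertex $w$, then $w$ has in-degree at least $2$ via tree-path arcs, which is impossible, since tree-path arcs enter only tree vertices or leaves, and those have in-degree $1$. Equivalently, one can argue that each tree vertex or leaf has a unique tree-path predecessor, so the paths form vertex-disjoint chains, each starting at the root or at a reticulation and ending at a distinct leaf. This alternative gives the bound directly.
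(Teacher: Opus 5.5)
Your proof is correct and follows the paper's route: you obtain a tree-child orientation $N$ from Proposition~\ref{prop:2-cut-tc}, note that $r(N)=r(U)$, and bound $r(N)\le |X|-1$. The only difference is that the paper cites this last bound from Cardona et al.\ (Proposition~1), whereas you reprove it with the standard argument that tree paths from the root and from distinct reticulations are vertex-disjoint, and you justify $r(N)=r(U)$ explicitly by counting edges and vertices. For the disjointness step, take $w$ to be the first common vertex, or trace back through the unique parents of tree vertices and leaves; this makes your in-degree argument airtight.
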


\begin{proof}
It follows from Proposition~\ref{prop:2-cut-tc} that $U$ is tree-child. Hence, by definition, $U$ has a tree-child orientation $N$. Since $|X|-1\geq r(N)=r(U)$, where the inequality was established in~\cite[Proposition 1]{cardona2008comparison} and the equality 
follows from the construction of $N$ from $U$, the corollary follows.
\end{proof}

Let $U$ be an unrooted phylogenetic network on $X$. We say that $U$ is a {\it level-$k$} network if we can obtain a tree from $U$ by deleting at most $k$ edges from each blob. Moreover, $U$ is {\it strictly level-$k$} if $U$ is level-$k$ but not level-$(k-1)$. The following observation is easy to see, since any unrooted phylogenetic network can be turned into a  $q$-\cuttable network whose leaf set is a superset of $X$ by inserting a path of~$q$ vertices that are all adjacent to leaves not in $X$ into each cycle.

\begin{observation}\label{ob:all-levels}
For all~$k\geq 0$ and~$q\geq 1$, there exists a $q$-\cuttable  network that is strictly level-$k$.
\end{observation}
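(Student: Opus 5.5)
The plan is to give an explicit construction. For $k=0$, any unrooted phylogenetic tree with at least two leaves works. It has no cycles, so it is vacuously $q$-\cuttable, and it is level-$0$. Since it is a tree, it is not level-$(-1)$, whatever convention is used for that case. So assume $k\geq 1$.

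First, build a binary network $U_0$ that consists of a single blob $B_0$ with reticulation number exactly $k$, together with some leaves. One concrete choice is a "ladder": take two paths $(a_0,\dots,a_k)$ and $(b_0,\dots,b_k)$, and add the rungs $\{a_i,b_i\}$ for $0\le i\le k$. This graph is 2-edge-connected and has $2(k+1)$ vertices and $3k+1$ edges, so its cycle rank is $k$. The four degree-2 corners $a_0,b_0,a_k,b_k$ each receive a pendant leaf. The degree-3 vertices need no leaves. For $k=1$, the ladder is simply a 4-cycle with four pendant leaves. The result is simple and binary.

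Next, make $U_0$ $q$-\cuttable. For each edge $e$ of $B_0$, subdivide $e$ with $q$ new vertices and attach a new leaf to each of them, using fresh labels. Call the result $U$. The operation preserves degrees (every new vertex has degree 3) and preserves 2-edge-connectivity of the blob. It adds no cut-edges inside the blob, only the trivial pendant edges. So $U$ is an unrooted binary phylogenetic network with a single blob $B$, and $r(U)=r(U_0)=k$, because each subdivision-plus-leaf step adds two vertices and two edges.

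It remains to verify the two properties. \emph{$q$-\cuttable:} every cycle of $U$ lies in $B$ and is a subdivision of a cycle of $B_0$. It therefore contains every subdivision vertex of at least one edge of $B_0$, and these $q$ consecutive vertices are each incident to a (pendant) cut-edge. This gives the required path of $q$ vertices, as in Definition~\ref{def:q-cuttable}. \emph{Strictly level-$k$:} deleting a set $F$ of edges from $B$ leaves a connected acyclic graph only if $|F|\ge |E(B)|-|V(B)|+1=k$. So $U$ is not level-$(k-1)$. Conversely, deleting the $k$ edges outside a spanning tree of $B$ yields a tree, so $U$ is level-$k$.

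The step that needs the most care is the level computation. The paper's definition says that deleting at most $k$ edges per blob yields "a tree", so one must check that the deleted edges can be chosen so that the result is still a phylogenetic tree on the leaf set, i.e.\ connected and spanning. Choosing the complement of a spanning tree of $B$ does exactly this. The lower bound then follows because the cycle rank of $U$ is concentrated in its single blob.
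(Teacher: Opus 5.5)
Your proof is correct and takes essentially the paper's approach. The paper simply notes that inserting into each cycle a path of $q$ vertices, each adjacent to a new leaf, makes any network $q$-\cuttable; you instantiate this with an explicit strictly level-$k$ base network (the ladder) and subdivide every blob edge so that every cycle is covered. You also check, via the cycle rank of the single blob, that the level stays exactly $k$, which the paper leaves implicit.
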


\section{\boldmath Unrooted Tree Containment for  $3$-cuttable networks}\label{sec:tree-containment}
In this section, we show that {\sc Unrooted Tree Containment} is polynomial-time solvable for $3$-cuttable networks. 

\begin{center}
\noindent\fbox{\parbox{.95\textwidth}{
\noindent\textsc{Unrooted Tree Containment} $(T,U)$\\
\textbf{Instance.} An unrooted binary phylogenetic network $U$ on $X$, and an unrooted binary phylogenetic $X$-tree.\\
\textbf{Question.} Is $T$ displayed by $U$?
}}
\end{center}

\noindent Let $T$ be an unrooted phylogenetic $X$-tree with vertex set $V(T)$ and edge set $E(T)$.  Furthermore, let $U$ be an unrooted phylogenetic network on $X$ with vertex set $V(U)$ and edge set $E(U)$, and let $P(U)$ denote the set of all paths in $U$. We observe that $U$ displays  $T$ if and only if there exists a function $$\emb:V(T) \cup E(T) \rightarrow V(U) \cup P(U)$$  such that all of the following properties hold: 
\begin{enumerate}[(i)]
    \item $\emb(v)\in V(U)$ for all $v \in V(T)$,
    \item  $\emb(x) = x$ for all $x \in X$,
    \item $\emb(u) \neq \emb(v)$ for any two distinct vertices $u,v \in V(T)$,
    \item $\emb(\{u,v\})$ is a path between $\emb(u)$ and $\emb(v)$ for all edges $\{u,v\} \in E(T)$, and 
    \item $\emb(e)$ and $\emb(e')$ are edge-disjoint for any two distinct edges $e,e' \in E(T)$.
\end{enumerate}
If  $\emb$ exists, we call it an \emph{embedding of $T$ in $U$} (or simply an \emph{embedding} when $T$ and $U$ are clear from the context).
Thus deciding {\sc Unrooted Tree Containment} $(T,U)$ is  equivalent to deciding if there exists an embedding of $T$ in $U$. To illustrate, an unrooted phylogenetic tree $T$ that is displayed by an unrooted phylogenetic network $U$ and an embedding of $T$ in $U$ are shown in  Figure~\ref{fig:TC2}.

\begin{figure}[t]
    \centering
\scalebox{1.2}{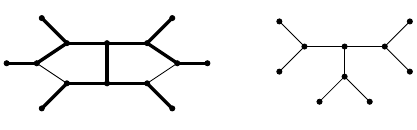}
    \caption{The unrooted phylogenetic network $U$ (left) displays the unrooted phylogenetic tree $T$ (right). Edges of an embedding $\emb$ of $T$ in $U$ are thicker than those not in $\emb$, where $\emb(x) = x$ for each $x \in \{a,b,c,d,e,f\}$, $\emb(u_1) = v_1$, $\emb(u_2)=v_8$, $\emb(u_3)=v_7$, $\emb(u_4)=v_4$, $\emb(u_1b) = (v_1,v_2,b)$, $\emb(u_3e) = (v_7,v_6,e)$, $\emb(u_4c) = (v_4,v_3,c)$, $\emb(u_4d) = (v_4,v_5,d)$, and $\emb(uu') = (\emb(u)\emb(u'))$ for all other edges $uu'$ in $T$.}
    \label{fig:TC2}
\end{figure}

The main result of this section is the following theorem.

\begin{theorem}\label{thm:3cuttableTC}
Let $T$ be an unrooted phylogenetic $X$-tree, and let $U$ be a $q$-\cuttable network on $X$ with $q\geq 3$. Then {\sc Unrooted Tree Containment} $(T,U)$ can be solved in polynomial time.
\end{theorem}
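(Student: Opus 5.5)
We plan to solve \textsc{Unrooted Tree Containment}$(T,U)$ by a sequence of polynomial-time reduction rules that shrink $U$ and $T$ while preserving the answer, until $U$ is a tree or has become trivially small. Since every $q$-\cuttable network with $q\ge 3$ is $3$-\cuttable, it suffices to take $q=3$. First, we reduce common cherries. If $\{x,y\}$ is a cherry of both $U$ and $T$, delete $x$ from both and suppress degree-2 vertices. The answer is unchanged, since an embedding can always route $x$ through the common neighbour of $x$ and $y$. We must also check that $3$-cuttability is preserved. Deleting a leaf only merges its neighbour's edges, which creates no new cycles, and the cut-edge that was pendant to $x$ disappears only together with a vertex that was itself a tree vertex. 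This needs care, and we will restrict the rule so that the suppressed vertex does not lie on a blob; a cherry adjacent to a common vertex is never on a cycle anyway. More generally, we identify common pendant subtrees of $T$ and $U$, that is, pendant subtrees of $U$ hanging off a cut-edge that are isomorphic to a pendant subtree of $T$ with the same leaf set. Each such subtree is replaced by a single new leaf. The result is that every cut-edge of $U$ leading away from a blob either ends in a leaf or in a subtree whose leaf set is not a pendant cluster of $T$.

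The core step is to analyse a blob $B$ that is ``outermost'', meaning that all but at most one of its incident cut-edges lead to pendant subtrees, which we may now assume are single leaves after the pendant-subtree rule (or else $U$ cannot display $T$, which we detect). By Observation~\ref{ob:q-cut}, $B$ minus its length-$\geq 3$ chains is a forest, so $B$ contains a maximal chain $(v_1,\dots,v_k)$ with $k\ge 3$ whose pendant leaves $x_1,\dots,x_k$ are consecutive. In such a chain, $\{x_i,x_{i+1}\}$ is a reticulated cherry in $U$. Our plan is to read the status of these leaves in $T$ and choose an edge of the chain to delete. Since $T$ is a tree, at least one edge of every cycle is unused by an embedding. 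The key claim is that whenever some embedding exists, there is one avoiding an edge of the chain that can be determined locally from $T$. For example, if $\{x_i,x_{i+1}\}$ is a cherry of $T$, delete the central edge on whichever side is consistent. If no two consecutive chain leaves form a cherry in $T$, the chain edges must be used in a restricted pattern, and an exchange argument should show that deleting the end edge $\{v_1,v_2\}$ or $\{v_{k-1},v_k\}$ is safe. The interior vertex $v_2$ of a length-$3$ chain gives the room needed for this exchange. That room is exactly why $q\ge 3$ is needed; with $q=2$ both chain vertices may be forced. After deleting an edge and suppressing, we must verify that the network stays $3$-\cuttable. We plan to argue that the cycles through the deleted edge disappear, while the other cycles keep their chains, because suppression only shortens paths away from chain vertices, or it merges two chains.

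Each rule decreases $|E(U)|+|V(U)|$, so at most polynomially many rules apply, each checkable in polynomial time. When $U$ becomes a tree we compare it with $T$ directly. The main obstacle is the exchange argument in the middle step. We have to show rigorously that, for an outermost blob in a $3$-\cuttable network, a local inspection of $T$ around the chain leaves identifies an edge that some embedding avoids, whenever any embedding exists. We would first try a case analysis on how an arbitrary embedding $\emb$ uses $v_1,v_2,v_3$. Since each $v_i$ must host the path to its leaf $x_i$, at most one of the three chain edges incident to the middle structure can carry a path passing through. We then reroute $\emb$ so that it avoids the canonical edge.
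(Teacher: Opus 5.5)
\textbf{The gap is the core step.} You leave it as a ``key claim'', and in the form you state it the claim is false. The preprocessing is fine: rejecting conflicting pendant structure, collapsing common pendant subtrees and restricting to an outermost blob does the job of the paper's conflicting-split check and branching operation. Your remark that eliminating a non-cut-edge keeps $U$ $3$-cuttable matches Lemma~\ref{lem:removeEdge}(i).

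\textbf{Counterexample.} Let $U$ consist of two vertices $h_1,h_2$ joined by the three paths $(h_1,v_1,v_2,v_3,h_2)$, $(h_1,w_1,w_2,w_3,h_2)$ and $(h_1,y_1,y_2,y_3,h_2)$. Attach leaves $x_i$, $a_i$, $b_i$ pendant at $v_i$, $w_i$, $y_i$, respectively. This $U$ is simple and $3$-cuttable, and $(v_1,v_2,v_3)$ is a maximal chain. Let $T$ be the caterpillar with leaf order $a_1,a_2,a_3,x_3,x_2,x_1,b_1,b_2,b_3$.
\begin{itemize}
\item $U$ displays $T$: omit $\{h_1,w_1\}$ and $\{h_2,y_3\}$.
\item No two of $x_1,x_2,x_3$ form a cherry of $T$.
\item Eliminating $\{v_1,v_2\}$ creates the cherry $\{x_2,x_3\}$, and eliminating $\{v_2,v_3\}$ creates the cherry $\{x_1,x_2\}$.
\end{itemize}
So every embedding uses both chain edges, and your non-cherry rule (delete an end edge) fails. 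Your statement that some embedding avoids an edge of the chain also fails for this chain.

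\textbf{The cherry case is not yet a rule.} ``Delete on whichever side is consistent'' does not say which side. The safe side depends on whether the cherry's parent is embedded at $v_i$ or at $v_{i+1}$. That is decided by how the rest of $T$ is routed, which may be far from the chain. In the example, among the edges at the chain $(w_1,w_2,w_3)$ the only one that can be removed is $\{h_1,w_1\}$, which is not a chain edge. This is exactly the edge the paper's Rule~\ref{rr:3chain} removes.

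\textbf{What is missing is non-local control over embeddings.} The paper supplies it as follows.
\begin{itemize}
\item It drives the reductions from $T$, not from the chains of $U$. By Lemma~\ref{l:subtree-guarantee}, $T$ has a pendant subtree on $\{x,y,z\}$ with cherry $\{x,y\}$, or one on $\{w,x,y,z\}$ with cherries $\{x,y\}$ and $\{w,z\}$.
\item Every embedding maps each edge of $T$ to an entangled path. It also maps the concatenation of two edges at a vertex whose third neighbour is not a leaf to an entangled path (Lemmas~\ref{lem:goodPath} and~\ref{lem:good2path}).
\item In a simple $3$-cuttable network, the entangled path between two vertices is unique and computable (Lemma~\ref{lem:goodPathUniqueness}).
\end{itemize}
Hence the path $P$ realising the cherry is forced. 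An entangled path from $z$, or from the other cherry's path, to an internal vertex $v$ of $P$ locates the attachment point. One then eliminates an edge leaving $P$ at an internal vertex $u\neq v$ (Rules~\ref{rr:3subtree} and~\ref{rr:4subtree}).

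Safety holds because an embedding that attached at $u$ would give, together with the located path, a cycle whose only cut-edge-incident vertices are neighbours of the pendant leaves. $3$-cuttability then either fails outright or forces the configuration of Rule~\ref{rr:3chain}, which is applied first. Your local case analysis on $v_1,v_2,v_3$ has no counterpart to this uniqueness argument, so as it stands the proposal does not give a correct algorithm.
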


The remainder of the section is organized as follows. In Section~\ref{sec:branch}, we describe an operation that, when applied repeatedly to a $3$-\cuttable network and an unrooted phylogenetic tree on the same leaf set, results in a collection of simple $3$-cuttable networks and a collection of smaller unrooted phylogenetic trees. This is followed by establishing properties of particular paths in a $3$-cuttable network $U$ that arise from considering an embedding of a tree that is displayed by $U$ in Section~\ref{sec:entangled}. Subsequently, in Section~\ref{sec:rules}, we detail four reduction rules that are repeatedly used in Section~\ref{sec:alg} to establish a polynomial-time algorithm that solves instances $(T,U)$ of {\sc Unrooted Tree Containment} for when $U$ is a $3$-cuttable network, thereby also establishing Theorem~\ref{thm:3cuttableTC} for any $q\geq 3$.

\subsection{\boldmath Reduction of a $3$-cuttable network to a simple $3$-cuttable network}\label{sec:branch}

We start the section with some additional definitions and call  $X_1|X_2$ an  {\it $X$-split} if $(X_1,X_2)$ is a partition of $X$ (i.e., $X_1\cup X_2=X$, $X_1\cap X_2=\emptyset$, and $|X_i|\geq 1$ with $i\in\{1,2\}$). Furthermore, an $X$-split $X_1|X_2$ is {\it trivial} if $|X_1| = 1$ or $|X_2| = 1$. Lastly two $X$-splits $X_1|X_2$ and $Y_1|Y_2$  are {\it compatible} if at least one of the  four intersections $X_1\cap Y_1$, $X_1\cap Y_2$, $X_2\cap Y_1$, and $X_2 \cap Y_2$  is empty. If none of the four intersections is empty, then $X_1|X_2$ and $Y_1|Y_2$  are {\it incompatible}

Now let $U$ be an unrooted phylogenetic networks on $X$, and let $e$ be a cut-edge of $U$. We say that $e$ {\it induces the split $X_1|X_2$} if $X_1|X_2$ is an $X$-split and all paths in $U$ between  a leaf in $X_1$ and a leaf in $X_2$ pass through $e$. Similarly, we say that $X_1|X_2$ is a \emph{split in $U$} if there exists a cut-edge in $U$ that  induces $X_1|X_2$. Note that  each edge of an unrooted phylogenetic tree induces a split. 

For an unrooted phylogenetic network $U$ on $X$ and an unrooted phylogenetic $X$-tree $T$, we say that $(T,U)$ has a \emph{conflicting split} if there exist a split $X_1|X_2$ in $U$ and a split $Y_1|Y_2$ in $T$ such that $X_1|X_2$ and $Y_1|Y_2$ are  incompatible.  An example of a conflicting split is shown in Figure~\ref{fig:TC4conflictingSplit}.

The next observation follows from the fact that each tree that is displayed by an unrooted phylogenetic network with cut-edge $e$ has a split that is equal to the one induced by $e$.

    \begin{observation}\label{ob:conflictingSplit}
      Let $T$ be an unrooted phylogenetic $X$-tree, and let $U$ be an unrooted phylogenetic network on $X$. If $(T,U)$ has a conflicting split then $U$ does not display $T$. Moreover, it can be decided in polynomial time if $(T,U)$ has a conflicting split.
    \end{observation}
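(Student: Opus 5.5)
The first claim follows by transporting the split of a cut-edge of $U$ to $T$ through an embedding; the second by a direct polynomial-time check. For the first claim we argue by contraposition: suppose $U$ displays $T$, fix an embedding $\emb$ of $T$ in $U$, and let $e=\{a,b\}$ be a cut-edge of $U$ inducing the split $X_1|X_2$. We will show that $X_1|X_2$ is compatible with every split of $T$, so $(T,U)$ has no conflicting split.

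The key step is to show that $X_1|X_2$ is itself a split of $T$. Two facts are needed.

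\begin{enumerate}[(a)]
\item The image of $\emb$, namely the union of all vertices $\emb(v)$ and paths $\emb(\{u,v\})$, is a connected subgraph of $U$ containing every leaf. Since some leaf lies in $X_1$ and some in $X_2$, and every path between them passes through $e$, this subgraph uses $e$.
\item By edge-disjointness (property (v)), $e$ lies on exactly one path $\emb(f)$, for a unique edge $f=\{u,v\}$ of $T$.
\end{enumerate}

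Deleting $f$ from $T$ leaves two components $T_u$ and $T_v$. The image of each component under $\emb$ is connected and avoids $e$, because $e$ appears only in $\emb(f)$. So each image lies entirely on one side of $e$ in $U-e$.

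These images lie on opposite sides. The path $\emb(f)$ runs from $\emb(u)$ to $\emb(v)$ and crosses the cut-edge $e$. Since a path uses $e$ at most once and $e$ separates $U$, the endpoints $\emb(u)$ and $\emb(v)$ lie in different components of $U-e$. Hence the leaves of $T_u$ and the leaves of $T_v$ fall on different sides, so the split of $T$ induced by $f$ is exactly $X_1|X_2$.

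Splits of a tree are pairwise compatible, which is the standard fact about trees. Concretely, for two edges $f$ and $g$ of $T$, some component of $T-f$ and some component of $T-g$ are disjoint, giving an empty intersection. Therefore $X_1|X_2$ is compatible with every split $Y_1|Y_2$ of $T$. This contradicts the assumption of a conflicting split, which proves the first claim.

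For the algorithmic claim, the check proceeds in three steps.

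\begin{enumerate}[(1)]
\item Find all cut-edges of $U$ in polynomial time (for example, test connectivity after deleting each edge). For each cut-edge, compute the leaf bipartition of the two components it creates.
\item For each edge of $T$, compute its split in the same way.
\item For each of the $O(|E(U)|\cdot|E(T)|)$ pairs of splits, test whether all four intersections are nonempty, using $O(|X|)$ set operations per pair.
\end{enumerate}

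This is clearly polynomial. The only point needing care is the argument that $e$ lies on exactly one embedded path and that the endpoints of that path are separated. This uses the observation that a path, being simple, traverses the cut-edge $e$ at most once.
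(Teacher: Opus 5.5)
Your proof is correct and takes the same approach as the paper. The paper justifies the observation only by noting that any tree displayed by $U$ must have the split induced by each cut-edge of $U$, and that splits of a tree are pairwise compatible; you supply the embedding-based details of that fact and the routine polynomial-time check that the paper leaves implicit.
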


\noindent  We may therefore make the following assumption throughout the remainder of Section~\ref{sec:tree-containment}.\\

\noindent {\bf (A1)} An instance $(T,U)$ of {\sc Unrooted Tree Containment} has no conflicting split. In particular, for each cut-edge $e$ in $U$, there exists an edge $e'$ in $T$, such that $e$ and $e'$ induce the same split.\\

\begin{figure}[t]
 \centering
\scalebox{1.2}{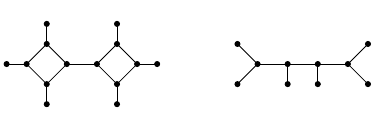}
\caption{The unrooted phylogenetic network $U$ (left) has a split $\{a,b,c\}|\{d,f,g\}$ induced by $e$, while the unrooted phylogenetic tree $T$ (right) has the split $\{a,b,g\}|\{c,d,f\}$. Thus $(T,U)$ has a conflicting split and $U$ does not display $T$.}
\label{fig:TC4conflictingSplit}
\end{figure}

We also have the following result. 

\begin{lemma}\label{l:induce-split}
Let $U$ be a $2$-\cuttable network, and let $e$ be a cut-edge of $U$. Then $e$ induces a split. 
\end{lemma}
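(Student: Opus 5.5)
Deleting the cut-edge $e=\{u,v\}$ splits $U$ into exactly two components $U_u\ni u$ and $U_v\ni v$. Let $X_1=X\cap V(U_u)$ and $X_2=X\cap V(U_v)$. Every path from a leaf of $X_1$ to a leaf of $X_2$ must then pass through $e$. So the only thing to prove is that $X_1|X_2$ is an $X$-split, that is, that both $X_1$ and $X_2$ are non-empty. By symmetry it suffices to show that every component obtained by deleting a cut-edge contains a leaf. I expect this to be the only real obstacle, because for general unrooted phylogenetic networks it is false. A pendant blob attached by a single cut-edge, with all other vertices of degree $3$ and no leaves, is allowed in general. The task is therefore to rule out such leafless pendant pieces using $2$-cuttability.

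The plan is to argue by contradiction. Suppose $U_v$ contains no leaf of $U$. Then every vertex of $U_v$ other than $v$ has degree $3$ in $U_v$, and $v$ has degree $2$ in $U_v$. Consider the tree structure of $U_v$ obtained by contracting each blob of $U_v$ to a single node; this is a tree whose edges are the cut-edges of $U_v$. Choose a ``deepest'' leaf node of this tree, taking $v$'s blob as the root, so that the node is as far as possible from the node containing $v$. If this node were a single vertex not in a blob, it would be incident to exactly one cut-edge. It would then be a degree-$1$ vertex of $U$, hence a leaf, which is a contradiction. So it is a blob $B$ of $U$. If $B$ is not the blob of $v$, then $B$ is incident to exactly one cut-edge of $U$, namely the edge towards its parent node. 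If $B$ is the blob of $v$ itself, then the only cut-edge incident to it is $e$. In either case, exactly one vertex $w$ of $B$ is incident to a cut-edge, and every other vertex of $B$ has all three of its edges inside $B$. Finiteness and degree constraints ensure such a deepest node exists: a leafless finite tree of blobs must end somewhere.

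Now use $2$-cuttability on $B$. Since $B$ is a blob and not a single vertex, it is $2$-edge-connected and contains a cycle. In fact, through any edge of $B$ there is a cycle inside $B$, and $B$ contains a cycle avoiding $w$ or at least passing through vertices other than $w$. Any cycle $C$ of $U$ lying in $B$ has at most one vertex incident to a cut-edge, namely $w$. Hence $C$ contains no path of $2$ vertices that are both incident to cut-edges. This contradicts Definition~\ref{def:q-cuttable} with $q=2$. The contradiction shows that $U_v$ contains a leaf, and by the same argument so does $U_u$. Therefore $X_1|X_2$ is an $X$-split induced by $e$.
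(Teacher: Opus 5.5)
Your proof is correct and follows the same idea as the paper: if one side of $e$ has no leaf, it contains a blob with exactly one incident cut-edge, and any cycle in that blob has at most one vertex incident to a cut-edge, which contradicts $2$-cuttability. Your deepest-node argument in the tree of blobs of $U_v$ makes explicit why such a pendant blob exists, a step the paper only asserts.
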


\begin{proof}
Assume that $U$ has a cut-edge that does not induce a split.  Then there is some blob $B$ in $U$ for which there exists only one cut-edge $uv$ such that $u$ is a vertex of $B$ and $v$ is not a vertex of $B$,
and no vertex of $B$ is adjacent to a leaf. Since $B$ has at least one cycle, it follows from the choice of $B$ that this cycle has at most one incident cut-edge; a contradiction to  $U$ being $2$-\cuttable.
\end{proof}

\begin{lemma}\label{l:distinct}
Let $U$ be a $2$-\cuttable network, and let $e$ and $e'$ be cut-edges of $U$. Then the splits induced by $e$ and $e'$ are distinct.
\end{lemma}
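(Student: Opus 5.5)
We argue by contradiction. Suppose $e=\{a,b\}$ and $e'=\{a',b'\}$ are distinct cut-edges of $U$ that induce the same split $X_1|X_2$; by Lemma~\ref{l:induce-split} both do induce splits. Deleting $e$ and $e'$ from $U$ yields exactly three connected components, because each is a cut-edge and they are distinct. Two of these components contain one endpoint of a single one of the edges each, and the third component $M$ contains one endpoint of $e$ and one endpoint of $e'$. Any leaf in $M$ would lie on one side of $e$ together with one side of the split and on the other side of $e'$ with respect to the split, which contradicts the assumption that the two splits coincide. Concretely, let $X_1$ be the leaves on the $a$-side of $e$. The $a$-side of $e$ is either $M$ together with the far component of $e'$, or just the near component. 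In either case the leaf sets determined by $e$ and $e'$ can agree only if $M$ contains no leaf. Hence $M$ is a connected subgraph containing no leaves of $U$, and exactly two edges of $U$ leave $M$, namely $e$ and $e'$.

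Next we show that $M$ contains a cycle. If $M$ is a single vertex, it has degree $2$ in $U$, which is impossible because all internal vertices have degree $3$. If $M$ has more vertices, every vertex of $M$ is internal and so has degree $3$ in $U$. Counting degrees inside $M$ gives $2|E(M)| = 3|V(M)| - 2$, so $|E(M)| = \tfrac32|V(M)| - 1 \geq |V(M)|$ as soon as $|V(M)|\geq 2$. Hence $M$ is not a tree and contains a cycle $C$.

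The main step is to show that $C$ contains no two adjacent vertices that are both incident to cut-edges, contradicting the $2$-cuttable property (a path on at least $2$ vertices in $C$ with all vertices incident to cut-edges). Every vertex of $C$ is internal. Suppose $v\in V(C)$ is incident to a cut-edge $f$. Since $v$ has two edges on $C$, which are not cut-edges, $f$ is the third edge at $v$. If $f\in\{e,e'\}$, then $v$ is the endpoint of $e$ or $e'$ in $M$, and there are at most two such vertices. If $f\notin\{e,e'\}$, then $f$ lies inside $M$. Removing $f$ splits $M$ into two parts, one of which, $D$, avoids $C$. We then need $D$ to contain no endpoint of $e$ or $e'$, which forces $D$ to be a leafless component attached by the single edge $f$. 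The degree count on $D$ gives $2|E(D)| = 3|V(D)|-1$, which is odd, a contradiction. So the only vertices of $C$ that can be incident to cut-edges are the (at most two) endpoints of $e$ and $e'$ in $M$.

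This handles $q$-cuttability with $q\ge 3$ immediately. The delicate case, and the expected obstacle, is $q=2$, where these two endpoints could be adjacent on $C$. To handle it, we choose $C$ to avoid that configuration. Suppose the endpoints $x$ of $e$ and $y$ of $e'$ in $M$ are adjacent via an edge $g$. Since $M$ has no cut-edges internal to it (the same parity argument applies to any cut-edge in $M$, whichever side it leaves), $M$ is $2$-edge-connected. The vertex $x$ has two neighbours in $M$, namely $y$ and some $z$, and $y$ has $x$ and some $w$. Then $M-g$ is still connected and $x,y$ have degree $1$ in it. Removing $x$ and $y$ gives a graph $M'$ in which all degrees are $3$ except at $z$ and $w$, which have degree $2$ (or a single vertex of degree $1$ if $z=w$). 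In $M'$ the same degree count gives $|E(M')|\ge|V(M')|$ unless $M'$ is tiny, and the small cases are checked directly ($z=w$ would create a cut-edge). So $M'$ contains a cycle, which contains at most one of $x,y$ (in fact neither), and this yields the contradiction.
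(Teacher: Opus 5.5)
Your strategy differs from the paper's: you isolate the leafless middle component $M$ between $e$ and $e'$ and look for a cycle in it with no length-2 chain, whereas the paper shows every blob has at least three incident cut-edges and then contracts blobs to a tree. The strategy is sound, but your proof has a gap. Both your claim that only the endpoints $x$ of $e$ and $y$ of $e'$ on $C$ can be incident to cut-edges, and your assertion that $M$ is $2$-edge-connected, depend on ruling out internal cut-edges of $M$. That step fails in two ways.

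First, the parity claim is false. For a leafless piece $D$ attached by a single edge, $2|E(D)|=3|V(D)|-1$ is even whenever $|V(D)|$ is odd. For example, $K_4$ with one edge subdivided has $5$ vertices, $7$ edges, and all degrees $3$ except the subdivision vertex, so it can hang off a cut-edge. This is exactly why the lemma needs the cuttability hypothesis. Take $M$ to be a triangle $x,y,z$ with $e$ at $x$, $e'$ at $y$, and $z$ joined by a cut-edge to such a subdivided $K_4$. Then $e$ and $e'$ induce the same split. Your argument never uses $2$-cuttability at this point, so it cannot be right. The fix is to apply Lemma~\ref{l:induce-split}, which you already cite: a cut-edge with a leafless side induces no split.

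Second, you do not treat an internal cut-edge $f$ of $M$ that separates $x$ from $y$. Then each side of $f$ inside $M$ has two outgoing edges, the degree count $2|E|=3|V|-2$ gives no contradiction, and $f$ induces the same split as $e$ and $e'$. Here a cycle $C$ in the part containing $x$ can contain $x$ and the endpoint of $f$ as adjacent vertices, so your key claim about $C$ fails.

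The repair is a minimality choice. Among all pairs of distinct cut-edges inducing the same split, take one minimising $|V(M)|$. Such an $f$ would give the pair $(e,f)$ with a strictly smaller middle component, a contradiction. With both repairs, $M$ is bridgeless and your last paragraph correctly finds a cycle avoiding $x$ and $y$. In effect you would then be reproving the paper's key fact that no blob of a 2-cuttable network has fewer than three incident cut-edges. The paper instead finishes by contracting blobs and using the corresponding property of trees.
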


\begin{proof}
We first show that each blob~$B$ of~$U$ has at least three incident cut-edges. If~$B$ consists of a single cycle, then it has at least three incident cut-edges because otherwise it would have parallel edges. If~$B$ does not consist of a single cycle, then, since~$U$ is $2$-\cuttable, $B$ has at least two disjoint length-$2$ chains and hence at least~$4$ incident cut-edges. Consider the leaf-labeled tree~$T$ obtained from~$U$ by contracting each blob into a single vertex. It is well-known that any two distinct cut-edges of $T$ induce different splits~\cite{semple2003phylogenetics}. Hence, any two distinct cut-edges of~$U$ induce different splits.
\end{proof}

\paragraph{Branching operation.}
Let $T$ and $U$ be a an unrooted phylogenetic $X$-tree and a $3$-\cuttable network on $X$, respectively, such that $U$ is not simple. Let $e=\{u,v\}$ be a non-trivial cut-edge of $U$.  
Since $U$ is $3$-\cuttable, it follows from Lemma~\ref{l:induce-split}, that $e$ induces a split $X_1|X_2$.
Then the \emph{branching operation} on $e$ produces two new instance $(T_1,U_1)$ and $(T_2,U_2)$ of {\sc Unrooted Tree Containment} as follows: First obtain $U_1$ and $U_2$ from $U$ by deleting $e$ and adding new edges $\{u,x_1\}$ and $\{v,x_2\}$ such that $x_1,x_2\notin X$. Second, let $e'=\{u',v'\}$ be the edge in $T$ that induces the split $X_1|X_2$. Then obtain $T_1$ and $T_2$ from $T$ be deleting $e'$ and adding new edges $\{u',x_1\}$ and $\{v',x_2\}$. Without loss of generality, we may assume that $X_1\cup\{x_1\}$ (resp. $X_2\cup\{x_2\}$) is the leaf set of  $T_1$ and $U_1$ (resp. $T_2$ and $U_2$). An example of the branching operation is illustrated in Figure~\ref{fig:TC4branching}. The next lemma, whose proof is straightforward and omitted, shows that deciding if $U$ displays $T$ is equivalent to deciding if $U_1$ displays $T_1$ and $U_2$ displays $T_2$. 

    \begin{lemma}\label{ob:nonConflictingSplit}
    Let $T$ and $U$ be a an unrooted phylogenetic $X$-tree and a $3$-\cuttable network on $X$, respectively, such that $U$ is not simple. Furthermore, let $T_1$ and $T_2$, and $U_1$ and $U_2$ be the two unrooted phylogenetic trees and the two unrooted phylogenetic networks obtained from $T$ and $U$, respectively, by applying the branching operation on a non-trivial cut-edge of $U$. Then $U$ displays $T$ if and only if $U_1$ displays $T_1$ and $U_2$ displays $T_2$.
    \end{lemma}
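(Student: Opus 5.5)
We prove the two directions separately, working throughout with embeddings $\emb$ as defined above. By Lemma~\ref{l:induce-split}, $e=\{u,v\}$ induces a split $X_1|X_2$ of $U$. By (A1), $T$ has an edge $e'=\{u',v'\}$ inducing the same split. We orient notation so that $u$ and $u'$ lie on the $X_1$-side, and $v$ and $v'$ on the $X_2$-side.

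\emph{($\Rightarrow$)} Suppose $\emb$ is an embedding of $T$ in $U$.

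First we show that the path $\emb(e')$ uses $e$, and that no other path $\emb(f)$ does. Pick $a\in X_1$ and $b\in X_2$. The union of the images of the edges on the $a$--$b$ path in $T$ is an $a$--$b$ walk in $U$, so it must cross $e$. Hence some edge $f$ of $T$ has $e$ on $\emb(f)$. Since $e$ is a cut-edge, the endpoints of $\emb(f)$ lie on opposite sides of $e$. Suppose some vertex $w$ of $T$ on the $X_1$-side of $e'$ had $\emb(w)$ on the $X_2$-side of $e$. The pendant subtree of $T$ containing $w$ then reaches leaves of $X_1$. Since $w$ is internal and $T$ is binary, three edge-disjoint images leave $\emb(w)$, each continuing towards a leaf. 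At most one of them can cross $e$, and two lead back into $X_1$; this gives two edge-disjoint crossings of $e$, a contradiction. We expect this paragraph to be the main care point.

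So every vertex of $T$ on the $X_1$-side of $e'$ is mapped to the $X_1$-side of $e$, and similarly for $X_2$. It follows that exactly one edge of $T$ has its image crossing $e$, namely $e'$. Since images are edge-disjoint, $\emb(e')$ is the only path using $e$.

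Now define $\emb_1$ on $T_1$. On vertices and edges of the $X_1$-side, set $\emb_1=\emb$. Set $\emb_1(x_1)=x_1$. Let $\emb_1(\{u',x_1\})$ be the prefix of $\emb(e')$ from $\emb(u')$ to $u$, extended by the new edge $\{u,x_1\}$. Properties (i)--(v) are inherited directly, since everything used lives in the $X_1$-component of $U-e$. The construction of $\emb_2$ for $T_2$ is symmetric.

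\emph{($\Leftarrow$)} Suppose $\emb_1$ and $\emb_2$ are embeddings of $T_1$ in $U_1$ and of $T_2$ in $U_2$. Because $x_1$ is a leaf adjacent only to $u$, the path $\emb_1(\{u',x_1\})$ ends with the edge $\{u,x_1\}$; likewise $\emb_2(\{v',x_2\})$ ends with $\{v,x_2\}$. Define $\emb$ on $T$ as the union of $\emb_1$ and $\emb_2$. For $e'$, concatenate $\emb_1(\{u',x_1\})$ with its last edge removed, then $e$, then the reverse of $\emb_2(\{v',x_2\})$ with its last edge removed.

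The two sides of $U-e$ are disjoint, so vertex images are distinct and edge images are pairwise edge-disjoint. The concatenated walk for $e'$ is a path, because its two halves lie in different components of $U-e$. Hence $\emb$ is an embedding of $T$ in $U$.
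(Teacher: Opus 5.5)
Your proof is correct. The paper omits this proof as straightforward, and your argument is the routine one it has in mind: show that $e$ lies on exactly one image path, namely $\emb(e')$, so the embedding splits along the two sides of $U-e$; conversely, glue $\emb_1$ and $\emb_2$ back together across $e$. One detail in the ($\Leftarrow$) direction deserves an explicit sentence. For $f\neq\{u',x_1\}$, the path $\emb_1(f)$ is edge-disjoint from $\emb_1(\{u',x_1\})$ and so avoids the new pendant edge $\{u,x_1\}$; only because of this are the restricted images genuine paths in $U$.
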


In addition to (A1) we may also make the next assumption in light of Lemma~\ref{ob:nonConflictingSplit} throughout the remainder of Section~\ref{sec:tree-containment}.\\

\noindent {\bf (A2)} Let $(T,U)$ be an instance of {\sc Unrooted Tree Containment} such that $U$ is $3$-\cuttable. Then $U$ is simple. In particular, each cut-edge in $U$ is trivial and, therefore, induces a trivial split.

\begin{figure}[t]
 \centering
\scalebox{1.2}{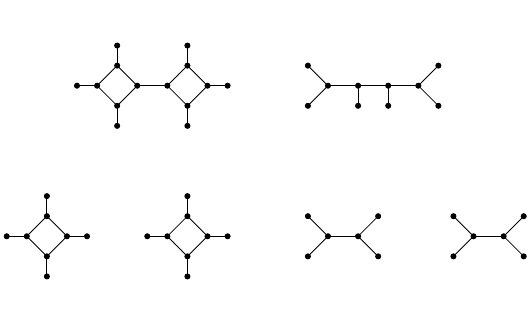}
\caption{Top: A $3$-\cuttable network $U$ with cut-edge $e$ (left) and an unrooted phylogenetic tree $T$ (right) such that $(T, U)$ has no conflicting splits.
     Bottom: The $3$-\cuttable networks $U_1$ and $U_2$ (left) and the unrooted phylogenetic trees $T_1$ and $T_2$ (right) resulting from applying the branching operation on $e$.}
\label{fig:TC4branching}
\end{figure}

\subsection{Entangled paths}\label{sec:entangled}

We next investigate properties of paths of an embedding of an unrooted phylogenetic tree in an unrooted phylogenetic network $U$. Let $P=(v_1,v_2,\ldots,v_k)$ be a path in $U$. The vertices  $v_2,v_3,\ldots,v_{k-1}$ are called {\it internal vertices} of $P$. Furthermore, we say that $P$ is {\it entangled}  if none of its internal vertices is incident to a cut-edge that is not an edge of $P$.

\begin{lemma}\label{lem:goodPath}
Let $U$ be a $3$-\cuttable network
on $X$, and let $T$ be an unrooted phylogenetic $X$-tree that is displayed by $U$. Furthermore, let $\emb$ be an embedding of $T$ in $U$.
Then for each edge $\{u,w\}$ in $T$, the path $\emb(\{u,w\})$ in $U$ is entangled.
\end{lemma}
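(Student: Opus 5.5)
The plan is a direct contradiction argument. It uses only that the image of an embedding is a subdivision of $T$, together with Lemma~\ref{l:induce-split}. Fix an edge $\{u,w\}$ of $T$ and write $P=\emb(\{u,w\})$. Suppose, for a contradiction, that $P$ has an internal vertex $v$ and a cut-edge $e=\{v,y\}$ of $U$ that is not an edge of $P$. Since $v$ is internal to $P$, it has degree at least $2$, so it is not a leaf and therefore has degree exactly $3$. Two of its incident edges lie on $P$, so $e$ is the third.

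\textbf{Step 1: both sides of $e$ contain leaves.} A $3$-\cuttable network is in particular $2$-\cuttable, because a path of at least three vertices that are each incident to a cut-edge contains such a path with two vertices. By Lemma~\ref{l:induce-split}, the cut-edge $e$ therefore induces an $X$-split $X_1|X_2$. Both parts are non-empty, so each of the two components of $U-e$ contains a leaf of $U$.

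\textbf{Step 2: the embedding uses $e$.} Let $H$ be the subgraph of $U$ formed by the union of all vertices $\emb(t)$ and all paths $\emb(f)$. By properties (i)--(v) of an embedding, $H$ is a subdivision of $T$. Hence $H$ is connected and contains every leaf in $X=X_1\cup X_2$. A connected subgraph meeting both components of $U-e$ must contain $e$. So $e\in\emb(f)$ for some edge $f$ of $T$, and $f\neq\{u,w\}$ because $e\notin P$.

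\textbf{Step 3: degree count at $v$.} In $H$, the vertex $v$ is incident to its two edges on $P$ and also to $e$, so $v$ has degree $3$ in $H$. However, $v$ is an internal vertex of the path $P=\emb(\{u,w\})$. In a subdivision of $T$, the vertices that are internal to a subdivided edge have degree exactly $2$. To make this concrete, I will argue that by edge-disjointness (property (v)) the paths $\emb(f)$ with $f\neq\{u,w\}$ avoid both edges of $P$ at $v$, so any such path through $v$ must use $e$ and hence end at $v$. Then $v=\emb(t)$ for some vertex $t$ of $T$ that is an endpoint of $f$. Since $\emb$ is injective on $V(T)$ and the internal vertices of $P$ are not images of tree vertices (otherwise $H$ would not be a subdivision of $T$), this is a contradiction. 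Therefore no such $e$ exists, and $P$ is entangled.

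The main point needing care is Step~3: making precise that an internal vertex of $\emb(\{u,w\})$ cannot coincide with some $\emb(t)$, and cannot be the endpoint of another path. I will derive this from the fact that $H$ is a subdivision of $T$ in which the images of the edges of $T$ are exactly the suppressed chains, so $H$-degree $2$ characterises their internal vertices. Step~1, the only place where the cuttability hypothesis enters, is immediate from Lemma~\ref{l:induce-split}.
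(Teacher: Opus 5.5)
Your Steps 1 and 2, and the first half of Step 3, follow the paper's proof. Lemma~\ref{l:induce-split} forces the cut-edge $e$ onto some $\emb(f)$ with $f\neq\{u,w\}$. Since the other two edges at $v$ lie on $P$, property (v) forces $\emb(f)$ to end at $v$, so $v=\emb(t)$ for an endpoint $t$ of $f$.

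The gap is the final inference. You conclude by asserting that internal vertices of $P$ are not images of vertices of $T$, because $H$ is a subdivision of $T$ whose subdivided edges are exactly the paths $\emb(f)$. That structural fact is not among properties (i)--(v), and you never derive it. At this point in the argument it is precisely what remains to be shown. Injectivity of $\emb$ on $V(T)$ does not give it, and the fact genuinely depends on $U$ being binary. If $v$ had degree $5$, properties (i)--(v) could all hold with $P$ passing through $v=\emb(t)$, so your current Step~3 is asserting the conclusion rather than proving it.

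The missing step is the degree count with which the paper finishes:
\begin{itemize}
\item The vertex $t$ is not a leaf. Otherwise property (ii) would give $v=\emb(t)=t$ of degree $1$, whereas $v$ is internal to a path.
\item Also $t\notin\{u,w\}$, because $v$ is not an endpoint of $P$. Hence $t$ has an incident edge $e''\notin\{f,\{u,w\}\}$.
\item By injectivity, $\emb(e'')$ is a path with at least one edge starting at $v$, so it uses one of the three edges at $v$.
\item Two of these edges lie on $\emb(\{u,w\})$ and the third, $e$, lies on $\emb(f)$. This contradicts property (v).
\end{itemize}
If you replace your appeal to the subdivision structure with this argument, your proof coincides with the paper's.
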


\begin{proof}
Let $e=\{u,w\}$. Assume for a contradiction that $\emb(e)$ is not entangled. Then $\emb(e)$ has some internal vertex $v$ that is incident to a cut-edge $\{v,v'\}$ of $U$ such that $\{v,v'\}$ is not an edge of $\emb(e)$. Since $U$ displays $T$ and, by Lemma~\ref{l:induce-split}, every cut-edge in $U$ induces a split, the edge $\{v,v'\}$ is in the image of $\emb(e')$ for some edge $e'$ in $T$. Since $\emb$ satisfies Property (v) in the definition of an embedding and $U$ is binary, $v$ is the first or last vertex of $\emb(e')$. Let $t$ be the vertex of $T$ such that $\emb(t)=v$. Clearly, $t$ is incident with $e'$ and $t\notin X$. Hence, there exists an edge $e''$ in $T$ that is also incident with $t$ and $e''\notin\{e,e'\}$. As $U$ is binary this implies that $\emb(e'')$ and $\emb(e)$ have a common edge, thereby contradicting Property (v) in the definition of an embedding. 
\end{proof}

The next lemma shows that entangled paths are unique in simple $3$-\cuttable networks.

\begin{lemma}\label{lem:goodPathUniqueness}
Let $U$ be a simple $3$-\cuttable networks, and let $u$ and $v$ be two vertices of $U$. Then there is at most one entangled path between $u$ and $v$ in $U$, and such a path can be found in polynomial time if it exists.
\end{lemma}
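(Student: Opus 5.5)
The plan is to use (A2) and the definition of $q$-\cuttable to show that the union of two distinct entangled $u$–$v$ paths would contain a cycle with too few vertices incident to cut-edges. Since $U$ is simple, every cut-edge is trivial. So a vertex is incident to a cut-edge exactly when it is a leaf or adjacent to a leaf.

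\emph{Step 1: which path vertices can touch a cut-edge.} Let $P$ be an entangled path between $u$ and $v$, and let $w$ be an internal vertex of $P$ incident to a cut-edge $\{w,x\}$ with $x$ a leaf. By entanglement $\{w,x\}$ lies on $P$. A leaf has degree $1$, so it cannot be an internal vertex of $P$; hence $x\in\{u,v\}$ and $w$ is the second or penultimate vertex of $P$. Define $u^*$ to be the unique neighbour of $u$ if $u$ is a leaf, and $u^*=u$ otherwise; define $v^*$ analogously. Then every entangled $u$–$v$ path starts with the forced edge $\{u,u^*\}$ (if $u\neq u^*$) and ends with $\{v^*,v\}$. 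Its subpath between $u^*$ and $v^*$ has no internal vertex incident to any cut-edge.

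\emph{Step 2: uniqueness.} Suppose $P\neq Q$ are two entangled $u$–$v$ paths, and let $P^*$, $Q^*$ be their $u^*$–$v^*$ subpaths, which are also distinct. The case $u^*=v^*$ is immediate: the only candidates are $(u,u^*,v)$ or degenerate paths, so I would dispose of it first. Otherwise $P^*\cup Q^*$ contains a cycle $C$. Every vertex of $C$ is either an internal vertex of $P^*$ or $Q^*$, hence not incident to a cut-edge, or one of $u^*,v^*$. Thus $C$ contains at most two vertices incident to a cut-edge, so $C$ has no path of $3$ such vertices. This contradicts $U$ being $3$-\cuttable.

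\emph{Step 3: algorithm.} Form the graph $H$ from $U$ by deleting every leaf other than $u,v$ and every leaf-adjacent vertex other than $u^*,v^*$. Any entangled $u$–$v$ path lies in $H$; indeed, by the Step~2 argument restricted to $H$ minus $\{u,v\}$, $H$ contains at most one $u^*$–$v^*$ path, with an analogous check on the forced edges at the ends. So a BFS in $H$ finds a candidate path, and checking entanglement directly takes linear time. If the check fails or no path exists, then no entangled path exists. The main obstacle is the bookkeeping in Step~1: correctly isolating the few path vertices that are allowed to touch cut-edges, including when $u$ or $v$ is itself a leaf or leaf-adjacent. After that, the cycle argument is short.
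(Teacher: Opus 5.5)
Your proposal is correct and follows essentially the same route as the paper. Uniqueness comes from observing that the union of two distinct entangled $u$--$v$ paths contains a cycle with at most two vertices incident to cut-edges (namely $u^*$ and $v^*$), contradicting $3$-cuttability; the algorithm then searches for a $u$--$v$ path in the graph obtained by deleting the vertices an entangled path cannot use. Your $u^*,v^*$ bookkeeping is slightly tidier than the paper's case split, and the uniqueness claim in Step 3 is cleanest stated as: every cycle of $H$ has at most two vertices incident to cut-edges, so $H$ is a forest.
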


\begin{proof}
Assume for a contradiction that there are two different entangled paths $P_1$ and $P_2$ between $u$ and $v$ in $U$, i.e., there exists at least one vertex of $P_1$ that is not a vertex of $P_2$. 
As $P_1$ and $P_2$ are two different paths between $u$ and $v$, the subgraph of $U$ that contains precisely the edges of $P_1$ and $P_2$ has a cycle $C$. If each of $u$ and $v$ is a leaf and $C$ contains the neighbor of $u$ and the neighbor of $v$, then exactly two vertices of $C$ are incident to cut-edges. In all other cases, $C$ is incident to at most one cut-edge. Taken together, this gives a contradiction to $U$ being $3$-\cuttable.

It remains to show that an entangled path between $u$ and $v$ in $U$ can be found in polynomial time if it exists. Indeed, let $E_{u,v}$ be the set of all cut-edges of $U$ that are not incident to $u$ or $v$, and let $U_{u,v}$ be obtained from $U$ by deleting all vertices incident to edges in $E_{u,v}$. Then $U$ has an entangled path between $u$ and $v$ if and only if there is a path between $u$ and $v$ in $U_{u,v}$. Since any path between $u$ and $v$ in $U_{u,v}$ is an entangled path in $U$ and such a path in $U_{u,v}$ can be found in polynomial time,
the lemma now follows.
\end{proof}

Let $U$ be a simple $3$-\cuttable network, and let $T$ be an unrooted phylogenetic tree such that $T$ is displayed by $U$. Let $\emb$ be an embedding of $T$ in $U$.
From Lemmas~\ref{lem:goodPath} and~\ref{lem:goodPathUniqueness} it might at first glance appear that  $\emb$  is the unique embedding of $T$ in $U$ since $\emb(\{u,v\})$ is an entangled path between $\emb(u)$ and $\emb(v)$ for any edge $\{u,v\}$ in $T$ and there is at most one entangled path between any pair of vertices in $U$. However, this is not necessarily the case because two different embeddings $\emb$ and $\emb'$ of $T$ in $U$ can have $\emb(w) \neq \emb'(w)$ for a non-leaf vertex $w$ in $T$.
Nevertheless, we can constrain the behavior of possible embeddings of $T$ in $U$ in a useful way, as the next lemma shows.

\begin{lemma}\label{lem:good2path}
Let $U$ be a simple $3$-\cuttable network on $X$, and let $T$ be an unrooted phylogenetic $X$-tree such that $T$ is displayed by $U$. Let $p$ be an internal vertex in $T$ with neighbors $q$, $u$, and $v$. Furthermore, let $\emb$ be an embedding of $T$ in $U$. If $q$ is not a leaf, then the concatenation
of $\emb(\{u,p\})$ and $\emb(\{p,v\})$ is an entangled path between $\emb(u)$ and $\emb(v)$.
\end{lemma}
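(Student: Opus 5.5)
By Lemma~\ref{lem:goodPath}, both $\emb(\{u,p\})$ and $\emb(\{p,v\})$ are entangled paths. By Property~(v) of an embedding they are edge-disjoint. Since $U$ is binary and $\emb(p)$ has degree~3 (it is not a leaf), two edge-disjoint paths that both end at $\emb(p)$ can only meet at $\emb(p)$ if they also share no other vertex; otherwise some internal vertex of degree~3 would be used by both paths, forcing them to share an edge. More precisely, any common vertex other than $\emb(p)$ that is internal to both paths would need degree at least~4. A common vertex that is an endpoint of one path ($\emb(u)$ or $\emb(v)$) also leads to a contradiction, since $\emb(u)\neq\emb(v)$ and the images of the other edges of $T$ at $\emb(u)$ or $\emb(v)$ would lack room. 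So the concatenation $P$ is a genuine path from $\emb(u)$ to $\emb(v)$, with $\emb(p)$ as one of its internal vertices.

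It remains to check entangledness at every internal vertex of $P$. The internal vertices of $P$ are the internal vertices of the two pieces, which are fine by Lemma~\ref{lem:goodPath}, together with $\emb(p)$. So the whole statement reduces to showing that $\emb(p)$ is not incident to a cut-edge that lies off $P$. The two edges of $P$ at $\emb(p)$ are the first edges of the two pieces. The third edge at $\emb(p)$ is the first edge of $\emb(\{p,q\})$, because $\emb(p)$ has degree~3 and three edge-disjoint paths leave it. Hence I must show that this first edge $f$ of $\emb(\{p,q\})$ is not a cut-edge.

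This is the key step. By (A2), $U$ is simple, so every cut-edge is trivial, meaning it is incident to a leaf. Suppose $f=\{\emb(p),y\}$ is a cut-edge. Since $\emb(p)$ is an internal vertex of $U$, $y$ must be a leaf $x\in X$. But then the path $\emb(\{p,q\})$ starts with the edge to the leaf $x$ and cannot continue beyond it. It therefore ends at $x$, so $\emb(q)=x$, and injectivity together with $\emb(x)=x$ forces $q=x$. This contradicts the assumption that $q$ is not a leaf. Alternatively, one can avoid (A2) and use Lemma~\ref{l:induce-split}: a cut-edge induces a split, so one side of it contains leaves, and the path would have to end at a leaf. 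The simple-network route is cleaner.

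I expect the main obstacle to be the careful verification that the concatenation is a path, that is, that the two pieces share only $\emb(p)$, rather than the entangledness itself.
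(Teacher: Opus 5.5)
Your proof is correct and follows essentially the same route as the paper: both pieces are entangled by Lemma~\ref{lem:goodPath}, so everything reduces to the third edge at $\emb(p)$. That edge lies on $\emb(\{p,q\})$, and it cannot be a cut-edge, because in a simple network it would be trivial and force $q$ to be a leaf; your check that the two pieces meet only at $\emb(p)$ is a detail the paper takes for granted. Drop the suggested alternative via Lemma~\ref{l:induce-split}, however: without simplicity that edge can be a non-trivial cut-edge with $\emb(q)$ an internal vertex beyond it, so the path need not end at a leaf, and the statement genuinely fails in that case.
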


\begin{proof}
    Let $(u_1,u_2,\dots, u_s)$ be the path $\emb(\{p,u\})$, and let $(v_1,v_2,\dots, v_t)$ be the path $\emb(\{p,v\})$, where $u_1=v_1=\emb(p)$, $u_s = \emb(u)$, and $v_t = \emb(v)$.
    Thus, the concatenation of these two paths is the path $$P=(u_s, u_{s-1}, \dots, u_1=v_1, v_2, \dots, v_t).$$
    Let $C$ be the set of cut-edges of $U$ that are not incident to $\emb(u)$ or $\emb(v)$. Since $U$ is simple, recall that if $\emb(u)$ (resp. $\emb(v)$) is incident to a cut-edge, then $\emb(u)$ (resp. $\emb(v)$) is a leaf. By Lemma~\ref{lem:goodPath}, $\emb(\{p,u\})$ and $\emb(\{p,v\})$ are entangled paths in $U$. Moreover, as $U$ is simple, it follows from the definition of an entangled path that no vertex in $\{u_2,u_3,\dots, u_{s-1},v_2,v_3,\dots, v_{t-1}\}$ is incident to a cut-edge in $C$.
    Thus, it suffices to show that the edge $e$ in $U$ that is incident to $\emb(p)$ and not in $P$ is not a cut-edge.  This follows from the facts that $U$ is simple, $p$ is not a leaf, and $e$ is an edge of the path $\emb(\{p,q\})$.
\end{proof}

\subsection{Reduction rules}\label{sec:rules}

In this section, we establish four reduction rules. Given an instance $(T,U)$ of {\sc Unrooted Tree Containment}, each reduction rule correctly  determines that $(T,U)$ is a yes-instance, a no-instance, or obtains a smaller instance $(T,U')$ from $(T,U)$ by deleting an edge in $U$ such that $(T,U')$ is a yes-instance if and only if $(T,U)$ is such an instance. For each reduction rule, we first state the rule and subsequently establish its correctness.

\begin{rrule}\label{rr:trivial}
Let $(T,U)$ be an instance of {\sc Unrooted Tree Containment}. If $U$ has at most three leaves, then return \textsc{Yes}.
\end{rrule}

\begin{lemma} \label{l:trivial}
Let $(T,U)$ be an instance of {\sc Unrooted Tree Containment}. If $U$ has at most three leaves, then $(T,U)$ is a yes-instance.

\end{lemma}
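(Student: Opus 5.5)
The argument is short: when $|X|\le 3$ there is only one unrooted phylogenetic $X$-tree, so it is enough to show that $U$ contains some subgraph that is a subdivision of a tree with leaf set $X$.

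First, the tree $T$ is determined by $X$. For $|X|=1$ it is the single leaf. For $|X|=2$ it is the single edge $\{x,y\}$. For $|X|=3$ it is the star with one internal vertex adjacent to all three leaves. Recall that $U$ is connected and its leaves are exactly the elements of $X$.

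Second, take a spanning tree $S$ of $U$. Every element of $X$ has degree~1 in $U$, so it is still a leaf of $S$. Repeatedly delete from $S$ any leaf that is not in $X$. The result is a tree $S'$ whose leaf set is exactly $X$ (for $|X|\ge 2$), and all of its vertices have degree at most~3 because $U$ is binary. We check each size:
\begin{itemize}
\item If $|X|=2$, then $S'$ has two leaves, so it is a path from $x$ to $y$. This is a subdivision of $T$.
\item If $|X|=3$, then $S'$ has three leaves, so it has exactly one vertex of degree~3 and every other internal vertex has degree~2. Suppressing the degree-2 vertices gives the three-leaf star, which is $T$.
\item If $|X|=1$, then $U$ is the single vertex $x$. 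A binary network with one leaf and at least one internal vertex cannot exist: its blob would attach to the rest only through one cut-edge, and this is excluded in the same way as in the proof of Lemma~\ref{l:induce-split}. Alternatively, this case is trivial because $U$ has vertex $x$, which already displays $T$.
\end{itemize}

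In every case $U$ has a subgraph that is a subdivision of $T$, so $U$ displays $T$ and $(T,U)$ is a yes-instance. The only point needing any care is the degenerate one-leaf case. The pruning argument does not depend on the $3$-\cuttable assumption, so it can be used directly.
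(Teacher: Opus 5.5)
Your proof is correct and takes the same route as the paper: there is only one $X$-tree when $|X|\le 3$, and $U$ displays it; your pruned spanning tree spells out what the paper simply calls ``clear.'' The one slip is your first justification in the case $|X|=1$. This lemma assumes no cuttability, and without it one-leaf networks with internal vertices do exist, e.g.\ $K_4$ with one edge subdivided and a leaf attached to the subdivision vertex. Your alternative argument, that the single vertex $x$ is already a copy of $T$, is correct and is all you need.
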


\begin{proof}
Let $X$ be the leaf set of $T$ and $U$ with $|X|\leq 3$. Regardless of the size of $X$, there exists only one unrooted phylogenetic $X$-tree. Clearly $T$ is displayed by $U$ and $(T,U)$ is a yes-instance. 
\end{proof}

The upcoming three reduction rules involve the removal of non-cut-edges, which we formally define next. Let $U$ be an unrooted phylogenetic network, and let $e=\{u,v\}$ be a non-cut-edge of $U$. Let $G$ be the graph obtained from $U$ by deleting $e$ and suppressing the two resulting degree-$2$ vertices. We say that $G$ has been obtained from $U$ by {\it eliminating} $e$ (see Figure~\ref{fig:TCeliminateEdge} for an illustration of this operation).
    
\begin{figure}[t]
 \centering
\scalebox{1.2}{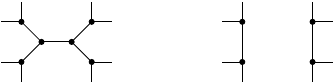}
\caption{An unrooted phylogenetic network $U$, where $e=\{u,v\}$ is a non-cut-edge (left) and the graph $G$ obtained from $U$ by eliminating $e$.}
\label{fig:TCeliminateEdge}
\end{figure}

\begin{lemma}\label{lem:removeEdge}
Let $T$ be an unrooted phylogenetic $X$-tree, let $U$ be a $3$-\cuttable network on $X$, and let $e$ be a non-cut-edge of $U$. Let $U'$ be the graph obtained from $U$ by eliminating $e$. Then each of the following hold:
    \begin{enumerate}[{\rm (i)}]
        \item  $U'$ is a $3$-\cuttable network on $X$;
        \item if $U'$ displays $T$, then $U$ displays $T$; and 
        \item if $U$ displays $T$ and there is an embedding $\emb$ of $T$ in $U$  such that, for each edge $f$ in $T$, $\emb(f)$ does not contain $e$, then $U'$ displays $T$.
    \end{enumerate}
\end{lemma}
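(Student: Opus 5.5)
The three parts are handled separately; (ii) and (iii) are direct manipulations of embeddings, and (i) is the main obstacle.

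For (i), let $e=\{u,v\}$, with $u$ having other neighbours $a,b$ and $v$ having other neighbours $c,d$. Eliminating $e$ deletes $e$ and replaces the paths $(a,u,b)$ and $(c,v,d)$ by edges $\{a,b\}$ and $\{c,d\}$. First I check that $U'$ is an unrooted phylogenetic network on $X$:
\begin{itemize}
\item it is connected, because $e$ is not a cut-edge;
\item the leaves are unchanged, and every remaining internal vertex still has degree $3$;
\item it is simple. A parallel edge, say $\{a,b\}$ already present, would make $a,b,u$ a triangle. That triangle is a cycle containing $u$ and $e$ is a non-cut-edge, so the triangle is incident to only one cut-edge besides possibly those at $a,b$. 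Hence $(a,u,b)$ cannot contain a chain of length $3$, since $u$ is not incident to a cut-edge. This contradicts $3$-cuttability. The case $a=b$ is impossible because $U$ is simple.
\end{itemize}

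Next I show that $3$-cuttability survives. Every cycle $C'$ of $U'$ corresponds to a cycle $C$ of $U$, obtained by re-subdividing $\{a,b\}$ and/or $\{c,d\}$ with $u$ or $v$. Deleting $e$ can only turn non-cut-edges into cut-edges, never the reverse. So every vertex of $C$ other than $u,v$ that is incident to a cut-edge in $U$ is still incident to a cut-edge in $U'$. The vertices $u,v$ themselves are not incident to a cut-edge in $U$, because the edge $e$ at each of them lies on a cycle. The edges at $u$ then also lie on cycles: $e$ together with a path, which exists since $U-e$ is connected. Therefore a $3$-chain of $C$ avoids $u$ and $v$, and it remains a path of at least three cut-edge-incident vertices in $C'$. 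One subtlety is that the definition says \emph{incident to a cut-edge}, so only monotonicity is needed, not equality.

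For (ii), take an embedding $\emb'$ of $T$ in $U'$. Replace every occurrence of the edge $\{a,b\}$ in a path $\emb'(f)$ by $(a,u,b)$, and likewise $\{c,d\}$ by $(c,v,d)$. Vertex images stay in $V(U)$ and distinct. Paths remain paths, because $u$ and $v$ appear only when that edge is used. Edge-disjointness is preserved, since each of the new edges $\{a,u\},\{u,b\}$ is used only by the unique $f$ whose path used $\{a,b\}$.

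For (iii), $\emb$ avoids $e$, so $u$ is used by some path only as an internal vertex, passing through $(a,u,b)$. The vertex $u$ cannot be $\emb(t)$ for an internal vertex $t$ of $T$: that would require three edge-disjoint paths leaving $u$, one of them through $e$. Similarly $u$ is not a leaf. So contracting $(a,u,b)$ to $\{a,b\}$, and $(c,v,d)$ to $\{c,d\}$, yields valid paths in $U'$. Edge-disjointness is inherited from $\emb$.
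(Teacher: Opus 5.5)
Your plan for all three parts is the paper's: lift a cycle $C'$ of $U'$ to a cycle $C$ of $U$ avoiding $e$ and transfer a $3$-chain; re-subdivide $\{a,b\}$ and $\{c,d\}$ for (ii); contract $(a,u,b)$ and $(c,v,d)$ for (iii). Parts (ii) and (iii) are fine.

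In (i), however, your claim that $u$ and $v$ are not incident to a cut-edge of $U$ is false in general, and your justification does not establish it. A cycle through $e$ (namely $e$ plus a $u$--$v$ path in $U-e$) leaves $u$ through only one of $\{u,a\}$ and $\{u,b\}$, so the other edge may be a cut-edge. For example, let $U$ be a single cycle with a pendant leaf at every cycle vertex; this network is $3$-\cuttable. If $e$ is a cycle edge, then both $u$ and $v$ are adjacent to leaves.

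What you need, and what the paper uses, is the conditional statement. If $u$ lies on $C$, then, since $C$ avoids $e$, it passes through $u$ along $\{a,u\}$ and $\{u,b\}$. So these two edges lie on a cycle, and together with $e$ being a non-cut-edge, $u$ has no incident cut-edge. If $u\notin V(C)$, there is nothing to prove. With this fix, your conclusion goes through: the $3$-chain of $C$ avoids $u$ and $v$, so its edges are edges of $U'$ lying on $C'$.

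Relatedly, when $u\notin V(C)$ the cut-edge at a chain vertex may be, say, $\{u,b\}$. In $U'$ this edge is replaced by $\{a,b\}$, which is still a cut-edge, so your monotonicity remark holds once suppression is accounted for.

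Your simplicity check is something the paper omits, but it misses one case. If $\{a,b\}=\{c,d\}$, the two suppressions create two parallel copies of $\{a,b\}$. The same kind of argument excludes this. In that case $u,v,a$ form a triangle, and every edge at $u$ and at $v$ lies on one of the triangles $uva$ or $uvb$. Hence neither $u$ nor $v$ is incident to a cut-edge, so the triangle $uva$ contains no $3$-chain, contradicting that $U$ is $3$-\cuttable.
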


\begin{proof}

Let $e=\{u,v\}$, let $u_1$ and $u_2$ denote the two neighbors of $u$ in $U$ that are not $v$,
and similarly let $v_1$ and $v_2$ denote the two neighbors of $v$ in $U$ that are not $u$. Observe that $\{u_1,u_2\}$ and $\{v_1,v_2\}$ are the only edges in $U'$ that are not edges in $U$.

First, we show that $U'$ is $3$-\cuttable. Let $C'$ be a cycle in $U'$. Let $C$ be the cycle in $U$ that is obtained from $C'$ by replacing any occurrence of the edge $\{u_1,u_2\}$ with the two edges $\{u_1,u\}$ and $\{u,u_2\}$, and replacing any occurrence of the edge $\{v_1,v_2\}$ with the two edges $\{v_1,v\}$ and $\{v,v_2\}$.
By construction, $C$~does not contain $e$. Moreover, if $C$ contains $u$ (resp. $v$), then $u$ (resp. $v$) is not incident to a cut-edge in~$U$.
As $U$ is $3$-\cuttable, it now follows that $C$ contains a path $(w_1,w_2,w_3)$ such that $\{u,v\}\cap\{w_1,w_2,w_3\}=\emptyset$ and each element in $\{w_1,w_2,w_3\}$ is incident to a cut-edge. By construction of $U'$ from $U$, the vertices in $\{w_1,w_2,w_3\}$ and their incident cut-edges also exist in $U'$. 
It now follows that $U'$ is a $3$-\cuttable network. This establishes (i).

Next, suppose that $U'$ displays $T$. Then $U'$ contains a subgraph $G'$ that is isomorphic to a subdivision of $T$. Obtain $G$ from $G'$ by replacing any occurrence of of the edge $\{u_1,u_2\}$ with the two edges $\{u_1,u\}$ and $\{u,u_2\}$, and similarly replacing any occurrence of  the edge $\{v_1,v_2\}$ with the two edges $\{v_1,v\}$ and $\{v,v_2\}$. It is straightforward to check that $G$ is also a subdivision of $T$ and a subgraph of $U$, thereby establishing (ii).

Lastly, suppose that $U$ displays $T$ and that there exists an embedding $\emb$ of $T$ in $U$  such that, for each edge $f$ in $T$, $\emb(f)$ does not contain $e$. Then, for each vertex $u'$ in $T$, we 
have $\emb(u')\ne u$ because $u'$ is either a leaf or has degree $3$. It follows that there either exists an edge $\{u',v'\}$ in $T$ such that $u$ is an internal vertex of $\emb(\{u,'v'\})$ or $u$ is not a vertex of $\emb(\{u',v'\})$ for any edge $\{u',v'\}$ in $T$. Now, if $u$ is an internal vertex of $\emb(\{u',v'\})$ for some edge $\{u',v'\}$ in $T$, then $\emb(\{u',v'\})$ also contains $u_1$ and $u_2$. Moreover, by replacing the edges $\{u_1,u\}$ and $\{u,u_2\}$  with $\{u_1,u_2\}$ we obtain a path in $U'$ between $\emb(u')$ and $\emb(v')$.
Turning to $v$, we can use an analogous argument to deduce that if there exists an edge $\{p',q'\}$ in $T$ such that  $v$ is  an internal vertex of $\emb(\{p',q'\})$, then $\emb(\{p',q'\})$ also contains $v_1$ and $v_2$, and by replacing the edges $\{v_1,v\}$ and $\{v,v_2\}$  with $\{v_1,v_2\}$ we obtain a path in $U'$ between $\emb(p')$ and $\emb(q')$. It now follows that we can  transform $\emb$ into an embedding of $T$ in $U'$. This establishes (iii) and, therefore the lemma.            
\end{proof}

\begin{rrule}\label{rr:3chain}
Let $(T,U)$ be an instance of {\sc Unrooted Tree Containment}. 
Let $X$ be the leaf set of $T$ and $U$ with $|X| > 3$. 
Suppose that $U$ has three leaves $x$, $y$, and $z$ and a path $P=(v_1,v_2,v_3,v_4)$
such that $\{v_2,x\}$, $\{v_3,y\}$, and $\{v_4,z\}$ are cut-edges in $U$ and no edge connecting two vertices of $P$ is a cut-edge. Furthermore, suppose that $T$ has a pendant subtree with leaf set $\{x,y,z\}$ such that $\{x,y\}$ is a cherry. Obtain an instance $(T,U')$ of {\sc Unrooted Tree Containment} from $(T,U)$ by eliminating $e=\{v_1,v_2\}$ in~$U$.
\end{rrule}

 \begin{figure}[t]
 \centering
\scalebox{1.2}{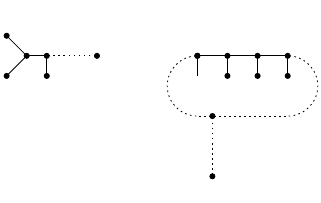}
\caption{Setup as described in the proof of Lemma~\ref{l:3chain} to establish correctness of Rule~\ref{rr:3chain}. Solid (resp. dotted) lines indicate edges (resp. paths). Parts of $T$ and $U$ are omitted for clarity.}
\label{fig:3chainExample}
\end{figure}

\begin{lemma}\label{l:3chain}
Let $(T,U)$ and $(T,U')$ be two instances of {\sc Unrooted Tree Containment} such that $U$ is a $3$-\cuttable network and $(T,U')$ has been obtained from $(T,U)$ by a single application of Rule~\ref{rr:3chain}. Then $(T,U)$ is a yes-instance if and only if $(T,U')$ is a yes-instance. 
\end{lemma}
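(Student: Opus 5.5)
The "if" direction is immediate from Lemma~\ref{lem:removeEdge}(ii): $U'$ is obtained from $U$ by eliminating the non-cut-edge $e=\{v_1,v_2\}$, so if $U'$ displays $T$, then $U$ displays $T$. (Lemma~\ref{lem:removeEdge}(i) also tells us $U'$ is again $3$-\cuttable, which we may note for later use of the rule.) The substance is the converse: given that $U$ displays $T$, we construct an embedding $\emb$ of $T$ in $U$ whose paths avoid $e$, and then apply Lemma~\ref{lem:removeEdge}(iii).

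For the converse, we may assume as in (A2) that $U$ is simple, so the only cut-edges are the pendant edges to leaves. Fix an embedding $\emb$ of $T$ in $U$. Let $p$ be the parent of the cherry $\{x,y\}$ in $T$ and let $p'$ be the neighbour of $p$ with $p'$ adjacent to $z$. Let $q$ be the third neighbour of $p'$, which exists because $|X|>3$. Since $x,y$ are leaves whose unique edges are $\{v_2,x\}$ and $\{v_3,y\}$, the paths $\emb(\{p,x\})$ and $\emb(\{p,y\})$ end with these edges. Similarly, $\emb(\{p',z\})$ ends with $\{v_4,z\}$. By Lemma~\ref{lem:goodPath} each of these paths is entangled, so none of $v_2,v_3,v_4$ can be an internal vertex of an image path other than the one ending at its own leaf.

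The key step is a case analysis on where $\emb(p)$ and $\emb(p')$ lie. The edge $\{v_2,v_3\}$ is the unique entangled path between $v_2$ and $v_3$ (Lemma~\ref{lem:goodPathUniqueness}), and by Lemma~\ref{lem:good2path} the concatenation of $\emb(\{x,p\})$ and $\emb(\{p,y\})$ is entangled between $x$ and $y$. We therefore conclude that this concatenation is exactly $(x,v_2,v_3,y)$, so $\emb(p)\in\{v_2,v_3\}$.

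If $\emb(p)=v_3$, then $v_3$ has degree $3$, with neighbours $v_2,y,v_4$. The path $\emb(\{p,x\})$ uses $\{v_3,v_2\}$, and $\emb(\{p,p'\})$ must use $\{v_3,v_4\}$. Since $v_4$ carries the cut-edge to $z$ and cannot be internal to $\emb(\{p,p'\})$, it follows that $\emb(p')=v_4$. The remaining edge at $v_4$ is then used by $\emb(\{p',q\})$, and $e$ is never used, because $v_2$ only has $x$ and $v_3$ in use.

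If $\emb(p)=v_2$, then $\emb(\{p,p'\})$ leaves $v_2$ through $v_1$ and so uses $e$. We repair this by re-embedding: set $\emb'(p)=v_3$ and $\emb'(p')=v_4$, with $\emb'(\{p,x\})=(v_3,v_2,x)$, $\emb'(\{p,p'\})=(v_3,v_4)$, and $\emb'(\{p',z\})=(v_4,z)$. For $\emb'(\{p',q\})$ we take the path from $v_4$ to $\emb(q)$ obtained from the old routing. In the old embedding, $\emb(\{p,p'\})$ runs from $v_2$ via $v_1$ to $\emb(p')$, and $\emb(p')$ connects onward to both $z$ (hence eventually reaching $v_4$) and $\emb(q)$. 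Replace these by the walk from $v_4$ back along the old $\emb(\{p',z\})$ to $\emb(p')$ and then along $\emb(\{p',q\})$, shortcutting to a path. All edges used are freed edges from the old images of $\{p,p'\}$, $\{p',z\}$ and $\{p',q\}$, so edge-disjointness with the other images is preserved.

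The main obstacle is verifying this rerouting carefully. We need that the old $\emb(\{p',z\})$ actually arrives at $v_4$ through its non-$v_3$ neighbour rather than through $v_3$; that holds since $v_3$ is occupied by $\{v_2,v_3\},\{v_3,y\}$ in the old embedding. We also need that the resulting walk is a path or can be shortened to one without reintroducing $e$. Once $\emb'$ is confirmed to be a valid embedding avoiding $e$, Lemma~\ref{lem:removeEdge}(iii) finishes the proof.
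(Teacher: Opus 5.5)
Your proposal is correct and is essentially the paper's argument: setting $\emb'(p)=v_3$ and $\emb'(p')=v_4$, discarding the old image of $\{p,p'\}$ (the only image path through $e$) and adding the free edge $\{v_3,v_4\}$ is exactly the paper's passage from $U_T$ to $U_T^*$. The one difference is that you locate $\emb(p)\in\{v_2,v_3\}$ through the entangled-path lemmas, which need simplicity via (A2). The paper instead gets $\{v_2,v_3\}\in U_T$ and $\{v_3,v_4\}\notin U_T$ directly from the cherry $\{x,y\}$ and $|X|>3$, so its argument works for any $3$-cuttable $U$. Also, no shortcutting is needed in your rerouting: the old images of $\{p',z\}$ and $\{p',q\}$ meet only at $\emb(p')$, so their concatenation is already a path.
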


\begin{proof}
The proof is illustrated in Figure~\ref{fig:3chainExample}, and we use the same terminology as in Rule~\ref{rr:3chain}.
By Lemma~\ref{lem:removeEdge}(i), recall that $U'$ is a $3$-\cuttable network.  Suppose that $U$ displays $T$. Let 
$U_T$ be a subgraph of $U$ that is a subdivision of $T$. If $U_T$ does not contain $e$, then $U'$ displays $T$ by Lemma~\ref{lem:removeEdge}(iii). So  we may assume $U_T$ contains $e$. As $T$ has the cherry $\{x,y\}$, $U_T$ also contain the edge $\{v_2,v_3\}$ but not the edge $\{v_3,v_4\}$. Let $\ell \in X \setminus \{x,y,z\}$. Furthermore, let $w$ be the unique vertex in $U_T$ such that there are three pairwise edge-disjoint paths from $w$ to each of $\ell$, $x$, and $z$ in $U_T$. We may or may not have $w=v_1$. Let $P'$ be the path from $w$ to $v_2$ in $U_T$. Let $U^*_T$ be obtained from $U_T$ by deleting the edges and internal vertices of $P'$, and adding the edge $v_3v_4$. Then $U^*_T$ is also a subdivision of $T$.
Importantly, $U^*_T$ does not use $e$. Thus we are in the previous case and so again by Lemma~\ref{lem:removeEdge}(iii) $U'$ displays $T$, as required. Conversely, it follows from Lemma~\ref{lem:removeEdge}(ii) that $U$ displays $T$ if $U'$ does. This completes the proof of the lemma.
\end{proof}

Following on from the statement of Rule~\ref{rr:3chain} and provided that $U$ is simple and $3$-\cuttable, note that this rule always applies if $|X|=4$. Indeed, as $U$ only has trivial splits, it follows that $U$ is an unrooted phylogenetic network that precisely contains four internal vertices that form  a cycle and each such vertex is adjacent to a leaf. Moreover, any three of the four leaves form a pendant subtree in any unrooted phylogenetic $X$-tree. Thus Rule~\ref{rr:3chain} applies. Rules~\ref{rr:trivial} and~\ref{rr:3chain} allow us to make the following assumption throughout the remainder of Section~\ref{sec:tree-containment}.\\

\noindent {\bf (A3)} Let $(T,U)$ be an instance of {\sc Unrooted Tree Containment}. Each of $T$ and $U$ has at least five leaves.

\begin{rrule}\label{rr:3subtree}
Let $(T,U)$ be an instance of {\sc Unrooted Tree Containment} such that $U$ is simple. 
Let $X$ be the leaf set of $T$ and $U$ with $|X| >4$. 
Suppose that $T$ has a pendant subtree with leaf set $\{x,y,z\}$ such that $\{x,y\}$ is a cherry. Then apply one of the following three:
\begin{enumerate}[{\rm (I)}]
\item If there is no entangled path between $x$ and $y$ in $U$, then return  \textsc{No}.
\item If there exists an entangled path $P$ between $x$ and $y$ in $U$, but there exists no entangled path between $z$ and an internal vertex of $P$ in $U$, then return  \textsc{No}.
\item If neither (i) nor (ii) apply, let $P$ be the unique entangled path between $x$ and $y$ in $U$, and let $P'$ be an entangled path between $z$ and an internal vertex $v$ on $P$ such that $P$ and $P'$ are edge-disjoint. Then obtain an instance $(T,U')$ of {\sc Unrooted Tree Containment} from $(T,U)$ by eliminating  an edge $e=\{u,w\}$ in~$U$, where $u$ is a vertex of $P$ with $u\ne v$ and $w$ is not a vertex of $P$. 
\end{enumerate}
\end{rrule}

\noindent Suppose that neither (I) nor (II) applies. To see that $u$ exists, assume that $v$ is the only internal vertex of $P$. Then $\{x,y\}$ is a cherry in $U$. In particular, $U$ has cut-edge that is not incident with a leaf, a~contradiction to the assumption that $U$ is simple. Moreover, $e$ is not a cut-edge because $P$ is entangled. Hence, (III) is well defined and always applies if neither (I) nor (II) applies.

\begin{lemma}\label{l:3subtree}
Let $(T,U)$ be an instance of {\sc Unrooted Tree Containment} such that Rule~\ref{rr:3chain} does not apply, $U$ is a simple $3$-\cuttable network on $X$, and $T$ has a pendant subtree on three leaves $x$, $y$, and $z$ such that $\{x,y\}$ is a cherry in $T$. Then the following two statements hold:
\begin{enumerate}[{\rm (i)}]
\item If Rule~\ref{rr:3subtree}(I) or (II) applies to $(T,U)$, then $(T,U)$ is a no-instance.
\item  If Rule~\ref{rr:3subtree}(III) applies to $(T,U)$, then $(T,U)$ is a yes-instance if and only if $(T,U')$ is a yes-instance. 
\end{enumerate}
\end{lemma}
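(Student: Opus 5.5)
Part (i) follows from the entangled-path lemmas. Suppose $U$ displays $T$ with embedding $\emb$, and let $p$ be the common neighbor of $x$ and $y$ in $T$ and $q$ its third neighbor. Because $|X|>4$ by (A3), $q$ is not a leaf. So Lemma~\ref{lem:good2path} shows that the concatenation $P^*$ of $\emb(\{x,p\})$ and $\emb(\{p,y\})$ is an entangled path between $x$ and $y$. This rules out (I). By Lemma~\ref{lem:goodPathUniqueness}, $P^*$ is the unique entangled path $P$. Now let $q$ have neighbors $p$, $z$ and some non-leaf $r$. Applying Lemma~\ref{lem:good2path} at $q$ with non-leaf neighbor $r$, the concatenation of $\emb(\{z,q\})$ and $\emb(\{q,p\})$ is entangled. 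It ends at $\emb(p)$, which is an internal vertex of $P$ because $\emb(p)\notin\{x,y\}$. This contradicts (II). Hence $(T,U)$ is a no-instance whenever (I) or (II) applies.

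For (ii), the direction ``$U'$ displays $T$ implies $U$ displays $T$'' is Lemma~\ref{lem:removeEdge}(ii). For the converse, fix an embedding $\emb$ of $T$ in $U$. By the argument in (i), $\emb$ maps the cherry onto $P$, and the path $Q=\emb(\{p,q\})$ leaves $P$ at $v^*=\emb(p)$. I then aim to show that $v^*=v$, where $v$ is the vertex chosen in Rule~\ref{rr:3subtree}(III). Two facts give this. First, the concatenation $R$ of $\emb(\{z,q\})$ and $Q$ is an entangled path from $z$ to $v^*$ that is edge-disjoint from $P$. Second, $P'$ is an entangled path from $z$ to $v$ that is also edge-disjoint from $P$. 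If $v^*\neq v$, then $R$, $P'$ and the subpath of $P$ between $v$ and $v^*$ contain a cycle. I expect every vertex of this cycle to be incident to no cut-edge, except possibly the neighbor of $z$. Entangledness gives this for internal vertices of all three paths. The junction vertices $v$ and $v^*$ are internal on $P$, so their cut-edges would have to lie on $P$; since $U$ is simple, the only such cut-edges are the ones at $x$ and $y$, and the cycle avoids those. That contradicts $3$-cuttability, as in the proof of Lemma~\ref{lem:goodPathUniqueness}.

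With $v^*=v$, the edge $e=\{u,w\}$ has $u$ on $P$ with $u\neq v$ and $w\notin P$. The edges of $U$ at $u$ are its two $P$-edges and $e$. If some $\emb(f)$ used $e$, then $\emb(f)$ would pass through $u$ (as $u$ is not an endpoint image: $\emb^{-1}(u)$ would be a degree-$3$ vertex, and the only one mapped onto $P$ is $p$, with $\emb(p)=v\neq u$). So $\emb(f)$ would use a $P$-edge at $u$, which is already used by $\emb(\{x,p\})$ or $\emb(\{p,y\})$. If instead $u$ is an endpoint of $\emb(f)$, it must be a leaf $x$ or $y$, whose only edge is on $P$. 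Either way property (v) would be violated, so no $\emb(f)$ contains $e$, and Lemma~\ref{lem:removeEdge}(iii) finishes the proof.

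The main obstacle is the cycle argument showing $v^*=v$. In particular, I have to handle carefully the case where $R$ and $P'$ share a prefix from $z$ before they diverge, so that I extract a genuine cycle and count its cut-edge-incident vertices correctly, and I have to rule out a $3$-chain arising at $z$'s neighbor. I do not expect to need the hypothesis that Rule~\ref{rr:3chain} does not apply.
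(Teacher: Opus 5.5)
\textbf{Gap in (ii): the cycle argument for $v^*=v$.} Part (i) is correct and matches the paper. Your plan for (ii) is also the paper's plan: force the cherry vertex to sit at $v$ by building a cycle from $P$, $P'$ and the path from $z$ to $\emb(p)$. The gap is the claim that no vertex of that cycle except the neighbour of $z$ is incident to a cut-edge.

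Write $a_x,a_y,a_z$ for the neighbours of $x,y,z$ in $U$. Then $a_x$ and $a_y$ are the first and last internal vertices of $P$, and they \emph{are} incident to cut-edges. That the cycle avoids the edges $\{x,a_x\}$ and $\{y,a_y\}$ is irrelevant: $3$-cuttability counts vertices incident to cut-edges, and a cut-edge never lies on a cycle anyway. Nothing prevents $v$ or $v^*$ from equal $a_x$ or $a_y$. In that case the cycle can contain $a_x,a_y,a_z$ consecutively, and $3$-cuttability gives no contradiction.

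This is exactly where the hypothesis that Rule~\ref{rr:3chain} does not apply is needed. Without it, the claim $v^*=v$ is false. Here is a concrete counterexample:
\begin{enumerate}[(a)]
\item Let $a_xa_y$ and $a_ya_z$ be edges, and let $w$ and $b$ be the third neighbours of $a_x$ and $a_z$.
\item Add a vertex $g$ adjacent to $w$ and $b$, and join each of $w,b,g$ to a new vertex $k$ by a path through three leaf-bearing vertices. This is a simple $3$-cuttable network, a subdivided $K_4$.
\item Delete $\{a_y,a_z\}$ and the edges from $w$ and $b$ into their paths to $k$. What remains is a subdivision of a tree $T$ in which $\{x,y,z\}$ is pendant with cherry $\{x,y\}$. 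Its embedding has $\emb(p)=a_x$, $\emb(q)=g$, $\emb(\{p,q\})=(a_x,w,g)$ and $\emb(\{q,z\})=(g,b,a_z,z)$.
\item Rule~\ref{rr:3subtree}(III) may pick $P'=(z,a_z,a_y)$, giving $v=a_y$ and $e=\{a_x,w\}$. Yet this embedding has $\emb(p)=a_x\neq v$ and uses $e$.
\end{enumerate}

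\textbf{How to repair it (this is what the paper does).} The only cut-edge-incident vertices of your cycle lie in $\{a_x,a_y,a_z\}$. So $3$-cuttability forces these three to form a chain on the cycle, which in particular forces $\{v,v^*\}=\{a_x,a_y\}$. There are two cases.
\begin{enumerate}[(a)]
\item If the middle vertex of the chain is $a_x$ or $a_y$, then $a_xa_y$ is an edge and the middle vertex is adjacent to $a_z$. So Rule~\ref{rr:3chain} applies (after possibly swapping $x$ and $y$), contrary to hypothesis.
\item If the middle vertex is $a_z$, then $a_z$ is adjacent to both $a_x$ and $a_y$, so $R=(z,a_z,v^*)$ and hence $\emb(q)=a_z$. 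Then $\emb(\{q,r\})$ must use the edge $\{a_z,v\}$ and continue along a $P$-edge at $v$. Both $P$-edges at $v$ are already used by $\emb(\{x,p\})$ and $\emb(\{p,y\})$, which is a contradiction.
\end{enumerate}
Once $v^*=v$ is established this way, your final paragraph correctly finishes the proof via Lemma~\ref{lem:removeEdge}(iii).
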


\begin{proof}
In $T$, let $p$ be the common neighbor of $x$ and $y$, let $q$ be the common neighbor of $p$ and $z$, and let $r$ be the neighbor of $q$ that is not $p$ or $z$. Since $T$ has at least five leaves, $r$ is not a leaf. This setup is shown in the left-hand side of Figure~\ref{fig:3subtreeExample}.

First, we show that (i) holds. Assume that 
$(T,U)$ is a yes-instance. Then there exists an embedding $\emb$ of $T$ in $U$.
By Lemma~\ref{lem:good2path}, the concatenation of $\emb(\{x,p\})$ and $\emb(\{p,y\})$ is an entangled path $P^*$ from $x$ to $y$ in $U$. Hence Rule~\ref{rr:3subtree}(I) does not apply. 
By Lemma~\ref{lem:goodPathUniqueness}, $P^*$ is unique. 
Furthermore, again by Lemma~\ref{lem:good2path}, the concatenation of $\emb(\{p,q\})$ and $\emb(\{q,z\})$ is an entangled path between $\emb(p)$ and $z$ in $U$, and $\emb(p)$ is an internal vertex of $P^*$. Hence Rule~\ref{rr:3subtree}(II) does not apply either. 
This establishes (i). 

 \begin{figure}[t]
 \centering
\scalebox{1.2}{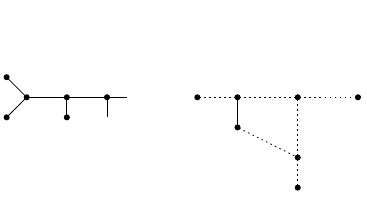}
\caption{Setup as described in the proof of Lemma~\ref{l:3subtree} to establish correctness of Rule~\ref{rr:3subtree}. Solid (resp. dotted) lines indicate edges (resp. paths). Parts of $T$ and $U$ are omitted for clarity.}
\label{fig:3subtreeExample}
\end{figure}

Next, we show that (ii) holds. This part of the proof is illustrated in Figure~\ref{fig:3subtreeExample}, and we use the same terminology as in the statement of Rule~\ref{rr:3subtree}(III). Since it follows from Lemma~\ref{lem:removeEdge}(ii) that $U$ displays $T$ if $U'$ does, it remains to show that $U'$ displays $T$ if $U$ does. Suppose that $U$ displays $T$. For the purpose of a contradiction, assume that $U'$ does not display $T$. Let $\emb$ be an embedding of $T$ in $U$. By Lemma~\ref{lem:removeEdge}(iii), we may assume that there exists an edge $e'$ in $T$, such that $\emb(e')$ contains $e$. Furthermore,  it follows from Lemmas~\ref{lem:goodPathUniqueness} and~\ref{lem:good2path} that the concatenation of $\emb(\{x,p\})$ and $\emb(\{p,y\})$ is equal to $P$. Hence, all three edges that are incident with $u$ in $U$ are in the image of $\emb$ and, in particular, $\emb(p)=u$ and $e'=\{p,q\}$. Let $P''$ be  the concatenation of $\emb(\{p,q\})$ and $\emb(\{q,z\})$. Note that $P''$ has at least two edges. Then, again by Lemma~\ref{lem:good2path} and recalling that $r$ is an internal vertex, $P''$ is an entangled path between $u$ and $z$ in $U$. Moreover, by the definition of $\emb$, it follows that $P$ and $P''$ are edge-disjoint.  Now consider the subgraph $G$ of $U$ that is induced by the edges of the three entangled paths $P$, $P'$ and $P''$. Since $z$ is a terminal vertex of $P'$ and $P''$, these two paths meet. It follows that $G$ is connected, contains a cycle $C$, and there exists no cut-edge $st$ in $U$ such that $s$ is a vertex of $G$ and $t$ is not a vertex of $G$. Recall that $U$ is simple and $3$-\cuttable. As $C$ is also a cycle of $U$, there exists a path of three vertices $s_1,s_2,s_3$ in $U$ such that each $s_i$ with $i\in\{1,2,3\}$ is incident with a cut-edge $\{s_i\ell_i\}$ and $\{\ell_1,\ell_2,\ell_3\}=\{x,y,z\}$.  Since the edges of $P$ and $P''$ are in the image of $\emb$ and $|X|>4$, there exists an internal vertex on $P''$ whose three incident edges are all in the image of $\emb$ and none of these three edges is incident with $x$ or $y$. Thus $\ell_2\ne z$. Up to interchanging the roles of $x$ and $y$, it now follows that Rule~\ref{rr:3chain} applies to $(T,U)$; a contradiction. This establishes (ii) and, therefore, the lemma.
\end{proof}

Following on from the statement of Rule~\ref{rr:3subtree}, observe that, if $|X|$=5, then $T$ always induces a split $X_1|X_2$ with $|X_1|=3$ and $|X_2|=2$. It follows that $T$ has a pendant subtree with three leaves and Rule~\ref{rr:3subtree} applies. Thus between Rules~\ref{rr:trivial},~\ref{rr:3chain}, and~\ref{rr:3subtree}, we may make the following assumption throughout the remainder of Section~\ref{sec:tree-containment}.\\

\noindent {\bf (A4)} Let $(T,U)$ be an instance of {\sc Unrooted Tree Containment}. Each of $T$ and $U$ has at least six leaves.

\begin{rrule}\label{rr:4subtree}
Let $(T,U)$ be an instance of {\sc Unrooted Tree Containment} such that $U$ is simple. 
Let $X$ be the leaf set of $T$ and $U$ with $|X| >5$. 
Suppose that $T$ has a pendant subtree with leaf set $\{w,x,y,z\}$ such that $\{x,y\}$ and $\{w,z\}$ are two cherries. Then apply one of the following four:
\begin{enumerate}[{\rm (I)}]
\item If there is no entangled path between $x$ and $y$ or no entangled path between $w$ and $z$ in $U$, then return  \textsc{No}.
\item If there exists an entangled path $P_1$ between $x$ and $y$ and an entangled path $P_2$ between $w$ and $z$ in $U$, but $P_1$ and $P_2$ are not edge-disjoint, then return \textsc{No}.
\item If there exists an entangled path $P_1$ between $x$ and $y$ and an entangled path $P_2$ between $w$ and $z$ in $U$ such that  $P_1$ and $P_2$ are edge-disjoint, but there exists no entangled path from an internal vertex of $P_1$ to an internal vertex of $P_2$ that contains at least two edges and is pairwise edge-disjoint from $P_1$ and $P_2$, then return \textsc{No}.
\item If none of (I)--(III) applies, let $P_1$ be an entangled path between $x$ and $y$ in $U$, let $P_2$ be an entangled path between $w$ and $z$ in $U$, and let $P_3$ be an entangled path between an internal vertex $v_1$ of $P_1$ and an internal vertex $v_2$ of $P_2$ such that $P_3$ contains at least two edges and is pairwise edge-disjoint from $P_1$ and $P_2$. Then obtain an instance $(T,U')$ of {\sc Unrooted Tree Containment} from $(T,U)$ by eliminating an edge $e=\{u,w\}$ in $U$, where $u$ is a vertex of $P_1$ with $u\ne v_1$ and $w$ is not a vertex of $P_1$.
\end{enumerate}
\end{rrule}

\noindent Suppose that none of (I)--(III) applies. Similar to the paragraph following the statement of Rule~\ref{rr:3subtree}, it follows that $e$ exists and (IV) applies.

\begin{lemma}\label{l:4subtree}
Let $(T,U)$ be an instance of {\sc Unrooted Tree Containment} such that Rule~\ref{rr:3chain} does not apply, $U$ is a simple $3$-\cuttable network on $X$, and $T$ has a pendant subtree on four leaves $w$, $x$, $y$, and $z$ such that $\{x,y\}$ and $\{w,z\}$ are two cherries in $T$. Then the following two statements hold:
\begin{enumerate}[{\rm (i)}]
\item If Rule~\ref{rr:4subtree}(I), (II), or (III) applies to $(T,U)$, then $(T,U)$ is a no-instance.
\item  If Rule~\ref{rr:4subtree}(IV) applies to $(T,U)$, then $(T,U)$ is a yes-instance if and only if $(T,U')$ is a yes-instance. 
\end{enumerate}
\end{lemma}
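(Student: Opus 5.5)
The plan follows the proof of Lemma~\ref{l:3subtree}. In $T$, let $p$ be the common neighbor of $x$ and $y$, and $p'$ the common neighbor of $w$ and $z$. Because the pendant subtree has leaf set $\{w,x,y,z\}$, $p$ and $p'$ are adjacent in $T$. Let $q$ be the third neighbor of $p$ and $q'$ the third neighbor of $p'$, so $q=p'$ and $q'=p$. Since $|X|>5$, no neighbor of $p$ or $p'$ other than the four leaves is a leaf. Hence Lemma~\ref{lem:good2path} applies at $p$ and at $p'$.

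For (i), suppose $(T,U)$ is a yes-instance with embedding $\emb$. By Lemma~\ref{lem:good2path}, concatenating $\emb(\{x,p\})$ and $\emb(\{p,y\})$ gives an entangled path between $x$ and $y$. By Lemma~\ref{lem:goodPathUniqueness} it equals $P_1$, and likewise $P_2$ comes from $\emb(\{w,p'\})$ and $\emb(\{p',z\})$. So (I) does not apply. Property~(v) of an embedding makes $P_1$ and $P_2$ edge-disjoint, so (II) does not apply. Next, $\emb(\{p,p'\})$ is entangled by Lemma~\ref{lem:goodPath}. It joins the internal vertex $\emb(p)$ of $P_1$ to the internal vertex $\emb(p')$ of $P_2$ and is edge-disjoint from both. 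If it had a single edge, then $\emb(p)$ and $\emb(p')$ would be adjacent. The cycle formed by that edge together with parts of $P_1$ and $P_2$, or some small cycle of this kind, would then violate $3$-cuttability or force Rule~\ref{rr:3chain} to apply. I expect this single-edge exclusion to need care: at worst I would rely on the fact that an edge between two vertices incident to no cut-edge contradicts the chain structure. So (III) does not apply either.

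For (ii), the direction from $U'$ to $U$ is Lemma~\ref{lem:removeEdge}(ii). For the converse, take an embedding $\emb$ of $T$ in $U$ and assume, via Lemma~\ref{lem:removeEdge}(iii), that some $\emb(e')$ contains $e=\{u,w\}$ (this $w$ is the vertex named in the rule, not the leaf). As in (i), $P_1$ is the image of the $x$--$p$--$y$ path. So all three edges at $u$ are used, which gives $\emb(p)=u\neq v_1$ and $e'=\{p,p'\}$. Likewise $P_2$ is the image of the $w$--$p'$--$z$ path, so $\emb(\{p,p'\})=:P''$ is an entangled path from $u$ to an internal vertex of $P_2$. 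Now consider the subgraph $G$ formed by $P_1,P_2,P_3,P''$. It is connected and has a cycle $C$ through $u,v_1$ and parts of $P_1,P_3,P_2,P''$. The only cut-edges leaving $G$ are those to $w,x,y,z$.

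The main obstacle is showing that the $3$-chain on $C$ forced by $3$-cuttability leads to a contradiction. The chain must consist of three consecutive neighbors of leaves among $\{w,x,y,z\}$. These lie on $P_1$ or $P_2$, since $P_3$ and $P''$ are entangled with at least two edges (that of $P''$ following from (i)). Such a chain consists of vertices whose leaves are, say, $x,y$ and one of $w,z$ adjacent along $U$. This yields a path $(v_1',v_2',v_3',v_4')$ as in Rule~\ref{rr:3chain} with a pendant triple $\{x,y,w\}$ or $\{x,y,z\}$ (after relabeling). But no pendant triple of $T$ contains both cherries' leaves in that way unless it is a cherry plus a leaf of the other cherry. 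That is not pendant in $T$, so I would instead argue directly. A chain of three leaf-neighbors forces a vertex of $P_1$ to be adjacent to a vertex of $P_2$ via a non-cut-edge. Since $P''$ and $P_3$ both run between internal vertices of $P_1$ and $P_2$, this chain edge together with $P''$ or $P_3$ yields a cycle with at most two consecutive leaf-neighbors, contradicting $3$-cuttability. I would settle this case analysis, including whether Rule~\ref{rr:3chain} is needed, by drawing the possible configurations.
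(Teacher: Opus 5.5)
There is a genuine gap at the start, and it carries through both parts. Your claim that $p$ and $p'$ are adjacent is false. If they were, $p$ would have neighbours $x,y,p'$ and $p'$ would have neighbours $w,z,p$, so $T$ would have only four leaves, contradicting $|X|>5$. In fact your $q$ and $q'$ are the same vertex, distinct from $p$ and $p'$, and its remaining neighbour $r$ is not a leaf precisely because $|X|>5$. Consequently the part of the embedding joining $\emb(p)$ and $\emb(p')$ is not the image of a single tree edge, and your treatment of the ``at least two edges'' condition in Rule~\ref{rr:4subtree}(III) has no valid basis:
\begin{itemize}
\item a single edge between $\emb(p)$ and $\emb(p')$, together with parts of the vertex-disjoint paths $P_1$ and $P_2$, forms no cycle at all;
\item the fallback claim, that an edge between two vertices not incident to cut-edges contradicts the chain structure, is simply false.
\end{itemize}
The missing step is to apply Lemma~\ref{lem:good2path} at $q$, which is allowed because $r$ is not a leaf. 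The concatenation of $\emb(\{p,q\})$ and $\emb(\{q,p'\})$ is then an entangled path from the internal vertex $\emb(p)$ of $P_1$ to the internal vertex $\emb(p')$ of $P_2$, and it is edge-disjoint from both. It has at least two edges automatically, since it passes through $\emb(q)\notin\{\emb(p),\emb(p')\}$. This disposes of (III) in part (i). In part (ii) it gives $e'=\{p,q\}$, not $\{p,p'\}$, and it yields the two-edge property of $P''$ directly rather than ``from (i)''.

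The end of (ii) also needs tightening. Neither Rule~\ref{rr:3chain} nor an auxiliary cycle is needed. Your claim that the auxiliary cycle has at most two consecutive leaf-neighbours is unjustified and need not hold: it may run through the neighbours of $x,y,w,z$ consecutively. Argue directly instead:
\begin{itemize}
\item $P_1$ and $P_2$ are vertex-disjoint.
\item By a degree count, the internal vertices of $P''$ and $P_3$, which are edge-disjoint from $P_1$ and $P_2$, avoid $P_1\cup P_2$. Being internal vertices of entangled paths in a simple network, they are not incident to cut-edges.
\item Each of $P_1$ and $P_2$ contributes at most two vertices of $C$ incident to cut-edges, namely the neighbours of its end leaves.
\item Hence a $3$-chain on $C$ must use an edge of $C$ joining a vertex of $P_1$ to a vertex of $P_2$. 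That edge lies in $P''$ or $P_3$, and since both its ends are in $P_1\cup P_2$, it is the whole path. This contradicts that $P''$ and $P_3$ each have at least two edges.
\end{itemize}
With these two repairs your outline coincides with the paper's proof.
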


 \begin{figure}[t]
 \centering
\scalebox{1.2}{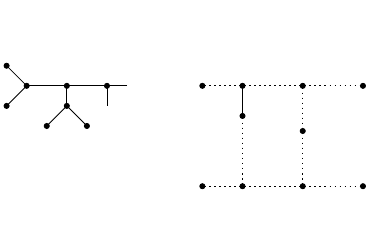}
\caption{Setup as described in the proof of Lemma~\ref{l:4subtree} to establish correctness of Rule~\ref{rr:4subtree}. Note that $u=\emb(p_1)$. Solid (resp. dotted) lines indicate edges (resp. paths). Parts of $T$ and $U$ are omitted for clarity.}
\label{fig:4subtreeExample}
\end{figure}

\begin{proof}
The proof follows a similar strategy as that of Lemma~\ref{l:3subtree}. In $T$, let $p_1$ be the common neighbor of $x$ and $y$, let $p_2$ be the common neighbor or $w$ and $z$, let $q$ be the common neighbor of $p_1$ and $p_2$, and let $r$ be the neighbor of $q$ that is not $p_1$ or $p_2$. Since $T$ has at least six leaves, $r$ is not a leaf. This setup is shown in the left-hand side of Figure~\ref{fig:4subtreeExample}.

First, we show that (i) holds. 
Assume that $(T,U)$ is a yes-instance. Then there exists an embedding $\emb$ of $T$ in $U$. By Lemma~\ref{lem:good2path}, the concatenation of $\emb(\{x,p_1\})$ and $\emb(\{p_1,y\})$ is an entangled path $P$ from $x$ to $y$ in $U$ and, by the same lemma, the concatenation of $\emb(\{w,p_2\})$ and $\emb(\{p_2,z\})$ is an entangled path $P'$ from $w$ to $z$ in $U$. Hence, Rule~\ref{rr:4subtree}(I) does not apply. In addition, since $P$ and $P'$ are unique by Lemma~\ref{lem:goodPathUniqueness}, it follows from the definition of $\emb$ that $P$ and $P'$ are also edge-disjoint. Hence, Rule~\ref{rr:4subtree}(II) does not apply either. Lastly, applying Lemma~\ref{lem:good2path} once more, it follows that the concatenation of $\emb(\{p_1,q\})$ and $\emb(\{q,p_2\})$ is an entangled path $P''$ from the internal vertex $\emb(p_1)$ on $P$ to the internal vertex to $\emb(p_2)$ on $P'$ in $U$. Moreover, since $P''$ is the concatenation of two edge-disjoint paths, $P''$ has at least two edges. This implies that Rule~\ref{rr:4subtree}(III) does not apply either. 
In combination, this establishes (i).

Next, we show that (ii) holds. This part of the proof is illustrated in Figure~\ref{fig:4subtreeExample}, and we use the same terminology as in the statement of Rule~\ref{rr:4subtree}(IV). Since it follows from Lemma~\ref{lem:removeEdge}(ii) that $U$ displays $T$ if $U'$ does, it remains to show that $U'$ displays $T$ if $U$ does. Suppose that $U$ displays $T$. For the purpose of a contradiction, assume that $U'$ does not display $T$. Let $\emb$ be an embedding of $T$ in $U$. By Lemma~\ref{lem:removeEdge}(iii), we may assume that there exists an edge $e'$ in $T$, such that $\emb(e')$ contains $e$. Furthermore,  it follows from Lemmas~\ref{lem:goodPathUniqueness} and~\ref{lem:good2path} that the concatenation of $\emb(\{x,p_1\})$ and $\emb(\{p_1,y\})$ is equal to $P_1$, and that the concatenation of $\emb(\{w,p_2\})$ and $\emb(\{p_2,z\})$ is equal to $P_2$. Hence, all three edges that are incident with $u$ in $U$ are in the image of $\emb$ and, in particular, $\emb(p_1)=u$ and $e'=\{p_1,q\}$. Let $P$ be the concatenation of $\emb(\{p_1,q\})$ and $\emb(\{q,p_2\})$. Clearly, $P$ has at least two edges. Moreover, it follows from Lemma~\ref{lem:good2path} that $P$ is an entangled path from $u$ to $\emb(p_2)$ in $U$ and $\emb(p_2)$ is an internal vertex of $P_2$.  Now consider the subgraph $G$ of $U$ that is induced by the edges of $P$, $P_1$, $P_2$, and $P_3$. Since $\emb(p_2)$ lies on $P_2$ and each element in $\{P,P_1,P_2,P_3\}$ is an entangled path in $U$, it follows that $G$ is connected, contains a cycle $C$, and there is no cut-edge $\{s,t\}$ in $U$ such that $s$ is a vertex of $G$ and $t$ is not a vertex of $G$. Recall that $U$ is simple. Thus,  each vertex of $C$ that is incident to a cut-edge in $U$ is a neighbor of an element in $\{w,x,y,z\}$. Since each of $P$ and $P_3$ has at least two edges and there exists an edge on $P$ (resp. $P_3$) that is not an edge on $P_3$ (resp. $P$), it is now straightforward to check that there exists no path of three vertices $s_1,s_2,s_3$ in $U$ such that each $s_i$ with $i\in\{1,2,3\}$ is incident with a cut-edge,
thereby contradicting that $U$ is $3$-\cuttable. This establishes (ii) and, therefore, the lemma.
\end{proof}

We finish this section with a lemma whose proof is straightforward and omitted. The lemma shows that an unrooted phylogenetic tree $T$ that has at least four leaves always has a certain pendant subtree. It then follows from the lemma that one of Rules~\ref{rr:3chain}--\ref{rr:4subtree} always applies to $T$.

\begin{lemma}\label{l:subtree-guarantee}
Let $T$ be  an unrooted phylogenetic $X$-tree with $|X|\geq 4$. Then $T$ has either a pendant subtree on three leaves $x$, $y$, and $z$ such that $\{x,y\}$ is a cherry or a pendant subtree on four leaves $w$, $x$, $y$, and $z$ such that $\{x,y\}$ and $\{w,z\}$ are cherries.
\end{lemma}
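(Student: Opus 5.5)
The plan is to root $T$ at a suitable place and then take a deepest internal vertex. Since $|X|\ge 4$, $T$ has at least two internal vertices. Pick any leaf $\ell$ and root $T$ at $\ell$ by orienting all edges away from $\ell$. Every internal vertex then has exactly two children, because $T$ is binary. Among the internal vertices other than the neighbour of $\ell$, choose one, say $p$, of maximum distance from $\ell$.

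First I will show that the children of $p$ are leaves. Any internal child of $p$ would lie strictly further from $\ell$, which contradicts the choice of $p$. So the two children of $p$ are leaves $x$ and $y$, and $\{x,y\}$ is a cherry. Let $q$ be the parent of $p$. Since $p$ is not the neighbour of $\ell$, $q\neq \ell$, and $q$ is an internal vertex with a second child $s$ besides $p$.

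Next I split into cases on $s$. By the maximality of the depth of $p$, $s$ has depth at most that of $p$, so $s$ is either a leaf or an internal vertex whose children are leaves.
\begin{enumerate}[(a)]
\item If $s$ is a leaf $z$, then deleting the edge from $q$ to its parent detaches the subtree on $\{x,y,z\}$. This subtree is pendant, and $\{x,y\}$ is a cherry in it.
\item If $s$ is internal, its children are leaves $w$ and $z$. Deleting the edge above $q$ then detaches a pendant subtree with leaf set $\{w,x,y,z\}$ containing the cherries $\{x,y\}$ and $\{w,z\}$.
\end{enumerate}
In both cases the edge above $q$ exists and the detached subtree is a proper subtree, since $q$ is not the root $\ell$. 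This yields the required dichotomy.

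The main obstacle is guaranteeing that a valid $p$ exists, that is, that some internal vertex other than the neighbour $n_\ell$ of $\ell$ exists. Because $|X|\ge 4$, $T$ has $|X|-2\ge 2$ internal vertices, so such a $p$ exists. The remaining point is degenerate configurations, namely $q=n_\ell$. This is harmless: if $q=n_\ell$, the edge $\{\ell,q\}$ is still a genuine edge whose deletion detaches the desired subtree. No use of earlier results in the paper is needed beyond the definitions of cherry and pendant subtree.
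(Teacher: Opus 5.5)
Your proof is correct. The paper omits this proof as straightforward, and your argument fills it in cleanly: root at a leaf, take a deepest internal vertex $p$ other than the neighbour of $\ell$, and inspect its sibling $s$. You also handle the $q=n_\ell$ case (cutting off the subtree via the trivial edge at $\ell$), which matches how the paper uses pendant subtrees when $|X|=4$.

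One small wording fix is needed. What forces the children of an internal $s$ to be leaves is that $s$, being a sibling of $p$, has \emph{exactly} the same depth as $p$. Knowing only that its depth is ``at most'' that of $p$ would not be enough.
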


\subsection{Algorithm} \label{sec:alg}
In this section, we present pseudocode of the algorithm $3$-{\sc CuttableTC}$(T,U)$ that solves an instance $(T,U)$ of {\sc Unrooted Tree Containment} in which $U$ is a $3$-cuttable network, and we show that the algorithm is correct and runs in polynomial time.  Applying the results and reduction rules established in Sections~\ref{sec:branch}--\ref{sec:rules}, the algorithm starts by checking if $(T,U)$ has a conflicting split. If such a split exists, then $U$ does not display $T$ by Observation~\ref{ob:conflictingSplit}.  If no such split exists, the algorithm proceeds by checking if $U$ is simple. If not, then it applies the branching operation to a non-trivial cut-edge of $U$ and recursively calls itself for the two resulting smaller instances. If no such cut-edge exists, it continues by applying one of Rules~\ref{rr:trivial}--\ref{rr:4subtree}. If Rule~\ref{rr:trivial} applies, then the algorithm stops and returns {\sc Yes}. Otherwise, the algorithm applies one of Rules~\ref{rr:3chain}--\ref{rr:4subtree}, which always results in a $3$-cuttable network with fewer edges than $U$, and recursively calls itself for $T$ and the resulting network. Detailed pseudocode for $3$-{\sc CuttableTC}$(T,U)$ is given in Algorithm~\ref{alg:UTC}.

\SetKwComment{Comment}{//}{ }

\begin{algorithm}
\caption{{\sc $3$-CuttableTC}$(T,U)$}\label{alg:UTC}
\KwData{An unrooted phylogenetic $X$-tree $T$ and a $3$-\cuttable network $U$ on $X$.}
\KwResult{\textsc{Yes} if $U$ displays $T$; \textsc{No} otherwise.}
 \If{$(T,U)$ has a conflicting split}
{
    \KwRet{\textsc{No}}\Comment*[r]{Observation~\ref{ob:conflictingSplit}}
}
\If{$U$ has a non-trivial cut-edge $e$}
{
    $(T_1,U_1)$ and $(T_2,U_2) \gets$ apply the branching operation on $e$\;
    \eIf{{\sc $3$-CuttableTC}$(T_1,U_1)$ and {\sc $3$-CuttableTC}$(T_2,U_2)$ return \textsc{Yes}}
    {
        \KwRet{\textsc{Yes}} \Comment*[r]{Lemma~\ref{ob:nonConflictingSplit}}
    }{
        \KwRet{\textsc{No}} \Comment*[r]{Lemma~\ref{ob:nonConflictingSplit}}
    }
}
\If{$|X| \leq 3$}
{
    \KwRet{\textsc{Yes}}\Comment*[r]{Lemma~\ref{l:trivial}}
}
\uIf{Rule~\ref{rr:3chain} applies to $(T,U)$}
{
    $(T,U')\gets$ apply Rule~\ref{rr:3chain} to $(T,U)$\;
}
    \uElseIf{Rule~\ref{rr:3subtree} applies to $(T,U)$}
    {
        $(T,U')\gets$ apply Rule~\ref{rr:3subtree} to $(T,U)$\;
    }
    \ElseIf{Rule~\ref{rr:4subtree} applies to $(T,U)$}
    {
        $(T,U')\gets$ apply Rule~\ref{rr:4subtree} to $(T,U)$\;
    }
\KwRet{{\sc $3$-CuttableTC}$(T,U')$}
\end{algorithm}

\begin{lemma}\label{lem:TCalgorithm}
Let $(T,U)$ be  an instance  of {\sc Unrooted Tree Containment}, where $U$ is a $3$-cuttable network on $X$ and $T$ is an unrooted phylogenetic $X$-tree. The algorithm {\sc $3$-CuttableTC} correctly returns {\sc Yes} or {\sc No} depending on whether $T$ is displayed by $U$ or not, and runs in polynomial time.
\end{lemma}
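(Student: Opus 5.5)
We argue correctness and running time together, by induction on the size of the instance, measured by the number of edges of $U$. Each recursive call is made on strictly smaller networks. The branching operation splits $U$ along a non-trivial cut-edge into $U_1$ and $U_2$, and each has fewer edges than $U$ because the other side of a non-trivial cut-edge contains at least two leaves. Rules~\ref{rr:3chain}--\ref{rr:4subtree} eliminate a non-cut-edge, which removes three edges.

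\textbf{Invariant.} Every recursive call receives a $3$-\cuttable network together with a tree on the same leaf set. For Rules~\ref{rr:3chain}--\ref{rr:4subtree} this is Lemma~\ref{lem:removeEdge}(i). For the branching operation I will argue directly that $U_1$ and $U_2$ are $3$-\cuttable. Every cycle of $U_i$ is a cycle of $U$. Every cut-edge of $U$ that lies in $U_i$ is still a cut-edge of $U_i$. The vertex incident to the removed edge $e$ now carries the new leaf edge, which is again a cut-edge. Hence every $3$-chain of $U$ survives in $U_i$. Finally, $T_i$ is a phylogenetic tree on the same leaf set as $U_i$.

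\textbf{Correctness.} I follow the algorithm line by line. A return of \textsc{No} at the first test is justified by Observation~\ref{ob:conflictingSplit}. If there is no conflicting split, the edge $e'$ of $T$ required by the branching operation exists by (A1) and Lemma~\ref{l:induce-split}. The answer returned after branching is then correct by Lemma~\ref{ob:nonConflictingSplit} together with the induction hypothesis.

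Otherwise $U$ is simple. If $|X|\le 3$, returning \textsc{Yes} is correct by Lemma~\ref{l:trivial}. If $|X|\ge 4$, Lemma~\ref{l:subtree-guarantee} provides a pendant subtree of $T$ with three or four leaves. I then check that one of the three rules applies. Rule~\ref{rr:3chain} is tried first, so when Rule~\ref{rr:3subtree} or Rule~\ref{rr:4subtree} is reached, the hypothesis ``Rule~\ref{rr:3chain} does not apply'' needed by Lemmas~\ref{l:3subtree} and~\ref{l:4subtree} holds.

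\textbf{Main obstacle: the size side conditions.} Rule~\ref{rr:3subtree} requires $|X|>4$, and Rule~\ref{rr:4subtree} requires $|X|>5$. For $|X|=4$, I will use the remark after Lemma~\ref{l:3chain}: a simple $3$-\cuttable network on four leaves is a $4$-cycle with pendant leaves, so Rule~\ref{rr:3chain} applies. For $|X|=5$, the tree has no pendant subtree with four leaves forming two cherries. This is because such a subtree would leave a single remaining leaf, making the attaching edge trivial, so the pendant $3$-leaf subtree exists and Rule~\ref{rr:3subtree} applies. For $|X|\ge 6$, whichever subtree Lemma~\ref{l:subtree-guarantee} provides triggers the corresponding rule. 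Lemmas~\ref{l:3chain}, \ref{l:3subtree} and~\ref{l:4subtree} then show that every \textsc{No} answer is correct and that every reduction preserves the answer. The fallthrough remarks after Rules~\ref{rr:3subtree} and~\ref{rr:4subtree} guarantee that the required edge $e$ exists.

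\textbf{Running time.} Each non-recursive step is polynomial. This covers conflicting-split detection (Observation~\ref{ob:conflictingSplit}), finding entangled paths (Lemma~\ref{lem:goodPathUniqueness}), and checking the rules by enumerating the $O(|X|^3)$ triples or quadruples and candidate vertices. For Rule~\ref{rr:4subtree}(III), the path $P_3$ can be found by searching for entangled paths in the graph with the edges of $P_1$ and $P_2$ deleted.

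For the size of the recursion, branching produces a binary tree of calls whose leaf sets grow by one new leaf per cut. There are at most $O(|E(U)|)$ non-trivial cut-edges, so there are polynomially many subinstances. Along each chain of calls, the rules strictly decrease the number of edges. Hence the total number of calls is bounded by a polynomial, for example $O(|E(U)|^2)$, and the whole algorithm runs in polynomial time.
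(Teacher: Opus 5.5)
Your route is the paper's: correctness comes from chaining Observation~\ref{ob:conflictingSplit} and Lemmas~\ref{ob:nonConflictingSplit}, \ref{l:trivial}, \ref{l:3chain}--\ref{l:4subtree}, and running time from the rules shrinking $U$ plus a bound on branchings. Your explicit check that branching preserves $3$-cuttability is a useful addition. However, the step you singled out as the main obstacle fails at $|X|=4$. A simple $3$-cuttable network on four leaves is indeed a $4$-cycle with one leaf on each vertex. But Rule~\ref{rr:3chain} also needs the two leaves of a cherry of $T$ to hang from \emph{adjacent} cycle vertices $v_2,v_3$.

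Take the cycle with leaves in cyclic order $a,c,b,d$, and let $T$ be the quartet $\{a,b\}|\{c,d\}$. Then:
\begin{itemize}
\item there is no conflicting split, since all splits of $U$ are trivial;
\item there is no non-trivial cut-edge, and $|X|>3$;
\item neither cherry $\{a,b\}$ nor $\{c,d\}$ sits on adjacent cycle vertices, so Rule~\ref{rr:3chain} does not apply;
\item Rules~\ref{rr:3subtree} and~\ref{rr:4subtree} are excluded by their size conditions.
\end{itemize}
So the algorithm as written has no defined step, and such $4$-leaf instances can also arise after branching. The paper's remark after Lemma~\ref{l:3chain}, which its own proof cites, makes the same incorrect claim, so you inherited the gap.

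It is easily repaired: when $|X|=4$ and Rule~\ref{rr:3chain} does not apply, return \textsc{No}. This is correct because deleting one cycle edge shows that a $4$-cycle displays exactly the two quartets whose cherries each lie on adjacent cycle vertices. So if $U$ displays $T$, some cherry $\{x,y\}$ of $T$ hangs from adjacent vertices $v_2,v_3$. Taking $z$ to be the leaf at the other neighbor $v_4$ of $v_3$, Rule~\ref{rr:3chain} applies.

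Your count of branchings also needs a different justification. The rules create new non-trivial cut-edges. For example, after Rule~\ref{rr:3chain} eliminates $\{v_1,v_2\}$, the edge $\{v_3,v_4\}$ becomes a cut-edge inducing $\{x,y\}|X\setminus\{x,y\}$. Hence neither the number of non-trivial cut-edges of the input (possibly zero) nor a per-network bound of $O(|E(U)|)$ bounds the total number of branchings.

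Use a quantity that survives rule applications. The rules never change $T$, and each branching replaces an internal edge $e'$ of the current tree by two pendant edges. So the total number of internal edges over the current trees drops by one per branching, which gives at most $|X|-3$ branchings in all. Alternatively, $|E(U_1)|+|E(U_2)|=|E(U)|+1$, so the sum of $|E|-1$ over the current instances is preserved by branching and decreased by the rules. Combined with your observation that every call strictly decreases the number of edges, this gives $O(|X|\cdot|E(U)|)$ calls. The paper also states its count $s\le |X|-3$ only for the initial network, but its bound via the internal edges of $T$ is the one that extends to the whole recursion.
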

\begin{proof}
It follows from Observation~\ref{ob:conflictingSplit}, and Lemmas~\ref{ob:nonConflictingSplit},~\ref{l:trivial}, and~\ref{l:3chain}--\ref{l:4subtree}  that {\sc $3$-CuttableTC} returns {\sc Yes} if and only if $U$ displays $T$. 

We complete the proof by showing that the algorithm runs in polynomial time. A single call of {\sc $3$-CuttableTC}  returns either  \textsc{Yes} or \textsc{No}, applies the branching operation on a non-trivial cut-edge of $U$ and recursively calls itself twice for the two resulting smaller instances, or applies one of Rules~\ref{rr:3chain}--\ref{rr:4subtree} and recursively calls itself once.
Indeed, if $U$ has a non-trivial cut-edge $e$, then the algorithm either returns \textsc{No} if $(T,U)$ has a conflicting split or applies the branching operation on $e$ if $(T,U)$ has no conflicting split. Now, assume that each cut-edge in $U$ is trivial. First, if $|X|\leq 3$, then the algorithm returns  \textsc{Yes}. Second, if $|X|=4$, then Rule~\ref{rr:3chain} applies (see the paragraph that follows the proof of Lemma~\ref{l:3chain}). Third, if $|X|\geq 5$, then one of Rules~\ref{rr:3chain}--\ref{rr:4subtree} applies due to Lemma~\ref{l:subtree-guarantee}.
Assuming that $U$ has only trivial cut-edges, it follows that the algorithm takes polynomial time because each application of Rules~\ref{rr:3chain}--~\ref{rr:4subtree} reduces the number of edges in $U$ and each such application takes polynomial time due to the facts that entangled paths between two vertices in $U$ are unique and can be found in polynomial time by Lemma~\ref{lem:goodPathUniqueness}. 

We may therefore assume that $U$ has a non-trivial cut-edge. Let $s$ be the number of non-trivial cut-edges in $U$. Since $T$ has $|X|-3$ edges that are not incident with a leaf, it follows from Assumption (A1) and Lemmas~\ref{l:induce-split} and~\ref{l:distinct} that $s\leq |X|-3$. Let $e$ be a non-trivial cut-edge in $U$. Applying the branching operation on $e$ results in two instances $(T_1,U_1)$ and $(T_2,U_2)$.
Moreover the total number of non-trivial cut-edges in $U_1$ and $U_2$ is $s-1$. Thus the recursion
tree that is associated with the recursive calls resulting from applying the branching operations until $U$ is reduced to a series of simple $3$-cuttable networks has exactly $s$ non-leaf vertices. In turn, this implies that the size of that search tree is polynomial in $|X|$. We  conclude that the total running time of {\sc $3$-CuttableTC}$(T,U)$  is polynomial.  
\end{proof}

Since any $q$-cuttable network with $q\geq4$ is in particular $3$-cuttable, Theorem~\ref{thm:3cuttableTC} is now an immediate consequence of Lemma~\ref{lem:TCalgorithm}.

\section{Discussion}

We have shown that it is NP-hard to decide whether a given unrooted binary phylogenetic network can be oriented as a rooted tree-child network. Consequently, the class of tree-child-orientable networks may not form an attractive subclass of unrooted networks since one cannot decide in polynomial time whether a given network is in the class, unless P${}={}$NP.

Therefore, we have introduced the class of $q$-cuttable networks and shown that it satisfies a number of desirable properties, as summarized in the table below.

\begin{center}
{\renewcommand{\arraystretch}{1.2} 

\begin{tabular}{ll}
 \toprule
 \textbf{Feature} & \textbf{\boldmath$q$-cuttable} \\ [0.5ex] 
 \midrule
 Polynomial-time recognizable &  Theorem~\ref{t:recognize}   \\ 
 \textsc{Unrooted Tree Containment} is polynomial-time solvable & Theorem~\ref{thm:3cuttableTC}, for $q\geq3$  \\ 
 Blob-determined & Definition~\ref{def:q-cuttable} \\ 
 Size linear in the number of leaves & Corollary~\ref{c:size}, for $q\geq 2$ \\ 
 Contains networks of any level  & Observation~\ref{ob:all-levels}  \\ 
 \bottomrule
\end{tabular}
}
\end{center}

For this reason, we believe that the class of $q$-cuttable networks could be an attractive class to study further and could possibly play a similar role in the unrooted case as the class of tree-child networks continues to play in the rooted case. Some interesting questions for future research remain. Firstly, are there, in addition to \textsc{Unrooted Tree Containment}, other NP-hard problems that become solvable in polynomial time when restricted to $q$-cuttable networks for some (small) value of~$q$? Secondly, are $q$-cuttable networks, for some~$q$, encoded by certain substructures, such as inter-taxon distances, quartets (embedded $4$-leaf trees) or quarnets (embedded $4$-leaf networks)?

\medskip

\noindent{\bf Acknowledgments.} This material is based upon work supported by the National Science Foundation under Grant No. DMS-1929284 while the authors were in residence at the Institute for Computational and Experimental Research
in Mathematics in Providence, RI, during the {\it Theory, Methods, and Applications of Quantitative Phylogenomics} semester program. The first two authors were supported by the Dutch Research Council (NWO) under Grant~OCENW.M.21.306. The third author was supported by the New Zealand Marsden Fund from Government funding, administered by the Royal Society Te Ap\=arangi New Zealand.  The last author was supported by NSERC Discovery Grant number RGPIN-2025-06235.

\bibliographystyle{abbrvnat}
\bibliography{biblio}

\end{document}